\RequirePackage{fix-cm}

\documentclass[smallextended]{svjour3pre}
\smartqed

\usepackage[T1]{fontenc}
\usepackage{hyperref}

\usepackage{bm}
\usepackage{amsmath,amssymb}
\usepackage{graphicx}
\usepackage{mathptmx}
\usepackage{xcolor}
\usepackage{verbatim}
\usepackage[percent]{overpic}
\usepackage{enumerate}

\newcommand{\NR}{\text{W}}
\newcommand{\tol}{{\rm tol}}

\newcommand{\dd}{\mathrm{d}}
\newcommand{\ee}{{\rm e}}
\newcommand{\ii}{{\rm i}}
\newcommand{\Kry}{\mathcal{K}}
\newcommand{\N}{\mathbb{N}}

\newcommand{\R}{\mathbb{R}}
\newcommand{\C}{\mathbb{C}}
\newcommand{\Ono}{\mathcal O}

\DeclareMathOperator{\real}{Re}
\DeclareMathOperator{\imag}{Im}
\DeclareMathOperator{\avg}{avg}
\DeclareMathOperator{\var}{var}

\numberwithin{equation}{section}
\journalname{TBD}

\hypersetup{pdfinfo={
Title={A study of defect-based error estimates for the Krylov approximation of phi-functions},
Author={T. Jawecki},
Subject={Numerical Analysis},
Keywords={matrix exponential, phi-functions, Krylov approximation, a posteriori error estimation, upper bound}
}}

\begin{document}

\title{A study of defect-based error estimates for the Krylov approximation of $\varphi$-functions}

\author{Tobias Jawecki}
\authorrunning{T.\ Jawecki}
\date{\today}
\institute{
Tobias Jawecki \at
Institut f{\"u}r Analysis und Scientific Computing, Technische Universit{\"a}t Wien \\
Wiedner Hauptstrasse 8--10/E101, A-1040 Wien, Austria \\
\email{tobias.jawecki@tuwien.ac.at}
}

\maketitle


\bigskip

\begin{abstract}
Prior recent work, devoted to the study
of polynomial Krylov techniques for the approximation
of the action of the matrix exponential $\ee^{tA}v$,
is extended to the case of associated $\varphi$-functions
(which occur within the class of exponential integrators).
In particular, a~posteriori error bounds and estimates,
based on the notion
of the defect (residual) of the Krylov approximation are considered.
Computable error bounds and estimates are discussed and analyzed.
This includes a new error bound which favorably compares
to existing error bounds in specific cases.
The accuracy of various error bounds is characterized
in relation to corresponding Ritz values of $A$.
Ritz values yield properties of the spectrum of $A$
(specific properties are known a~priori, e.g. for Hermitian or skew-Hermitian matrices)
in relation to the actual starting vector $v$ and can be computed.
This gives theoretical results together with criteria
to quantify the achieved accuracy on the run.
For other existing error estimates the reliability and performance
is studied by similar techniques.
Effects of finite precision (floating point arithmetic) are also taken into account.
\keywords{
matrix exponential
\and $\varphi$-functions
\and Krylov approximation
\and upper bound
\and a~posteriori error estimation
}
\subclass{15A16, 65F60, 65L70, 65N22}
\end{abstract}

\section{Introduction}

\paragraph{Overview on prior work.}
The matrix exponential and associated $\varphi$-functions
play a crucial role in some numerical methods for
solving systems of differential equations.
In practice this means that the vector
$ \ee^{tA}v ~~\,\text{for a time step $t$,} $
for a given matrix $A$ and a given vector $v$,
representing the time propagation for a linear initial value problem,
is to be approximated.
Similarly, the associated $\varphi$-functions (see~\eqref{eq.defvarphip} below) conform to
solutions of certain inhomogeneous differential equations.
In particular, evaluation of $\varphi$-functions is used in exponential integrators~\cite{HO10}.

If the matrix $A$ is sparse and large, approximation of the action of these matrix functions
in the class of Krylov subspaces is a general and well-established technique.
For the matrix exponential and $\varphi$-functions
this goes back to early works in the field of chemical physics~\cite{NR83,PL86},
parabolic problems~\cite{GS92},
some nonlinear problems~\cite{FTDR89}, etc.
The case of a symmetric or skew-Hermitian matrix~$ A $ is the most prominent one.
Krylov approximations of the matrix exponential
were early studied for the symmetric case in~\cite{DK89,DK91,Sa92},
and together with $\varphi$-functions in a more general setting~\cite{HLS98,HL97}.

Concerning different approaches for the numerical approximation
of the matrix exponential see~\cite{ML03}.
In~\cite{Sa92} it is shown for the symmetric case that the Krylov approximation
is equivalent to interpolation
of the exponential function at associated Ritz values.
This automatically results in a near-best approximation
among other choices of interpolation nodes,
see also~\cite{DK89,SL96} and future works \cite{BR09}
with similar results for the non-symmetric case.
For other polynomial approaches approximating
the matrix exponential we mention
truncated Taylor series~\cite{MH11} (and many works well in advance),
Chebychev polynomial interpolation~\cite{TK84},
or the Leja method~\cite{CKOR16}.

In general, Krylov approximation (or other polynomial approximations)
result in an accurate approximation
if the time step $t$ in $\ee^{tA}v$ is sufficiently small
or the dimension of the Krylov subspace
(i.e., the degree of the approximating matrix polynomial)
is sufficiently large, 
see for instance~\cite{HL97}.
The dimension of the Krylov subspace is limited in practice,
and large time steps require a restart of the iteration generating the Krylov basis.
A larger time step $t$ can be split into smaller substeps
for which the Krylov approximation can be applied in a nested way.
Such a restart strategy in the sense of a time integrator
was already exploited in~\cite{PL86}.
In particular we refer to the EXPOKIT package~\cite{Si98}.
Similar ideas can be applied for the evaluation of $\varphi$-functions~\cite{HLS98,NW12}.

In practice, a~posteriori error estimates are used
to choose a proper Krylov dimension or proper (adaptive) substeps
if the method is restarted as a time integrator.
Different approaches for a~posteriori error estimation
make use of a series expansion for the error given~\cite{Sa92,Si98}
or use a formulation via the defect (also called residual) of the
Krylov approximation~\cite{DGK98,CM97,BGH13}.
Further a~priori as well as a~posteriori error estimates are given
in~\cite{MN01,Lu08,DMR09,BR09,JL15,WY17,JAK19}.
Restarting via substeps
based on different choices of error estimates is further discussed in~\cite{JAK19}.
A restart with substeps together with a strategy to choose
the Krylov dimension in terms of
computational cost was presented in~\cite{NW12,BK19}.
For various other approaches for restarting
(without adapting the time step)
we refer to~\cite{CM97,EE06,Ta07,Ni07,AEEG08,EEG11,BGH13,Schwe15}.

The influence of round-off errors on the construction of the Krylov basis
in floating point arithmetic was early studied for the symmetric case in~\cite{Pa76,Par98}.
The orthogonalization procedure can behave numerically unstable,
typically due to a loss of orthogonality.
Nevertheless, the near-best approximation property and
related a~priori convergence results are not critically affected~\cite{DK91,DGK98}.
Following~\cite{DGK98}, in the symmetric case the defect obtained in floating point arithmetic
results in numerically stable error estimates.

Beside the polynomial Krylov method, further studies are devoted to
the approximation of matrix functions
using so called extended Krylov subspaces~\cite{DK98,KS10,GG13},
rational Krylov subspaces~\cite{MN04,EH06,Gu10},
or polynomial Krylov subspaces with a harmonic Ritz approach~\cite{HH05,Schwe15,WZX16}.

\paragraph{Overview on results presented here.}
%
In Section~\ref{sec.setting} we introduce the problem setting
and recapitulate basic properties of Krylov subspaces.

In Section~\ref{sec.intreperror} we introduce the {\em defect} associated with
Krylov approximations to $\varphi$-functions,
including the exponential function as the basic case.
Our approach for the defect is different from~\cite{WZX16} and is based on
an inhomogeneous differential equation for the approximation error.
This is used in Theorem~\ref{thm:errorint} to obtain
an integral representation of the error,
also taking effects of floating point arithmetic into account.
\footnote{Cf.~\cite{DGK98} for the case of the matrix exponential.}
In contrast to previous works~(\cite{DGK98,JAK19}), this result 
is extended to $\varphi$-functions here.
Theorem~\ref{thm:errorint} also includes an a~priori upper bound
on the error norm based on an integral of the defect norm.

This upper bound is further analyzed in Section~\ref{sec.uppererrorbound}
to obtain computable a~posteriori bounds,
in particular a new a~posteriori bound (Theorem~\ref{thm.upperbounderrorfull}).
We also study the accuracy of our and other existing defect-based bounds~\cite{JAK19}
with respect to spectral properties of the Krylov Hessenberg matrix
(the representation of $A$ in the orthogonal Krylov basis).
To this end we use properties of divided differences including
a new asymptotic expansion for such given in Appendix~\ref{sec.expansionofrho}.
In Subsection~\ref{sec.quadest} we recapitulate error estimates
based on a quadrature estimate of the defect norm integral,
e.g. the generalized residual estimate~\cite{HLS98}.
We also discuss cases for which the defect norm behaves oscillatory
and reliable quadrature estimates may be difficult to obtain.
In Subsection~\ref{sec.luckybreakdown} we
specify a stopping criterion for the
so-called lucky breakdown in floating point arithmetic
which is justified by our a~posteriori error bounds.

In Section~\ref{sec.numexp} we illustrate our results via numerical experiments.
Here we confine ourselves to the exponential function. Particular application problems
where $\varphi$-functions are used will be documented elsewhere.

\section{Problem statement and Krylov approximation}\label{sec.setting}
We discuss the approximation via Krylov techniques
for evaluation of the matrix exponential,
and in particular of the associated $\varphi$-functions,
for a step size $t>0$ and matrix $A\in\C^{n\times n}$
applied to an initial vector $v\in\C^{n}$. Here,
\begin{equation}\label{eq.defexp}
\ee^{tA} v = \sum_{k=0}^{\infty} \frac{(tA)^k}{k!} v.
\end{equation}
The matrix exponential $u(t) = \ee^{tA} v$
is the solution of the differential equation
$$
u'(t)=Au(t),~~~u(0)=v.
$$
The associated $\varphi$-functions are given by
\begin{equation}\label{eq.defvarphip}
\varphi_p(tA)v = \sum_{k=0}^{\infty} \frac{(tA)^k}{(k+p)!} v,~~~ p \in \N_0.
\end{equation}
This includes the case $\varphi_0 = \exp$.
The matrix functions~\eqref{eq.defexp} and~\eqref{eq.defvarphip}
are defined according to their scalar counterparts.
The following definitions of $\varphi_p$ are equivalent to~\eqref{eq.defvarphip}:
For $z\in\C$ we have $ \varphi_0(z) = \ee^z $, and
\begin{equation}\label{defphiint}
\varphi_p(z) = \frac{1}{(p-1)!} \int_0^1 \ee^{(1-\theta) z} \theta^{p-1}\,\dd\theta,
\quad p \in \N.
\end{equation}
(See also~\cite[Subsection 10.7.4]{Hi08}.)
The function $w_p(t)=t^p\varphi_p(t A) v$ ($ p \in \N $)
is the solution of an inhomogeneous differential equation of the form
\begin{equation}\label{eq.odetphip}
w_p'(t) = A w_p(t) + \frac{t^{p-1}}{(p-1)!} v,~~~w_p(0)=0,
\end{equation}
see for instance~\cite{NW12}. This follows from~\eqref{eq.defvarphip},
$$
\frac{\dd}{\dd t}\big(t^p\varphi_p(t A)v\big)
= \frac{\dd}{\dd t}\Big(\sum_{k=0}^{\infty} \frac{t^{k+p}A^kv}{(k+p)!} \Big)
= A \sum_{k=0}^{\infty} \frac{t^{k+p}A^kv}{(k+p)!} +\frac{t^{p-1}v}{(p-1)!}
= A (t^p\varphi_p(t A)v) + \frac{t^{p-1}v}{(p-1)!}.
$$
The $\varphi$-functions appear for instance
in the field of exponential integrators, see for instance~\cite{HO10}.

For the case of $A$ being a large and sparse matrix,
e.g., the spatial discretization of a partial differential operator using a localized basis,
Krylov subspace techniques are commonly used to approximate~\eqref{eq.defvarphip}
in an efficient way.

\paragraph{Notation and properties of Krylov subspaces.}
\footnote{In the sequel, $ e_j $ denotes the $ j $-th unit vector
in $ \C^m $ or $ \C^n $, respectively.}
We briefly recapitulate the usual notation and properties of standard Krylov subspaces,
see for instance~\cite{Sa03}.
For a given matrix $A\in\C^{n\times n} $, a starting vector $v\in\C^n$
and Krylov dimension $0<m\leq n$, the Krylov subspace is given by
$$
\Kry_m(A,v) = \text{span}(v,Av,\ldots,A^{m-1}v).
$$
Let $ V_m \in \C^{n\times m} $ represent the orthonormal basis of $\Kry_m(A,v)$
with respect to the Hermitian inner product,
constructed by the Arnoldi method and  satisfying $V_m^\ast V_m = I_{m\times m}$.
Its first column is given by $V_m^\ast v = \beta e_1 $ with $\beta = \| v\|_2$.
Here, the matrix
$$
H_m = V_m^\ast A V_m \in \C^{m\times m}
$$
is upper Hessenberg.
We further use the notation $h_{m+1,m}=(H_{m+1})_{m+1,m}\in\R$,
and $v_{m+1}\in\C^{n}$ for the $(m+1)$-th column of $V_{m+1}$,
with $V_m^\ast v_{m+1}=0$ and $\|v_{m+1}\|_2=1$.

The Arnoldi decomposition (in exact arithmetic) can be expressed in matrix form,
\begin{equation} \label{eq.Krylovidexact}
AV_m = V_mH_m + h_{m+1,m} v_{m+1} e_m^\ast .
\end{equation}
\begin{remark}\label{rmk:W(A)W(H)}
The numerical range
$\NR(A)=\{y^\ast A y/y^\ast y,\; 0 \not= y \in\C^n\} $
plays a role in our analysis.
Note that $ \NR(H_m) \subseteq \NR(A) $ (see~\eqref{eq.NRHmissubsetAexactar}).
\end{remark}

\begin{remark}\label{rmk:lucky}
The case $(H_m)_{k+1,k}=0$ occurs if $\Kry_k(A,v)$ is an invariant subspace of $A$,
whence the Krylov approximation given in~\eqref{eq.varphi} below is exact.
This exceptional case is referred to as a {\em lucky breakdown.}
In general we assume that no lucky breakdown occurs,
whence the lower subdiagonal entries of $H_m$ are real and positive,
$0<(H_m)_{j+1,j}$ for $j=1,\ldots,m-1$, and $0<h_{m+1,m}\in\R$.
\end{remark}

For the special case of a Hermitian or skew-Hermitian matrix~$ A $
the Arnoldi iteration simplifies to a three-term recurrence,
the so-called Lanczos iteration.
This case will be addressed in Remark~\ref{rmk:skewHermitiancaseLanczos} below.

\paragraph{Krylov subspaces in floating point arithmetic.}
We proceed with some results for the Arnoldi decomposition in computer arithmetic,
assuming complex floating point arithmetic
with a relative machine precision~$\varepsilon$, see also~\cite{Hi02}.
For practical implementation different variants of the Arnoldi procedure exist,
using different ways for the orthogonalization of the Krylov basis.
These are based on classical Gram-Schmidt, modified Gram-Schmidt,
the Householder algorithm, the Givens algorithm,
or variants of Gram-Schmidt with reorthogonalization
(see also \cite[Algorithm 6.1--6.3]{Sa03} and others).
We refer to~\cite{BLR00} and references therein for an overview
on the stability properties of these different variants.

In the sequel the notation $V_m$, $H_m$, etc.,
will again be used for the result of the Arnoldi method in floating point arithmetic.
We now accordingly adapt some statements formulated in the previous paragraph.
By construction, $H_m$ remains to be upper Hessenberg
with positive lower subdiagonal entries.
Assuming floating point arithmetic we use the notation
$U_m\in\C^{n\times m}$ for a perturbation
of the Arnoldi decomposition~\eqref{eq.Krylovidexact}
caused by round-off, i.e.,
\begin{equation}\label{eq.Krylovidcomputer}
AV_m = V_mH_m + h_{m+1,m} v_{m+1} e_m^\ast + U_m.
\end{equation}
An upper norm bound for $U_m$ was first introduced in~\cite{Pa76} for the Lanczos iteration in real arithmetic.
For different variants of the Arnoldi or Lanczos iteration
this is discussed in~\cite{Ze03} and others.
We assume $\|U_m\|_2$ is bounded by a constant $C_1$ which can depend on $m$ and $n$ in a moderate way
and is sufficiently small in a typical setting,
\begin{subequations}\label{eq.Krylovidcomputerroundoffconst}
\begin{equation}\label{eq.Krylovroundoffconst1}
\|U_m\|_2 \leq C_1 \varepsilon \|A\|_2.
\end{equation}
We further assume that the normalization of the columns of $V_m$ is accurate,
in particular that the $(m+1)$-th basis vector $ v_{m+1} $
is normalized correctly up round-off with a sufficiently small constant $C_2$
(see e.g.~\cite[eq.\,(14)]{Pa76}),
\begin{equation}\label{eq.Krylovroundoffconst2}
|\|v_{m+1}\|_2-1|\leq C_2 \varepsilon.
\end{equation}
Concerning $V_{m+1}$ which represents an orthogonal basis in exact arithmetic,
numerical loss of orthogonality has been well-studied.
Loss of orthogonality can be significant (see for instance \cite{Par98,BLR00} and others), depending on the starting vector $ v $.
Reorthogonalization schemes or orthogonalization via Householder or Givens algorithm
can be used to obtain orthogonality of $V_{m+1}$ on a sufficiently accurate level.

The numerical range of $H_m$ obtained in floating point arithmetic
(see~\eqref{eq.Krylovidcomputer}) can be characterized as
\begin{equation}\label{eq.Krylovnumrangecomputer}
\NR(H_m) \subseteq U_{C_3\varepsilon}(\NR(A)),
\end{equation}
\end{subequations}
with $U_{C_3\varepsilon}(\NR(A))$ being the neighborhood of $\NR(A)$ in $\C$ with a distance $C_3\varepsilon$.
With the assumption that $V_{m+1}$ is sufficiently close to orthogonal
(e.g., semiorthogonal~\cite{Si84}),
the constant $C_3$ in~\eqref{eq.Krylovnumrangecomputer}
(which also depends on $C_1$ and problem sizes) can be shown to be moderate-sized.
Further details on this aspect are given in Appendix~\ref{sec.appendixAKrylov}.

\paragraph{Krylov approximation of $\varphi$-functions.}

\footnote{Remark concerning notation: '$u$' objects live in $ \C^n $,
          and '$y$' objects live in $ \C^m $.}
Let $V_m\in\C^{n\times m}$, $H_m\in\C^{m\times m}$ and $\beta\in\R$
be the result of the Arnoldi method in floating point arithmetic for $\Kry_m(A,v)$ as described above.
For a time-step $0<t\in\R$ and $p \geq 0$ the vector $ \varphi_p(tA)v $
can be approximated in the Krylov subspace $\Kry_m(A,v)$
by the {\em Krylov propagator}
\begin{subequations}
\begin{equation}\label{eq.Krylovprpagator}
u_{p,m}(t):= V_m \varphi_p(tV_m^\ast AV_m)V_m^\ast v = \beta V_m \varphi_p(tH_m) e_1 ,~~~p\in\N.
\end{equation}
The special case $p=0$ reads
\begin{equation}\label{eq.Krylovprpagatorexp}
u_{0,m}(t) = \beta V_m \ee^{tH_m} e_1.
\end{equation}
\end{subequations}
We remark that the small-dimensional problem $\varphi_p(tH_m)e_1\in\C^m$,
typically with $m\ll n$, can be evaluated cheaply by standard methods.
In the sequel we denote
\begin{equation}\label{eq.varphi}
y_{p,m}(t)=\beta \varphi_p(tH_m) e_1\in\C^m,
\quad \text{i.e.,} \quad
u_{p,m}(t) = V_m y_{p,m}(t).
\end{equation}
For $p=0$ the small dimensional problem $y_{0,m}(t) = \beta \ee^{t H_m} e_1$ solves the differential equation
\begin{equation}\label{eq.odeypzero}
{y}'_{0,m}(t) = H_m {y}_{0,m}(t),~~~{y}_{0,m}(0)=\beta e_1,
\end{equation}
For later use we introduce the notation
\begin{subequations}
\begin{equation}\label{eq.yhatistpyp}
\widehat{y}_{p,m}(t) = t^p y_{p,m}(t),
\end{equation}
which for $p\in\N$ and according to~\eqref{eq.odetphip} satisfies the differential equation
\begin{equation}\label{eq.odetphipy}
\widehat{y}'_{p,m}(t) = H_m \widehat{y}_{p,m}(t) + \tfrac{t^{p-1}}{(p-1)!} \beta e_1,~~~\widehat{y}_{p,m}(0)=0,
\end{equation}
\end{subequations}

\begin{remark}\label{rmk:roundingeffects}
Although we take rounding effects in the Arnoldi decomposition into account,
we do not give a full study of round-off errors at this point.
Round-off errors in substeps such as the evaluation of
$y_{p,m}(t)$
or the matrix-vector multiplication $V_m y_{p,m}(t)$ will be ignored.
We refer to~\cite{Hi02} for a more general study of these effects.
\end{remark}

\begin{remark}\label{rmk:skewHermitiancaseLanczos}
In the special cases $A=B$ or $A=\ii B$ for a Hermitian matrix $B\in\C^{n\times n}$
(with $A$ being skew-Hermitian in the latter case)
the orthogonalization of the Krylov basis of $\Kry_m(B,v)$ simplifies to
a three-term recursion, the so-called Lanczos method.
In the skew-Hermitian case ($A=\ii B$) the Krylov propagator~\eqref{eq.Krylovprpagator}
can be evaluated by $\beta V_m \varphi_p(\ii tH_m) e_1$, i.e.,
we approximate the function $\lambda\mapsto \varphi_p(\ii t \lambda)$ in the Krylov subspace $\Kry_m(B,v)$.
The advantage is a cheaper computation of the Krylov subspace
in terms of computational cost
and better conservation of geometric properties.
For details we refer to the notation $\ee^{\sigma t B}$ as introduced in~\cite{JAK19},
with $\sigma=\pm\ii$ and a Hermitian matrix $B$ for the skew-Hermitian case.
\end{remark}

\paragraph{The error of the Krylov propagator.}

We denote the error of the Krylov propagator given in~\eqref{eq.varphi} by
\begin{equation}\label{def.errorpm}
l_{p,m}(t)
= \beta V_m\varphi_p(tH_m)e_1 - \varphi_p(tA)v,~~~p\in\N_0.
\end{equation}
We are further interested in computable a~posteriori estimates for the error norm $\zeta_{p,m}(t) \approx \| l_{p,m}(t) \|_2$,
which in the best case can be proven to be upper bounds on the error norm $ \| l_{p,m}(t) \|_2 \leq \zeta_{p,m}(t) $.
Norm estimates of the error~\eqref{def.errorpm} can be used in practice to stop the Krylov iteration after $k$ steps if $\|l_{p,k}(t)\|_2$
satisfies~\eqref{def.erroresttol} below,
or to restrict the time-step $t$ to obtain an accurate approximation and restart the method with the remaining time.
For details on the total error with this restarting approach see also~\cite{Si98,JAK19}.

A prominent task is to test if the error norm per unit step is bounded by a tolerance $\tol$,
\begin{equation}\label{def.erroresttol}
\zeta_{p,m}(t) \leq t \cdot \tol,~~~\text{which should entail}~~~\|l_{p,m}(t)\|_2 \leq t \cdot \tol.
\end{equation}
In case of $\zeta_{p,m}(t)$ being an upper bound on the error norm,
this results in a reliable bound on the error norm~\eqref{def.erroresttol}.

\section{An integral representation for the error of the Krylov propagator}\label{sec.intreperror}

We proceed with discussing the error $ l_{p,m} $ of the Krlyov propagator.
To this end we first define its scalar \textit{defect} by
\begin{subequations}
\begin{equation}\label{eq.defpm}
\delta_{p,m}(t) =
\beta e_m^\ast t^p \varphi_p(t H_m) e_1 = t^p \big(y_{p,m}(t)\big)_m\in\C,
\end{equation}
and the \textit{defect integral} by\footnote{
This and the result of Theorem~\ref{thm:errorint} remain valid for the case $t=0$.}
\begin{equation}\label{eq.defint}
L_{p,m}(t) = \frac{h_{m+1,m}}{t^p}\int_0^t |\delta_{p,m}(s)| \,\dd s\in\R.
\end{equation}
\end{subequations}
%
%
%
\begin{theorem}\label{thm:errorint}
Let $\delta_{p,m}(t)\in\C$ be the defect defined in~\eqref{eq.defpm}.
For $y_{p,m}(t)\in\C^m$ defined in~\eqref{eq.varphi}
and a numerical perturbation $U_m\in\C^{n\times m}$
of the Arnoldi decomposition (see~\eqref{eq.Krylovidcomputer}), we have:
\begin{enumerate}[(a)]
\item
The error $l_{p,m}(t)$ of the Krylov propagator (see~\eqref{def.errorpm})
enjoys the integral representation
\begin{subequations}
\begin{equation}\label{eq.thm1deriv01-p}
l_{p,m}(t) = -\frac{h_{m+1,m}}{t^p}\int_0^t \ee^{(t-s)A}v_{m+1} \delta_{p,m}(s) \,\dd s
- \frac{1}{t^p}\int_0^t \ee^{(t-s)A} U_m s^p y_{p,m}(s) \,\dd s.
\end{equation}
\item
For given machine precision $\varepsilon$ and
constants $C_1$, $C_2$ representing round-off effects
(see~\eqref{eq.Krylovroundoffconst1},\eqref{eq.Krylovroundoffconst2}),
and with $\kappa_1 = \max_{s\in[0,t]}\|\ee^{sA}\|_2 $ and $\kappa_2 = \max_{s\in[0,t]}\|\ee^{sH_m}\|_2 $
the error norm is bounded by
\begin{equation}\label{eq.errnormboundint}
\|l_{p,m}(t)\|_2 \leq (1+C_2 \varepsilon )  \kappa_1 L_{p,m}(t) + C_1\varepsilon \|A\|_2 \frac{\beta \kappa_1\kappa_2 t}{(p+1)!},
\end{equation}
with the defect integral $L_{p,m}(t)$ defined in~\eqref{eq.defint}.
\end{subequations}
\end{enumerate}
\end{theorem}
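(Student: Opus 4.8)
The plan is to derive an inhomogeneous linear ODE satisfied by the (scaled) error and then apply Duhamel's variation-of-constants formula. First I would work with the scaled quantities $\widehat{y}_{p,m}(t) = t^p y_{p,m}(t)$ from~\eqref{eq.yhatistpyp} and $w_p(t) = t^p\varphi_p(tA)v$, and introduce the scaled error $\widehat{l}_{p,m}(t) = V_m\widehat{y}_{p,m}(t) - w_p(t) = t^p l_{p,m}(t)$, which vanishes at $t=0$. Differentiating $V_m\widehat{y}_{p,m}(t)$ and inserting the ODE~\eqref{eq.odetphipy} for $\widehat{y}_{p,m}$ produces $V_m H_m\widehat{y}_{p,m}(t)$ plus the inhomogeneity $\tfrac{t^{p-1}}{(p-1)!}\beta V_m e_1$. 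The key algebraic step is to replace $V_m H_m$ using the perturbed Arnoldi relation~\eqref{eq.Krylovidcomputer}, i.e. $V_m H_m = AV_m - h_{m+1,m}v_{m+1}e_m^\ast - U_m$, and to note that $\beta V_m e_1 = v$, so that the inhomogeneity of $V_m\widehat{y}_{p,m}$ matches that of $w_p$ in~\eqref{eq.odetphip}.

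Subtracting the two ODEs, the inhomogeneous terms cancel and I obtain
\begin{equation*}
\widehat{l}'_{p,m}(t) = A\,\widehat{l}_{p,m}(t) - h_{m+1,m}v_{m+1}\,\delta_{p,m}(t) - U_m\,\widehat{y}_{p,m}(t),\qquad \widehat{l}_{p,m}(0)=0,
\end{equation*}
where I have used $e_m^\ast\widehat{y}_{p,m}(t) = \delta_{p,m}(t)$ from~\eqref{eq.defpm}. Applying Duhamel's formula to this linear ODE and dividing by $t^p$ (using $\widehat{y}_{p,m}(s)=s^p y_{p,m}(s)$) yields precisely the integral representation~\eqref{eq.thm1deriv01-p}. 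The case $p=0$ is analogous but starts from the homogeneous ODE~\eqref{eq.odeypzero} for $y_{0,m}$ (with initial value $\beta e_1$) and the matching relation $u'=Au$; here $t^p\equiv 1$, so no rescaling is needed and the same cancellation of the $A u$ terms produces the $p=0$ instance of~\eqref{eq.thm1deriv01-p}.

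For part (b), I would take the Euclidean norm under the integral signs in~\eqref{eq.thm1deriv01-p}. Since $t-s\in[0,t]$, each propagator factor satisfies $\|\ee^{(t-s)A}\|_2 \leq \kappa_1$; combining this with $\|v_{m+1}\|_2\leq 1+C_2\varepsilon$ from~\eqref{eq.Krylovroundoffconst2} turns the first integral into $(1+C_2\varepsilon)\kappa_1$ times the defect integral $L_{p,m}(t)$ of~\eqref{eq.defint}. For the second integral I would use $\|U_m\|_2\leq C_1\varepsilon\|A\|_2$ from~\eqref{eq.Krylovroundoffconst1}, which leaves the factor $\tfrac{1}{t^p}\int_0^t s^p\|y_{p,m}(s)\|_2\,\dd s$ to estimate.

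The main obstacle, and the step requiring care, is the clean bound on $\|y_{p,m}(s)\|_2 = \beta\|\varphi_p(sH_m)e_1\|_2$. For $p\in\N$ I would invoke the integral representation~\eqref{defphiint} for $\varphi_p(sH_m)$, bound the inner exponential by $\|\ee^{(1-\theta)sH_m}\|_2\leq\kappa_2$ (valid since $(1-\theta)s\in[0,t]$), and carry out the resulting $\theta$-integral to get $\|y_{p,m}(s)\|_2\leq \beta\kappa_2/p!$; the case $p=0$ gives the same bound directly from the definition of $\kappa_2$. Inserting this and evaluating $\tfrac{1}{t^p}\int_0^t s^p\,\dd s = t/(p+1)$ collapses the second integral into $C_1\varepsilon\|A\|_2\,\beta\kappa_1\kappa_2 t/(p+1)!$, which together with the first term establishes~\eqref{eq.errnormboundint}.
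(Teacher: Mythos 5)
Your proposal is correct and follows essentially the same route as the paper: derive the inhomogeneous ODE for the scaled error $\widehat{l}_{p,m}=t^p l_{p,m}$ via the perturbed Arnoldi relation, apply Duhamel's formula, and in part (b) bound $\|\varphi_p(sH_m)e_1\|_2\leq\kappa_2/p!$ through the integral representation~\eqref{defphiint} before integrating $s^p$. All the key steps (cancellation of the inhomogeneity, the identification $e_m^\ast\widehat{y}_{p,m}=\delta_{p,m}$, and the final constant $1/(p+1)!$) match the paper's argument.
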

\begin{proof}
$~$
\begin{enumerate}[(a)]
\item
For the exact matrix function we use the notation
$$
u_p(t)=\varphi_p(tA)v, \quad \text{and} \quad  w_p(t)=t^p u_p(t).
$$
For the Krylov propagator we denote
$$
u_{p,m}(t)=V_m y_{p,m}(t) ~~~\text{with}~~ y_{p,m}(t)=\beta \varphi_p(tH_m)e_1
$$
(see~\eqref{eq.varphi}),
and we also define
$$w_{p,m}(t) = t^p u_{p,m}(t) = V_m \widehat{y}_{p,m}(t),
~~~\text{with}~~ \widehat{y}_{p,m}(t)=t^p{y}_{p,m}(t)~\;\text{defined in~\eqref{eq.yhatistpyp}.}
$$
\begin{itemize}
\item For $ p \in \N $, the functions
$w_{p}(t)$ and $w_{p,m}(t)$ satisfy the differential equations
(see~\eqref{eq.odetphip}, \eqref{eq.odetphipy})
\begin{equation}\label{eq.thm1deriv01}
\begin{aligned}
&w'_{p,m}(t)  = V_m \widehat{y}'_{p,m}(t)  = V_m\big( H_m \widehat{y}_{p,m}(t) +  \tfrac{t^{p-1}}{(p-1)!} \beta e_1\big),\\
&w'_{p}(t) = A w_{p}(t) + \tfrac{t^{p-1}}{(p-1)!} v, \quad
\text{and}~~~{w}_{p}(0)={w}_{p,m}(0)=0. 
\end{aligned}
\end{equation}
\item For $p=0$, i.e., $w_0(t)=u_0(t)$ and $w_{0,m}(t)=V_my_{0,m}(t)$,
      according to~\eqref{eq.odeypzero} we have
$$
\begin{aligned}
&w'_{0}(t) = A w_{0}(t), \quad
w'_{0,m}(t)  =  V_m H_m {y}_{0,m}(t), \\
&\text{and}~~~w_{0}(0)=v,~~~w_{0,m}(0)=\beta V_m e_1 =v.
\end{aligned}
$$
\end{itemize}
\paragraph{Local error representation in terms of the defect.}
We defined the scaled error
$$
\widehat{l}_{p,m}(t) = w_{p,m}(t) - w_p(t) = t^p l_{p,m}(t).
$$
\begin{itemize}
\item For $ p \in \N $ the scaled error satisfies 
\begin{equation}\label{eq.odeerror}
\widehat{l}'_{p,m}(t)
= w'_{p,m}(t) - w'_p(t)
= A\,\widehat{l}_{p,m}(t) + d_{p,m}(t),~~~\widehat{l}_{p,m}(0)=0,
\end{equation}
with the {\em defect} of $ w_{p,m}(t) $ with respect to the differential equation~\eqref{eq.thm1deriv01},
\begin{align*}
d_{p,m}(t)
&= w'_{p,m}(t) - A w_{p,m}(t)  - \tfrac{t^{p-1}}{(p-1)!}  \\
&= V_m\big( H_m \widehat{y}_{p,m}(t) +  \tfrac{t^{p-1}}{(p-1)!} \beta e_1\big) - A V_m \widehat{y}_{p,m}(t) - \tfrac{t^{p-1}}{(p-1)!} v \\
&= \big( V_m H_m - A V_m \big) \widehat{y}_{p,m}(t) + \tfrac{t^{p-1}}{(p-1)!}(\beta V_m e_1 - v).
\end{align*}
Together with~\eqref{eq.Krylovidcomputer} and using of $\beta V_me_1=v$ the defect can be written as
\begin{equation*} 
d_{p,m}(t) = - h_{m+1,m} (e_m^\ast \widehat{y}_{p,m}(t)) v_{m+1} - U_m \widehat{y}_{p,m}(t). 
\end{equation*}
\item For $p=0$, in an analogous way we obtain
\begin{equation*} 
d_{0,m}(t) = - h_{m+1,m} (e_m^\ast {y}_{0,m}(t))  v_{m+1} - U_m {y}_{0,m}(t).
\end{equation*}
\end{itemize}
We conclude
\begin{equation}\label{eq.defvecpall}
d_{p,m}(t) = -h_{m+1,m} \delta_{p,m}(t) v_{m+1} -  t^p U_m y_{p,m}(t),~~~p\in\N_0,
\end{equation}
with the scalar defect defined in~\eqref{eq.defpm}.
Due to~\eqref{eq.odeerror} we have
\begin{equation*}
\widehat{l}_{p,m}(t) = \int_0^t \ee^{(t-s) A} d_{p,m}(s) \dd s,~~~p\in\N_0,
\end{equation*}
and for ${l}_{p,m}(t) = t^{-p} \widehat{l}_{p,m}(t)$ together with~\eqref{eq.defvecpall}
this implies~\eqref{eq.thm1deriv01-p}.
\item
With $\kappa_1 =\max_{t\in[0,t]} \|\ee^{tA}\|_2$,
$\|U_m\|_2\leq C_1\varepsilon \|A\|_2$ and $\|v_{m+1}\|_2\leq 1+C_2\varepsilon$,
the representation~\eqref{eq.thm1deriv01-p} implies the upper bound
\begin{equation}\label{eq.interrestnormbound1}
\|l_{p,m}(t)\|_2 \leq (1+C_2\varepsilon) \kappa_1 \frac{ h_{m+1,m} }{t^p} \int_0^t |\delta_{p,m}(s)|  \, \dd s
+ C_1\varepsilon \|A\|_2 \frac{\kappa_1}{t^p}   \int_0^t s^p \| y_{p,m}(s)\|_2 \, \dd s .
\end{equation}
With the defect integral $L_{p,m}(t)$ defined in~\eqref{eq.defint}
we obtain the first term in~\eqref{eq.errnormboundint}.
For the second integral term (with $ y_{p,m}(t) = \beta \varphi_p(tH_m)e_1 $)
we use the upper bound
\begin{equation}\label{eq.interrestnormbound2}
\int_0^t s^p \| \varphi_p(sH_m)e_1\|_2 \, \dd s \leq  \max_{s\in[0,t]}\| \varphi_p(sH_m)e_1\|_2 \frac{t^{p+1}}{p+1}.
\end{equation}
\begin{itemize}
\item
For $p \in \N$ we apply the integral representation due to~\eqref{defphiint} for $\varphi_p(t H_m) e_1$
to obtain the norm bound
\begin{equation}\label{eq.interrestnormbound3}
\max_{s\in[0,t]} \|\varphi_p(s H_m) e_1\|_2
\leq \frac{\max_{s\in[0,t]} \|\ee^{s H_m}\|_2}{(p-1)!} \int_0^1\theta^{p-1}\,\dd\theta
= \frac{\max_{s\in[0,t]} \|\ee^{s H_m}\|_2 }{p!}.
\end{equation}
\item
For $p=0$ we obtain~\eqref{eq.interrestnormbound3} in a direct way.
\end{itemize}
Combining~\eqref{eq.interrestnormbound2} with~\eqref{eq.interrestnormbound3}
and denoting $\kappa_2 = \max_{s\in[0,t]}\|\ee^{sH_m}\|_2$ we obtain
\begin{equation*}
\frac{\kappa_1}{t^p}   \int_0^t s^p \| y_{p,m}(s)\|_2 \, \dd s  \leq \frac{\beta \kappa_1\kappa_2 t}{(p+1)!}.
\end{equation*}
Combining these estimates with~\eqref{eq.interrestnormbound1}
we conclude~\eqref{eq.errnormboundint}. \qed
\end{enumerate}
\end{proof}

\begin{remark}
The error norm of the Krylov propagator scales with $\kappa_1 = \max_{s\in[0,t]}\|\ee^{sA}\|_2 $ and $\kappa_2 = \max_{s\in[0,t]}\|\ee^{sH_m}\|_2 $
in a natural way.
\footnote{Taking the maximum $\max_{s\in[0,t]}$ in the definition
of $\kappa_1$ and $\kappa_2$ is necessary to cover the case $p>0$.
For the special case $p=0$ the upper norm bound
given in Theorem~\ref{thm:errorint} can be adapted
to scale with $\ee^{t\mu_2(A)}$}.
It is well known that
$$
\|\ee^{tA}\|_2 \leq \ee^{t \mu_2(A)}~~\text{with the logarithmic norm}~\,
\mu_2(A) = \max\{\real(\NR(A))\} = \max \{ \text{spec}(A+A^\ast)/2\},
$$
see for instance~\cite[Theorem 10.11]{Hi08}.
Problems with $\mu_2(A)>0$ can be arbitrary ill-conditioned and difficult to solve with proper accuracy.
(For further results on the stability of the matrix exponential see also~\cite{ML03,Lo77}.)
We will not further discuss problems with $\mu_2(A)>0$ and assume $\mu_2(A)\leq 0$.
We refer to the case $\mu_2(A) \leq 0$ as the dissipative case, with $\kappa_1=1$.
\end{remark}

For the dissipative case with $\mu_2(A)\leq 0$
the error bound~\eqref{eq.errnormboundint} from Theorem~\ref{thm:errorint} reads
\begin{equation}\label{eq.definterrorfull}
\|l_{p,m}(t)\|_2 \leq (1+C_2 \varepsilon ) L_{p,m}(t)
+ C_1\varepsilon \|A\|_2 \frac{\beta \kappa_2 t}{(p+1)!}.
\end{equation}
The dissipative behavior of $\ee^{tA}$ carries over to the Krylov propagator
up to a perturbation which depends on round-off errors, including the loss of orthogonality of $V_m$.
In terms of the numerical range $\NR(H_m)$,
with $\NR(H_m)\subseteq U_{C_3\varepsilon}(\NR(A))$ we have $\mu_2(H_m)\leq \mu_2(A) + C_3\varepsilon$,
for a constant $C_3\varepsilon$ depending on round-off effects~\eqref{eq.Krylovnumrangecomputer}.
Thus, $\mu_2(H_m) \leq C_3 \varepsilon$ and $\kappa_2 \leq \ee^{t C_3 \varepsilon}$.

Our aim is to construct an upper norm bound for the error per unit step~\eqref{def.erroresttol}
via~\eqref{eq.definterrorfull}.
Let the tolerance $\tol$ be given and $t$ be a respective time step for~\eqref{def.erroresttol}.
Then the round-off error terms in~\eqref{eq.definterrorfull} are negligible if
\begin{equation}\label{eq.definterrorcondition}
C_2 \varepsilon\ll 1,~~~\text{and}~~ C_1 \varepsilon \|A\|_2 \beta \ee^{t C_3 \varepsilon} /(p+1)! \ll \tol.
\end{equation}
Concerning the constants $C_1$, $C_2$ and $C_3$ see~\eqref{eq.Krylovidcomputerroundoffconst}.
We recapitulate that $C_1$ and $C_2$ given in~\eqref{eq.Krylovroundoffconst1} and~\eqref{eq.Krylovroundoffconst2}
can be considered to be small enough in a standard Krylov setting.
The constant $C_3$ can be larger in the case of a loss of orthogonality
of the Krylov subspace, which can however be avoided at the cost of additional computational effort.
The constant $C_3$ only appears as an exponential prefactor
for the round-off term in~\eqref{eq.definterrorcondition}
and is less critical compared to $C_1$ and $C_2$.

With the previous observation on the round-off errors taken into account in~\eqref{eq.definterrorfull}
we consider the following upper bound to be stable in computer arithmetic
in accordance to a proper value of $\tol$, see~\eqref{eq.definterrorcondition}.

\begin{corollary}\label{thm:errorintexactarithmetic}
For the case $\mu_2(A)\leq 0$ and
with the assumption that round-off error is negligible,
the error of the Krylov propagator
is bounded by the defect integral $L_{p,m}(t)$,
$$
 \|l_{p,m}(t)\|_2 \leq \frac{h_{m+1,m}}{t^p}\int_0^t |\delta_{p,m}(s)| \,\dd s = L_{p,m}(t),~~~p\in\N_0.
$$
\end{corollary}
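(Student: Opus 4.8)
The plan is to read the corollary off directly from part~(b) of Theorem~\ref{thm:errorint}, specialized to the dissipative regime and with the round-off contributions discarded. The starting point is the general error-norm estimate~\eqref{eq.errnormboundint}, which already splits into a term proportional to the defect integral $L_{p,m}(t)$ and a term proportional to $C_1\varepsilon\|A\|_2$ that collects the effect of the Arnoldi perturbation $U_m$. Since both $\kappa_1$ and the additive perturbation term are exactly what the dissipativity and negligible-round-off hypotheses are designed to control, no further analytic work is needed beyond the theorem.

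First I would invoke the assumption $\mu_2(A)\leq 0$. By the logarithmic-norm inequality $\|\ee^{sA}\|_2\leq \ee^{s\mu_2(A)}$ recalled in the excerpt, every factor $\ee^{sA}$ with $s\in[0,t]$ has norm at most one, so that $\kappa_1 = \max_{s\in[0,t]}\|\ee^{sA}\|_2 = 1$. Substituting $\kappa_1=1$ into~\eqref{eq.errnormboundint} collapses it to the dissipative bound~\eqref{eq.definterrorfull}, in which the prefactor of $L_{p,m}(t)$ reduces to $(1+C_2\varepsilon)$ and the only surviving round-off contribution is the additive term $C_1\varepsilon\|A\|_2\,\beta\kappa_2 t/(p+1)!$.

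Next I would use the hypothesis that round-off is negligible, as made quantitative in~\eqref{eq.definterrorcondition}: the factor $C_2\varepsilon$ multiplying $L_{p,m}(t)$ is dropped because $C_2\varepsilon\ll 1$, and the additive term carrying $C_1\varepsilon\|A\|_2$ is negligible against the tolerance, using $\kappa_2\leq\ee^{tC_3\varepsilon}$. What remains is exactly
\[
\|l_{p,m}(t)\|_2 \leq L_{p,m}(t) = \frac{h_{m+1,m}}{t^p}\int_0^t |\delta_{p,m}(s)|\,\dd s,
\]
valid for all $p\in\N_0$, as asserted.

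There is no genuine obstacle here, since the integral representation of the error and the resulting norm estimate have already been established in Theorem~\ref{thm:errorint}. The only point demanding care is the reading of the phrase \emph{round-off negligible}: it should be understood in the precise sense of~\eqref{eq.definterrorcondition}, so that the inequality is a statement valid up to terms that are small relative to the prescribed tolerance $\tol$, rather than a strict inequality in exact finite-precision arithmetic (where the factor $1+C_2\varepsilon$ would have to be retained).
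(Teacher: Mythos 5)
Your proposal is correct and follows exactly the paper's own route: the corollary is obtained from Theorem~\ref{thm:errorint}(b) by setting $\kappa_1=1$ via the logarithmic-norm bound under $\mu_2(A)\leq 0$ (giving~\eqref{eq.definterrorfull}) and then discarding the round-off terms under the negligibility condition~\eqref{eq.definterrorcondition}. Your closing remark on the precise meaning of ``round-off negligible'' matches the paper's own caveat preceding the corollary.
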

Note that the defect norm $|\delta_{p,m}(s)|$ cannot be integrated exactly
in general. This point will further be studied in the sequel.

\paragraph{Representing the defect in terms of divided differences.}
Divided differences play an essential role in this work.
We use the notation
$$
f[\lambda_1,\ldots,\lambda_m]
$$
for the divided differences of a function $f$
over the nodes $\lambda_1,\ldots,\lambda_m$.
(This is to be understood in the confluent sense
for the case of multiple nodes $ \lambda_j $, see for instance~\cite[Section B.16]{Hi08}.)

%
%
%

\begin{theorem}[see for instance~\cite{CM97}]\label{thm.matfcttodd}
Let $H_m\in\C^{m\times m}$ be an upper Hessenberg matrix with positive secondary diagonal entries,
$0<(H_m)_{j+1,j}\in\R$ for $j=1,\ldots,m-1$, and eigenvalues $\lambda_1,\ldots,\lambda_m$.
Let $f$ be an analytic function for which $f(H_m)$ is well defined.
Then,
$$
e_m^\ast f(H_m)e_1 = \gamma_m f[\lambda_1,\ldots,\lambda_m],
$$
with $\gamma_m=\prod_{j=1}^{m-1} (H_m)_{j+1,j}$.
\end{theorem}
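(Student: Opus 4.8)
The plan is to write $f(H_m)$ as a polynomial in $H_m$ using the Newton (divided-difference) form of the Hermite interpolant of $f$, and then to show that left-multiplication by $e_m^\ast$ and right-multiplication by $e_1$ annihilates every term in this expansion except the leading one, whose contribution is exactly $\gamma_m f[\lambda_1,\ldots,\lambda_m]$.

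The structural heart of the argument is a lemma about the action of an upper Hessenberg matrix on $e_1$: for $0\le k\le m-1$ the vector $H_m^k e_1$ lies in $\operatorname{span}(e_1,\ldots,e_{k+1})$, and its component along $e_{k+1}$ equals $\prod_{j=1}^{k}(H_m)_{j+1,j}$. I would prove this by induction on $k$, observing that applying $H_m$ to any vector supported on $e_1,\ldots,e_i$ yields a vector supported on $e_1,\ldots,e_{i+1}$ whose new top component arises solely through the subdiagonal entry $(H_m)_{i+1,i}$. Since the subdiagonal entries are positive, this lemma also shows that $e_1,H_me_1,\ldots,H_m^{m-1}e_1$ are linearly independent, so $e_1$ is a cyclic vector and $H_m$ is non-derogatory (its minimal polynomial equals its characteristic polynomial). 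Consequently, because $f$ is analytic on a neighborhood of $\operatorname{spec}(H_m)$, the matrix function coincides with the Hermite interpolation polynomial $p$ of $f$ at $\lambda_1,\ldots,\lambda_m$ (counted with multiplicity), which in Newton form reads
$$f(H_m)=p(H_m)=\sum_{k=0}^{m-1} f[\lambda_1,\ldots,\lambda_{k+1}]\,\pi_k(H_m),\qquad \pi_k(z):=\prod_{j=1}^{k}(z-\lambda_j),$$
with $\pi_0\equiv 1$, the divided differences read in the confluent sense when nodes coincide.

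It remains to evaluate $e_m^\ast\pi_k(H_m)e_1$. Because $\pi_k$ is monic of degree $k$, we have $\pi_k(H_m)e_1=H_m^ke_1+(\text{lower powers of }H_m)\,e_1$, and the lower-order terms add only contributions in $\operatorname{span}(e_1,\ldots,e_k)$; hence $\pi_k(H_m)e_1$ keeps the support $\operatorname{span}(e_1,\ldots,e_{k+1})$ and the $e_{k+1}$-component $\prod_{j=1}^{k}(H_m)_{j+1,j}$ supplied by the lemma. Applying $e_m^\ast$ therefore gives $0$ for every $k<m-1$, while for $k=m-1$ it extracts precisely the $e_m$-component $\gamma_m=\prod_{j=1}^{m-1}(H_m)_{j+1,j}$. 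Only the top summand survives, proving $e_m^\ast f(H_m)e_1=\gamma_m f[\lambda_1,\ldots,\lambda_m]$. The step demanding the most care is the passage to matrix functions with repeated eigenvalues, where one must read the Newton form confluently and invoke non-derogatoriness; everything else is elementary bookkeeping. As an alternative that bypasses interpolation, I would use the Cauchy integral $f(H_m)=\frac{1}{2\pi\ii}\oint f(z)(zI-H_m)^{-1}\,\dd z$ together with the cofactor identity $e_m^\ast(zI-H_m)^{-1}e_1=\gamma_m/\prod_{j=1}^{m}(z-\lambda_j)$ (the relevant minor of $zI-H_m$ being triangular) and the contour representation of divided differences, which delivers the claim in one line.
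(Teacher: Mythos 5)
Your proof is correct. Note that the paper itself offers no proof of this statement: it is quoted as a known result with a pointer to the literature (Celledoni--Moret), so there is no in-text argument to compare against. Your argument supplies the standard derivation and the details check out: the induction showing that $H_m^k e_1$ is supported on $\mathrm{span}(e_1,\ldots,e_{k+1})$ with $e_{k+1}$-component $\prod_{j=1}^{k}(H_m)_{j+1,j}$ is exactly what makes $e_1$ cyclic, hence $H_m$ non-derogatory, which is the point that legitimises replacing $f(H_m)$ by the degree-$(m-1)$ Hermite interpolant in Newton form even when eigenvalues coincide; and the monicity of the Newton basis polynomials $\pi_k$ correctly kills every term except $k=m-1$. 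You rightly flag the confluent reading of the divided differences as the one delicate step, and this is consistent with how the paper itself treats divided differences (it explicitly works in the confluent sense). Your alternative via the resolvent is also sound: the $(m,1)$ entry of $(zI-H_m)^{-1}$ is $\gamma_m/\prod_{j}(z-\lambda_j)$ because the relevant minor is triangular with diagonal entries $-(H_m)_{j+1,j}$, and combining the Cauchy integral for $f(H_m)$ with the contour representation of $f[\lambda_1,\ldots,\lambda_m]$ gives the identity immediately; this second route has the advantage of sidestepping the interpolation machinery entirely and only requires $f$ to be analytic on a neighbourhood of the spectrum, which matches the hypothesis as stated.
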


%
%
%
For $f = (\varphi_p)_t:\lambda\mapsto \varphi_p(t\lambda)$
we will also make use of the following result.
\footnote{Theorem~\ref{thm.emphie1toemexpe1} can be generalized to the case
$ t^p e_m^\ast \varphi_{k+p}(t H_m)e_1 = e_{m+p}^\ast \varphi_k(t\widetilde{H}_{p,m}) e_1$
with $ k \in \N $,
see~\cite[Theorem 2.1]{MH11}. The case $k=0$ is sufficient for our purpose.}

\begin{theorem}[Corollary 1 in~\cite{Si98}]\label{thm.emphie1toemexpe1}
(Expressing $ \varphi $-functions via dilated $ \exp $-functions.)
For $ t \in \R $,
\begin{equation*}
t^p e_m^\ast \varphi_p(t H_m)e_1 = e_{m+p}^\ast \exp(t\widetilde{H}_{p,m}) e_1
\end{equation*}
with
$$
\widetilde{H}_{p,m} =
\begin{pmatrix}
H_m & 0_{m\times p}\\
e_1 e_m^\ast & J_{p\times p}
\end{pmatrix}
\in\C^{(m+p)\times(m+p)}~~~\text{and}~~~
 J_{p\times p}
=
\begin{pmatrix}
0&  &&\\
1 & 0 &&\\
  & \ddots &\ddots& \\
 && 1&0
\end{pmatrix}\in\C^{p\times p}.
$$
\end{theorem}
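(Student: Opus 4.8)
The plan is to exploit the block lower-triangular structure of $\widetilde{H}_{p,m}$ together with the nilpotency of $J_{p\times p}$; I restrict to $p\in\N$, the case $p=0$ being trivial since then $\widetilde{H}_{0,m}=H_m$ and $\varphi_0=\exp$. First I would partition the exponential of the augmented matrix conformally,
$$
\ee^{t\widetilde{H}_{p,m}} = \begin{pmatrix} F_{11}(t) & F_{12}(t) \\ F_{21}(t) & F_{22}(t) \end{pmatrix},
$$
with $F_{21}(t)\in\C^{p\times m}$. Since the vector $e_1\in\C^{m+p}$ is supported in its first $m$ entries and $e_{m+p}\in\C^{m+p}$ in its last entry, the scalar $e_{m+p}^\ast \ee^{t\widetilde{H}_{p,m}} e_1$ is exactly the bottom-left corner entry of the off-diagonal block, namely $e_p^\ast F_{21}(t)\,e_1$ (with unit vectors now understood in $\C^p$ and $\C^m$). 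Everything then reduces to determining $F_{21}$.

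To find $F_{21}$ I would differentiate $F(t)=\ee^{t\widetilde{H}_{p,m}}$, which solves $F'=\widetilde{H}_{p,m}F$, $F(0)=I$. Reading off the blocks gives $F_{11}=\ee^{tH_m}$, $F_{22}=\ee^{tJ_{p\times p}}$, and $F_{12}\equiv 0$ (a homogeneous equation with zero initial data), while the remaining block satisfies the inhomogeneous equation $F_{21}'(t) = e_1 e_m^\ast \ee^{tH_m} + J_{p\times p} F_{21}(t)$ with $F_{21}(0)=0$. By the variation-of-constants formula,
$$
F_{21}(t) = \int_0^t \ee^{(t-s)J_{p\times p}}\, e_1 e_m^\ast\, \ee^{sH_m}\,\dd s,
$$
so that the quantity of interest factors as
$$
e_p^\ast F_{21}(t) e_1 = \int_0^t \big(e_p^\ast \ee^{(t-s)J_{p\times p}} e_1\big)\big(e_m^\ast \ee^{sH_m} e_1\big)\,\dd s.
$$

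The nilpotent structure enters through the first factor: since $J_{p\times p}$ shifts by one subdiagonal, only its $(p-1)$-th power contributes to the $(p,1)$ corner, giving $e_p^\ast \ee^{\tau J_{p\times p}} e_1 = \tau^{p-1}/(p-1)!$. Hence
$$
e_{m+p}^\ast \ee^{t\widetilde{H}_{p,m}} e_1 = \frac{1}{(p-1)!}\int_0^t (t-s)^{p-1}\, e_m^\ast \ee^{sH_m} e_1 \,\dd s.
$$
Substituting $s=(1-\theta)t$, so that $(t-s)^{p-1}=\theta^{p-1}t^{p-1}$ and $\dd s = -t\,\dd\theta$, I would recognize the right-hand side as $t^p$ times the matrix integral representation~\eqref{defphiint} for $\varphi_p$ applied to $H_m$ and contracted against $e_m^\ast$ and $e_1$, which is precisely $t^p e_m^\ast \varphi_p(tH_m)e_1$, proving the claim.

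The main obstacle is not any single computation but the block bookkeeping: keeping straight which unit vectors live in $\C^m$, $\C^p$, or $\C^{m+p}$, and verifying that the corner entry $e_p^\ast \ee^{\tau J_{p\times p}} e_1$ isolates the single power $\tau^{p-1}$. Once the integral form of $F_{21}$ is established, matching it to $\varphi_p$ via~\eqref{defphiint} and the change of variables is routine.
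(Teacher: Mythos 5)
Your argument is correct. Note that the paper itself offers no proof of this statement; it is quoted as Corollary~1 of~\cite{Si98}, where it is obtained in essentially the way you describe: the exponential of the block lower triangular augmented matrix is written out via the variation-of-constants (Van Loan) formula, the nilpotency of $J_{p\times p}$ collapses the corner entry $e_p^\ast \ee^{\tau J_{p\times p}}e_1$ to $\tau^{p-1}/(p-1)!$, and the resulting convolution integral is identified with $t^p\varphi_p(tH_m)$ through the integral representation~\eqref{defphiint}. So your derivation is a complete, self-contained reconstruction of the cited proof rather than a new route; all the block bookkeeping (the $(m+p,1)$ entry sitting in the $(2,1)$ block, $F_{12}\equiv 0$, and the substitution $s=(1-\theta)t$ valid for every real $t$) checks out.
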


The matrix $\widetilde{H}_{p,m}$ in Theorem~\ref{thm.emphie1toemexpe1}
is block triangular with eigenvalues equal to those of $H_m$ and $J_{p\times p}$.
Therefore, $ \text{spec}(\widetilde{H}_m) = \{\lambda_1,\ldots,\lambda_m,0,\ldots,0\}$,
with $0$ as an eigenvalue of multiplicity $p$ (at least).
In our context,
$\widetilde{H}_m$ is upper Hessenberg with a positive lower secondary diagonal and
$\gamma_m=\prod_{j=1}^{m-1} (H_m)_{j+1,j} = \prod_{j=1}^{m+p-1} (\widetilde{H}_m)_{j+1,j}$.
In accordance with Theorem~\ref{thm.matfcttodd} the result of Theorem~\ref{thm.emphie1toemexpe1} holds for divided differences in a similar manner,
$$
t^p (\varphi_p)_{t}[\lambda_1,\ldots,\lambda_m]
= \exp_{t} [ \lambda_1,\ldots,\lambda_m,\underbrace{0,\ldots,0}_{\text{$p$ times}} ].
$$

With Theorem~\ref{thm.matfcttodd} and~\ref{thm.emphie1toemexpe1}
the following equivalent formulations can be used the rewrite the scalar defect
$\delta_{p,m}(t)$ defined in~\eqref{eq.defpm}.
\begin{corollary}\label{thm.defectequal}
Let $\delta_{p,m}(t)$ be the scalar defect given in~\eqref{eq.defpm} for the upper Hessenberg matrix $H_m\in\C^{m\times m}$
with positive secondary diagonal entries.
Denote $0<\gamma_m = \prod_{j=1}^{m-1} (H_m)_{j+1,j}$.
Let $\widetilde{H}_{p,m}\in\C^{m+p}$ be given as in Theorem~\ref{thm.emphie1toemexpe1}.
For the scalar defect we obtain the following equivalent formulations:
\begin{enumerate}[(i)]
\item $\delta_{p,m}(t) = \beta e_m^\ast t^p \varphi_p(t H_m) e_1$
\item \qquad\quad $ =\beta \gamma_m  t^p (\varphi_p)_t[\lambda_1,\ldots,\lambda_m]$
\item\label{thm.defectequal.extendH} \qquad\quad $ = \beta e_{m+p}^\ast \exp(t \widetilde{H}_{p,m}) e_1$
\item \qquad\quad $ = \beta \gamma_m \exp_t[\lambda_1,\ldots,\lambda_m,0_p]$\footnote{
Here we introduce the notation
$(\lambda_1,\ldots,\lambda_m,0_p)=(\lambda_1,\ldots,\lambda_m,0,\ldots,0)\in\C^{m+p}$ for $p\in\N_0$.}
\end{enumerate}
\end{corollary}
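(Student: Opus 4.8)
The plan is to read (i) as the definition \eqref{eq.defpm} of the scalar defect and then to obtain (ii), (iii) and (iv) from it by two invocations of Theorem~\ref{thm.matfcttodd} together with one of Theorem~\ref{thm.emphie1toemexpe1}; since the four quantities chain together, it suffices to connect (i) with each of the remaining three.

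To pass from (i) to (ii), I would apply Theorem~\ref{thm.matfcttodd} to $H_m$ with the analytic function $f=(\varphi_p)_t\colon\lambda\mapsto\varphi_p(t\lambda)$. Since no lucky breakdown is assumed (Remark~\ref{rmk:lucky}), $H_m$ has positive subdiagonal entries and $\gamma_m=\prod_{j=1}^{m-1}(H_m)_{j+1,j}>0$, so the theorem gives $e_m^\ast\varphi_p(tH_m)e_1=\gamma_m(\varphi_p)_t[\lambda_1,\ldots,\lambda_m]$; multiplying by $\beta t^p$ produces (ii). The identity (i)$=$(iii) is then immediate from Theorem~\ref{thm.emphie1toemexpe1}, which states $t^p e_m^\ast\varphi_p(tH_m)e_1=e_{m+p}^\ast\exp(t\widetilde{H}_{p,m})e_1$: scaling by $\beta$ turns the two sides into (i) and (iii). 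Finally, for (iii)$=$(iv) I would apply Theorem~\ref{thm.matfcttodd} a second time, now to the dilated $(m+p)\times(m+p)$ matrix $\widetilde{H}_{p,m}$ with $f=\exp_t$, obtaining $e_{m+p}^\ast\exp(t\widetilde{H}_{p,m})e_1=\gamma_m\exp_t[\lambda_1,\ldots,\lambda_m,0_p]$ after multiplication by $\beta$.

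The one step that genuinely needs checking—and hence the main obstacle—is that $\widetilde{H}_{p,m}$ fulfils the hypotheses of Theorem~\ref{thm.matfcttodd}, i.e.\ that it is upper Hessenberg with positive subdiagonal and with the \emph{same} secondary-diagonal product $\gamma_m$ and the advertised eigenvalue list. This is essentially recorded in the text preceding the statement: the block form shows $\widetilde{H}_{p,m}$ is upper Hessenberg, its subdiagonal consists of the positive subdiagonal of $H_m$, the single unit entry of the $e_1e_m^\ast$ block at position $(m+1,m)$, and the $p-1$ unit subdiagonal entries of $J_{p\times p}$, so that $\prod_{j=1}^{m+p-1}(\widetilde{H}_{p,m})_{j+1,j}=\gamma_m\cdot 1\cdot 1^{p-1}=\gamma_m$; moreover its block-triangular structure gives $\text{spec}(\widetilde{H}_{p,m})=\{\lambda_1,\ldots,\lambda_m,0,\ldots,0\}$ with $0$ of multiplicity $p$, matching the node list $(\lambda_1,\ldots,\lambda_m,0_p)$ in (iv).

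I would close by noting that equating (ii) with (iv) simply reproduces the divided-difference identity $t^p(\varphi_p)_t[\lambda_1,\ldots,\lambda_m]=\exp_t[\lambda_1,\ldots,\lambda_m,0_p]$ already stated above, which serves as an internal consistency check. The case $p=0$ is covered uniformly, since then $\widetilde{H}_{0,m}=H_m$, the node block $0_p$ is empty, and $\exp_t=(\varphi_0)_t$, so all four formulations collapse appropriately.
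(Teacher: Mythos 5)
Your proposal is correct and follows essentially the same route as the paper, which obtains the corollary directly from Theorem~\ref{thm.matfcttodd} (applied to $H_m$ and to $\widetilde{H}_{p,m}$) and Theorem~\ref{thm.emphie1toemexpe1}, using exactly the observations you verify about $\widetilde{H}_{p,m}$ being upper Hessenberg with the same subdiagonal product $\gamma_m$ and spectrum $\{\lambda_1,\ldots,\lambda_m,0_p\}$. Nothing is missing.
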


We remark that the eigenvalues $\lambda_1,\ldots,\lambda_m$
of the Krylov Hessenberg matrix $H_m$ are
also referred to as Ritz values (of $A$) in the literature.


\section{Computable a~posteriori error bounds for the Krylov propagator}\label{sec.uppererrorbound}

The following two propositions are used for the proof of
Theorem~\ref{thm.upperbounderrorfull} below.\footnote{
               We use the notation introduced in the previous sections.}
\begin{proposition}\label{thm.intphiA}
For arbitrary nodes $\lambda_j\in\C$ and $p\in\N_0$,
$$
\int_0^t s^p (\varphi_p)_s[\lambda_1,\ldots,\lambda_k] \,\dd s = t^{p+1} (\varphi_{p+1})_t[\lambda_1,\ldots,\lambda_k].
$$
\end{proposition}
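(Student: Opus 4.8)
The plan is to reduce the claimed identity for divided differences to the corresponding pointwise (scalar) identity in the variable $\lambda$, and then to exploit the linearity of the divided-difference functional together with Fubini's theorem in order to interchange it with the integral $\int_0^t(\cdot)\,\dd s$. Concretely, I would first establish the scalar statement
$$
\int_0^t s^p \varphi_p(s\lambda)\,\dd s = t^{p+1}\varphi_{p+1}(t\lambda), \qquad \lambda\in\C,~p\in\N_0,
$$
and afterwards transport it to divided differences over the nodes $\lambda_1,\ldots,\lambda_k$.

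For the scalar identity I would use the defining power series $\varphi_p(z)=\sum_{k=0}^\infty z^k/(k+p)!$ (the scalar counterpart of~\eqref{eq.defvarphip}), which converges locally uniformly since $\varphi_p$ is entire. Integrating termwise over the compact interval $[0,t]$ (which is justified by this local uniform convergence),
$$
\int_0^t s^p\varphi_p(s\lambda)\,\dd s
= \sum_{k=0}^\infty \frac{\lambda^k}{(k+p)!}\int_0^t s^{k+p}\,\dd s
= \sum_{k=0}^\infty \frac{\lambda^k\,t^{k+p+1}}{(k+p+1)!}
= t^{p+1}\varphi_{p+1}(t\lambda),
$$
where the last step is again the defining series of $\varphi_{p+1}$. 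This settles the pointwise version for every fixed $\lambda\in\C$.

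It remains to apply the divided-difference functional in $\lambda$ to both sides and to pull it inside the integral. For pairwise distinct nodes this is immediate from the explicit representation $f[\lambda_1,\ldots,\lambda_k]=\sum_{j=1}^k f(\lambda_j)/\prod_{i\neq j}(\lambda_j-\lambda_i)$, since the divided difference is then a finite linear combination of point evaluations, whence
$$
\int_0^t s^p (\varphi_p)_s[\lambda_1,\ldots,\lambda_k]\,\dd s
= \sum_{j=1}^k \frac{1}{\prod_{i\neq j}(\lambda_j-\lambda_i)}\int_0^t s^p\varphi_p(s\lambda_j)\,\dd s
= t^{p+1}(\varphi_{p+1})_t[\lambda_1,\ldots,\lambda_k],
$$
applying the scalar identity to each node $\lambda_j$. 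The main (and essentially only) obstacle is the \emph{confluent} case of repeated nodes, where the divided difference also involves derivatives $\partial_\lambda^j \varphi_p(s\lambda)=s^j\varphi_p^{(j)}(s\lambda)$. I would handle this in one of two ways: either by invoking the Hermite--Genocchi representation of the divided difference as an integral of $\partial_\lambda^{k-1}(\varphi_p)_s$ over the standard simplex, which is a linear functional that commutes with $\int_0^t(\cdot)\,\dd s$ by Fubini since the integrand is jointly continuous on a compact set; or, more economically, by a continuity argument, observing that both sides of the asserted identity are continuous (indeed analytic) functions of $(\lambda_1,\ldots,\lambda_k)\in\C^k$ because $\varphi_p$ and $\varphi_{p+1}$ are entire, so the equality on the dense set of distinct-node configurations extends to all configurations by continuity.
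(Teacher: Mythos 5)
Your proof is correct, but it transports the scalar identity to divided differences by a different mechanism than the paper. The paper first proves the integration identity $\int_0^t s^p\varphi_p(sA)w\,\dd s = t^{p+1}\varphi_{p+1}(tA)w$ at the level of matrix functions (same termwise series integration you perform for scalars), and then invokes the Opitz formula, which represents $(\varphi_p)_t[\lambda_1,\ldots,\lambda_m]$ as the corner entry $e_m^\ast\varphi_p(t\Theta_m)e_1$ of the matrix function of the lower bidiagonal node matrix $\Theta_m$; applying the matrix identity to $\Theta_m$ and extracting that entry gives the result in one stroke, with confluent nodes handled automatically and no case distinction. You instead prove the scalar identity, use the explicit partial-fraction form of the divided difference to commute it with the integral for pairwise distinct nodes, and then patch the confluent case by Hermite--Genocchi plus Fubini or by analytic continuity from the dense set of distinct configurations. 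Both of your closing arguments are sound (for the continuity route one only needs joint continuity of $(s,\lambda)\mapsto(\varphi_p)_s[\lambda_1,\ldots,\lambda_k]$ on compacts, which Hermite--Genocchi supplies, to pass the limit through the integral over $[0,t]$). The trade-off: your route is more elementary in that it avoids matrix functions of the bidiagonal node matrix entirely, at the cost of an explicit extra step for repeated nodes; the paper's Opitz-based route is shorter and uniform in the nodes, and reuses machinery (Corollary~\ref{thm.defectequal}, Theorem~\ref{thm.matfcttodd}) already central to the rest of the paper.
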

\begin{proof}
See Appendix~\ref{sec.propertiesofdd}.
\end{proof}

\begin{proposition}[Lemma including eq.~(5.1.1) in~\cite{MNP84}]\label{thm.realnodesposphiandupperbound}
For arbitrary nodes $\lambda_j=\xi_j+\ii\eta_j \in \C$,
$$
|\exp_t[\lambda_1,\ldots,\lambda_k]| \leq \exp_t[\xi_1,\ldots,\xi_k].
$$
\end{proposition}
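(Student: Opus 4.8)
The plan is to use the Hermite--Genocchi integral representation of divided differences, which converts the claimed inequality into a pointwise modulus estimate under an integral sign. First I would recall that for an analytic function $f$ and nodes $\lambda_1,\ldots,\lambda_k\in\C$ one has
$$
f[\lambda_1,\ldots,\lambda_k] = \int_{\Sigma_{k-1}} f^{(k-1)}\Big(\sum_{j=1}^k s_j\lambda_j\Big)\,\dd s,
\qquad
\Sigma_{k-1}=\Big\{(s_1,\ldots,s_k):s_j\geq 0,\ \textstyle\sum_{j=1}^k s_j=1\Big\},
$$
where $\Sigma_{k-1}$ carries Lebesgue measure $\dd s=\dd s_1\cdots\dd s_{k-1}$ (with $s_k=1-\sum_{j<k}s_j$). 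This representation depends continuously on the nodes, so it also covers the confluent case and is thus consistent with the divided-difference convention adopted above.

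Next I would specialize to $f=\exp_t$, for which $f^{(k-1)}(z)=t^{k-1}\ee^{tz}$. Writing $\lambda_j=\xi_j+\ii\eta_j$, the convex combination splits as $\sum_j s_j\lambda_j=\sum_j s_j\xi_j+\ii\sum_j s_j\eta_j$, so the integrand factorizes,
$$
\ee^{t\sum_j s_j\lambda_j}=\ee^{t\sum_j s_j\xi_j}\cdot\ee^{\ii t\sum_j s_j\eta_j}.
$$
Since $t$ and the $\eta_j$ are real, the second factor has modulus one at every point of the simplex, whence $\big|\ee^{t\sum_j s_j\lambda_j}\big|=\ee^{t\sum_j s_j\xi_j}$.

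Finally, using $t>0$ so that $t^{k-1}>0$, I would pull out the positive prefactor and apply the triangle inequality for integrals:
$$
|\exp_t[\lambda_1,\ldots,\lambda_k]|
= t^{k-1}\Big|\int_{\Sigma_{k-1}}\ee^{t\sum_j s_j\lambda_j}\,\dd s\Big|
\leq t^{k-1}\int_{\Sigma_{k-1}}\ee^{t\sum_j s_j\xi_j}\,\dd s.
$$
The right-hand side is precisely the Hermite--Genocchi representation of $\exp_t[\xi_1,\ldots,\xi_k]$ (the real nodes produce the same order $k-1$ and hence the same factor $t^{k-1}$), which yields the assertion. The argument is essentially mechanical; the only point requiring a little care is invoking the integral representation in the confluent case and checking that the $t^{k-1}$ factor matches on both sides, which is automatic here.
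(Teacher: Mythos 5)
Your proof is correct and follows essentially the same route as the paper: apply the Hermite--Genocchi representation to $\exp_t$, observe that the modulus of the integrand equals the corresponding real-part integrand, and use the triangle inequality to land on $\exp_t[\xi_1,\ldots,\xi_k]$. The only (immaterial) difference is that you parametrize the simplex in barycentric coordinates, whereas the paper uses the iterated-integral form of the same formula.
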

\begin{proof}
See Appendix~\ref{sec.propertiesofdd}.
\end{proof}

We now derive upper bounds for the error
via its representation by the defect integral~\eqref{eq.defint}.
%
%
%
\begin{theorem}\label{thm.upperbounderrorfull}
Let $p\in\N_0$, $\mu_2(A)\leq 0$, and assume that round-off errors are sufficiently small
(see Corollary~\ref{thm:errorintexactarithmetic}).
For the eigenvalues of $H_m$ we write $\lambda_j=\xi_j+\ii\eta_j$, $j=1,\ldots,m$.
An upper bound on the error norm is given by
\begin{equation}\label{eq.thm1realnodes}
\|l_{p,m}(t)\|_2 \leq  \beta h_{m+1,m} \gamma_m t (\varphi_{p+1})_t[\xi_1,\ldots,\xi_m].
\end{equation}
\end{theorem}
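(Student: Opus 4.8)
The plan is to chain the preceding results: start from the defect-integral bound of Corollary~\ref{thm:errorintexactarithmetic}, rewrite the pointwise defect as a divided difference of $\exp$, apply the real-part majorization of Proposition~\ref{thm.realnodesposphiandupperbound}, convert back to a $\varphi_p$ divided difference, and integrate using Proposition~\ref{thm.intphiA}.

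First, under the hypotheses $\mu_2(A)\le 0$ and negligible round-off, Corollary~\ref{thm:errorintexactarithmetic} gives
\[
\|l_{p,m}(t)\|_2 \le L_{p,m}(t) = \frac{h_{m+1,m}}{t^p}\int_0^t |\delta_{p,m}(s)|\,\dd s,
\]
so it suffices to bound the integrand pointwise. For this I would use the last formulation in Corollary~\ref{thm.defectequal}, namely $\delta_{p,m}(s) = \beta\gamma_m\,\exp_s[\lambda_1,\ldots,\lambda_m,0_p]$. Since $\beta=\|v\|_2>0$ and $\gamma_m=\prod_{j=1}^{m-1}(H_m)_{j+1,j}>0$ by the no-lucky-breakdown assumption (Remark~\ref{rmk:lucky}), these factors pull out of the modulus cleanly.

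Next, applying Proposition~\ref{thm.realnodesposphiandupperbound} to the node set $(\lambda_1,\ldots,\lambda_m,0_p)$ — whose real parts are $(\xi_1,\ldots,\xi_m,0_p)$, the appended zeros being already real — yields
\[
|\delta_{p,m}(s)| = \beta\gamma_m\,|\exp_s[\lambda_1,\ldots,\lambda_m,0_p]| \le \beta\gamma_m\,\exp_s[\xi_1,\ldots,\xi_m,0_p].
\]
Reading the $\varphi$-to-$\exp$ identity recorded just before Corollary~\ref{thm.defectequal} for the real nodes $\xi_j$, i.e.\ $\exp_s[\xi_1,\ldots,\xi_m,0_p] = s^p (\varphi_p)_s[\xi_1,\ldots,\xi_m]$, this becomes $|\delta_{p,m}(s)| \le \beta\gamma_m\,s^p(\varphi_p)_s[\xi_1,\ldots,\xi_m]$.

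Finally I would substitute into $L_{p,m}(t)$ and integrate with Proposition~\ref{thm.intphiA}, using $\int_0^t s^p(\varphi_p)_s[\xi_1,\ldots,\xi_m]\,\dd s = t^{p+1}(\varphi_{p+1})_t[\xi_1,\ldots,\xi_m]$; after the prefactor $t^{-p}$ cancels, this produces exactly $\beta h_{m+1,m}\gamma_m\, t\,(\varphi_{p+1})_t[\xi_1,\ldots,\xi_m]$, the claimed bound. I do not expect a genuine obstacle, since the argument is a composition of the cited lemmas; the step requiring most care is the sign bookkeeping — verifying the positivity of $\beta$ and $\gamma_m$ to extract them from $|\cdot|$, and noting that Proposition~\ref{thm.realnodesposphiandupperbound} implicitly supplies the nonnegativity of the real-node divided difference, which is what makes the right-hand side a legitimate (nonnegative) upper bound.
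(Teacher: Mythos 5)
Your proposal is correct and follows essentially the same route as the paper's proof: Corollary~\ref{thm:errorintexactarithmetic}, the divided-difference form of the defect from Corollary~\ref{thm.defectequal}, the real-part majorization of Proposition~\ref{thm.realnodesposphiandupperbound}, and integration via Proposition~\ref{thm.intphiA}. The only (immaterial) difference is the order of operations: the paper integrates $\exp_s[\xi_1,\ldots,\xi_m,0_p]$ first and then converts the result to $t^{p+1}(\varphi_{p+1})_t[\xi_1,\ldots,\xi_m]$, whereas you convert the integrand to $s^p(\varphi_p)_s[\xi_1,\ldots,\xi_m]$ before integrating.
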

\begin{proof}
Due to Corollary~\ref{thm.defectequal}, (iv),
\begin{subequations}
\begin{equation}\label{eq.thm1intmftodd}
\delta_{p,m}(t) = \beta \gamma_m  \exp_t [\lambda_1,\ldots,\lambda_m,0_{p}].
\end{equation}
The divided differences in~\eqref{eq.thm1intmftodd}
span over complex nodes $\lambda_1,\ldots,\lambda_m $ and $ 0_{p}\in\C^p$,
with real parts $\xi_1,\ldots,\xi_m$.
Propositions~\ref{thm.realnodesposphiandupperbound} and~\ref{thm.intphiA} imply
$$
\int_0^t |\exp_s [\lambda_1,\ldots,\lambda_m,0_{p}]| \,\dd s
\leq \int_0^t \exp_s[\xi_1,\ldots,\xi_m,0_{p}] \,\dd s
= t (\varphi_{1})_t [\xi_1,\ldots,\xi_m,0_{p}].
$$
From Corollary~\ref{thm.defectequal} we obtain
\begin{equation}\label{eq.thm1intddback}
t (\varphi_{1})_t[\xi_1,\ldots,\xi_m,0_{p}]
= \exp_t[\xi_1,\ldots,\xi_m,0_{p+1}] = t^{p+1} (\varphi_{p+1})_t[\xi_1,\ldots,\xi_m].
\end{equation}
\end{subequations}
Eqs.~\eqref{eq.thm1intmftodd}--\eqref{eq.thm1intddback}
together with Corollary~\ref{thm:errorintexactarithmetic} imply~\eqref{eq.thm1realnodes}.
\qed
\end{proof}

For the case of $H_m$ having real eigenvalues,
the assertion of Theorem~\ref{thm.upperbounderrorfull} can be reformulated
in the following way (see~\cite[Proposition 6]{JAK19}).
%
%
%
\begin{corollary}\label{cor.uppererrorboundreal}
Assume $\mu_2(A)\leq 0$
and that round-off errors are sufficiently small
(see Corollary~\ref{thm:errorintexactarithmetic}).
For the case of $H_m$ having real eigenvalues $\lambda_1,\ldots,\lambda_m\in\R $, the upper bound on the error norm in Theorem~\ref{thm.upperbounderrorfull}
yields an exact evaluation of the defect integral. Hence,
$$
\|l_{p,m}(t)\|_2 \leq  L_{p,m}(t) =
\beta h_{m+1,m} t \big( e_m^\ast \varphi_{p+1}(tH_m) e_1 \big).
$$
\end{corollary}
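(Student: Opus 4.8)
The plan is to specialize Theorem~\ref{thm.upperbounderrorfull} to the case of real eigenvalues and show that the upper bound becomes an equality of the defect integral, which can then be written in closed form. The starting point is the bound \eqref{eq.thm1realnodes},
$$
\|l_{p,m}(t)\|_2 \leq \beta h_{m+1,m} \gamma_m t (\varphi_{p+1})_t[\xi_1,\ldots,\xi_m].
$$
When all eigenvalues $\lambda_j$ are real we have $\xi_j = \lambda_j$, so the bounding step supplied by Proposition~\ref{thm.realnodesposphiandupperbound} becomes trivial (the imaginary parts $\eta_j$ vanish). First I would observe that in this situation no information is lost in passing from $\lambda_j$ to $\xi_j$, and in fact the absolute value inside the defect integral can be handled exactly: I want to argue that $\exp_s[\lambda_1,\ldots,\lambda_m,0_p]$ has constant sign (indeed is nonnegative) for $s\in[0,t]$, so that $|\delta_{p,m}(s)|$ integrates exactly rather than merely being bounded.

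Concretely, I would revisit the chain \eqref{eq.thm1intmftodd}--\eqref{eq.thm1intddback} used in the proof of Theorem~\ref{thm.upperbounderrorfull}, but now tracking equalities instead of the inequality. For real nodes the divided difference $\exp_s[\lambda_1,\ldots,\lambda_m,0_p]$ admits the Hermite--Genocchi integral representation, which exhibits it as an integral of $\exp$ of a real convex combination of real nodes against a nonnegative density over the simplex; hence it is real and nonnegative, and the same holds with $\xi_j$ in place of $\lambda_j$ since they coincide. Therefore $|\delta_{p,m}(s)| = \delta_{p,m}(s)$ (up to the positive factor $\beta\gamma_m$), and the inequality furnished by Proposition~\ref{thm.realnodesposphiandupperbound} is an identity. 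Consequently the defect integral $L_{p,m}(t)$ is evaluated exactly by Proposition~\ref{thm.intphiA}, giving
$$
L_{p,m}(t) = \beta h_{m+1,m} \gamma_m t (\varphi_{p+1})_t[\lambda_1,\ldots,\lambda_m].
$$

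Finally I would convert this divided-difference expression back into the matrix-function form claimed in the statement. Here I invoke Theorem~\ref{thm.matfcttodd} applied to $f = (\varphi_{p+1})_t$: since $H_m$ is upper Hessenberg with positive subdiagonal and eigenvalues $\lambda_1,\ldots,\lambda_m$, we have $e_m^\ast \varphi_{p+1}(tH_m) e_1 = \gamma_m (\varphi_{p+1})_t[\lambda_1,\ldots,\lambda_m]$, so that $\gamma_m t (\varphi_{p+1})_t[\lambda_1,\ldots,\lambda_m] = t\,e_m^\ast \varphi_{p+1}(tH_m) e_1$, yielding the stated closed form $L_{p,m}(t) = \beta h_{m+1,m} t\,(e_m^\ast \varphi_{p+1}(tH_m) e_1)$. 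Combined with Corollary~\ref{thm:errorintexactarithmetic}, which gives $\|l_{p,m}(t)\|_2 \leq L_{p,m}(t)$, this completes the argument. The main obstacle I anticipate is justifying rigorously that the real-node divided difference does not change sign on $[0,t]$, so that the absolute value in the defect integral can be dropped; the Hermite--Genocchi representation (or equivalently the positivity of the weights in the confluent divided difference of the exponential over real nodes, as already exploited in Proposition~\ref{thm.realnodesposphiandupperbound}) is the tool I would lean on, and everything else reduces to bookkeeping with the identities already established in Corollary~\ref{thm.defectequal} and Proposition~\ref{thm.intphiA}.
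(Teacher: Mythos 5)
Your proposal is correct and follows essentially the same route the paper intends (the paper leaves the details implicit, pointing to Theorem~\ref{thm.upperbounderrorfull} and \cite[Proposition 6]{JAK19}): for real eigenvalues the Hermite--Genocchi representation shows $\exp_s[\lambda_1,\ldots,\lambda_m,0_p]\geq 0$, so the absolute value in $L_{p,m}(t)$ is harmless, Proposition~\ref{thm.intphiA} evaluates the integral exactly, and Theorem~\ref{thm.matfcttodd} converts the divided difference back to $e_m^\ast\varphi_{p+1}(tH_m)e_1$. All steps are correctly justified; nothing is missing.
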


As a further corollary we formulate an upper bound on the error norm
which is cheaper to evaluate compared
to the bound from Theorem~\ref{thm.upperbounderrorfull}
but may be less tight.
%
%
%
Using the Mean Value Theorem,~\cite[eq. (B.26)]{Hi08} or~\cite[eq. (44)]{Bo05},
for the divided differences in Theorem~\ref{thm.upperbounderrorfull},
eq.~\eqref{eq.thm1realnodes} we obtain the following result
which corresponds to~\cite[Theorem 1 and 2]{JAK19}.
For the exponential of a skew-Hermitian matrix
a similar error estimate has been used in~\cite{KBC05}
and is based on ideas of~\cite{PL86} with some lack of theory.
\begin{corollary}\label{thm.upperbounderrorfull2}
Let $p\in\N_0$, $\mu_2(A)\leq 0$,
and assume that round-off errors are sufficiently small
(see Corollary~\ref{thm:errorintexactarithmetic}).
Let ${\xi}_{\max} = 0 $ for $p\in\N$ and ${\xi}_{\max} = \max_{j=1,\ldots,m} \xi_j \leq 0$ for $p=0$
and eigenvalues $\lambda_j=\xi_j+\ii\mu_j\in\C$ of $H_m$.
An upper bound on the error norm is given by
$$
\|l_{p,m}(t)\|_2 \leq  \beta h_{m+1,m} \frac{ \gamma_m t^m \ee^{t{\xi}_{\max}} }{(m+p)!}
\leq \beta h_{m+1,m} \frac{ \gamma_m t^m }{(m+p)!}.
$$
\end{corollary}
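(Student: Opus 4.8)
The plan is to start from the divided-difference bound \eqref{eq.thm1realnodes} of Theorem~\ref{thm.upperbounderrorfull}, namely $\|l_{p,m}(t)\|_2\le\beta h_{m+1,m}\gamma_m t\,(\varphi_{p+1})_t[\xi_1,\ldots,\xi_m]$, and to collapse the $m$-node divided difference into a single derivative evaluation by the Mean Value Theorem for divided differences (\cite[eq.~(B.26)]{Hi08}, \cite[eq.~(44)]{Bo05}). Over real nodes this replaces $(\varphi_{p+1})_t[\xi_1,\ldots,\xi_m]$ by $\tfrac{1}{(m-1)!}\tfrac{\dd^{m-1}}{\dd\lambda^{m-1}}\varphi_{p+1}(t\lambda)\big|_{\lambda=\zeta}$ for a mean value $\zeta$ lying in the convex hull of the nodes; the denominator $(m+p)!$ of the target constant and the power $t^m$ should then emerge from this single derivative, while the exponential factor $\ee^{t\xi_{\max}}$ should come from the location of $\zeta$.

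Rather than differentiating $\varphi_{p+1}$ directly, I would route the computation through the exponential representation of Corollary~\ref{thm.defectequal}. By the identity underlying \eqref{eq.thm1intddback} one has $t\,(\varphi_{p+1})_t[\xi_1,\ldots,\xi_m]=t^{-p}\exp_t[\xi_1,\ldots,\xi_m,0_{p+1}]$, a divided difference of $\exp_t:\lambda\mapsto\ee^{t\lambda}$ over $m+p+1$ nodes. Applying the Mean Value Theorem to this and using $(\exp_t)^{(m+p)}(\zeta)=t^{m+p}\ee^{t\zeta}$ gives, in one stroke,
\begin{equation*}
t\,(\varphi_{p+1})_t[\xi_1,\ldots,\xi_m]=\frac{t^{m}\ee^{t\zeta}}{(m+p)!},\qquad \zeta\in\mathrm{conv}\{\xi_1,\ldots,\xi_m,0\},
\end{equation*}
which already supplies the constant $(m+p)!$, the factor $t^m$ and an exponential $\ee^{t\zeta}$. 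Substituting into \eqref{eq.thm1realnodes} yields $\|l_{p,m}(t)\|_2\le\beta h_{m+1,m}\gamma_m t^m\ee^{t\zeta}/(m+p)!$, so the whole argument reduces to bounding $\ee^{t\zeta}$.

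The remaining, and delicate, step is the localization of $\zeta$, which is exactly where the case distinction in the statement enters. For $p\in\N$ the node set always contains the point $0$, and since $\mu_2(A)\le0$ forces $\xi_j\le0$ (recall $\NR(H_m)\subseteq\NR(A)$ up to round-off), the convex hull is $[\min_j\xi_j,0]$, whence $\zeta\le0=\xi_{\max}$ and $\ee^{t\zeta}\le\ee^{t\xi_{\max}}=1$; this gives both inequalities at once. For $p=0$ the collapsed zero node still caps the hull at $0$ and so would only deliver $\ee^{t\zeta}\le1$; to retain the sharper factor $\ee^{t\max_j\xi_j}$ I would instead apply the Mean Value Theorem to the pure $m$-node divided difference $\exp_s[\xi_1,\ldots,\xi_m]$ arising from the defect (Corollary~\ref{thm.defectequal} together with Proposition~\ref{thm.realnodesposphiandupperbound}), for which the mean value satisfies $\zeta(s)\le\max_j\xi_j=\xi_{\max}$ and hence $\ee^{s\zeta(s)}\le\ee^{s\xi_{\max}}$. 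I expect the main obstacle to lie precisely here: reconciling this sharper localization with the defect-integral bound of Corollary~\ref{thm:errorintexactarithmetic} and controlling the resulting time integral so that the clean constant $1/(m+p)!$ and the exponential $\ee^{t\xi_{\max}}$ survive simultaneously. The final inequality $\ee^{t\xi_{\max}}\le1$, valid since $\xi_{\max}\le0$ in both cases, is then immediate.
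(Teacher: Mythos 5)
Your main line --- rewriting $t\,(\varphi_{p+1})_t[\xi_1,\ldots,\xi_m]$ as $t^{-p}\exp_t[\xi_1,\ldots,\xi_m,0_{p+1}]$ and collapsing this $(m+p+1)$-node divided difference by the Mean Value Theorem to $t^{m}\ee^{t\zeta}/(m+p)!$ with $\zeta\in\mathrm{conv}\{\xi_1,\ldots,\xi_m,0\}$ --- is precisely the paper's (one-sentence) proof of this corollary. Since $0$ is always among the nodes and $\xi_j\le 0$, this gives $\zeta\le 0$ and hence the second inequality, as well as the first inequality in every case where $\xi_{\max}=0$, i.e.\ for all $p\in\N$. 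Up to that point your argument is complete and matches the paper.

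The difficulty you isolate for $p=0$ with $\xi_{\max}=\max_j\xi_j<0$ is not a deficiency of your write-up: the sharper factor $\ee^{t\xi_{\max}}$ cannot be recovered by any of the routes you sketch, because already the defect integral violates the claimed bound. For real nodes, $L_{0,m}(t)=\beta h_{m+1,m}\gamma_m\exp_t[\xi_1,\ldots,\xi_m,0]=\beta h_{m+1,m}\gamma_m t^{m}\ee^{t\zeta}/m!$ with $\zeta$ constrained only to $[\min_j\xi_j,\,0]$, and $\zeta$ genuinely exceeds $\max_j\xi_j$ in general; your fallback of bounding $|\delta_{0,m}(s)|\le\beta\gamma_m s^{m-1}\ee^{s\xi_{\max}}/(m-1)!$ then fails at the integration step, since $\int_0^t s^{m-1}\ee^{s\xi_{\max}}\,\dd s>\tfrac{t^m}{m}\,\ee^{t\xi_{\max}}$ whenever $\xi_{\max}<0$. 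In fact the first inequality of the statement appears to be false in this regime: take $A=\mathrm{diag}(0,-2)$, $v=(1,1)^\ast/\sqrt2$, $m=1$, so that $\beta=\gamma_1=h_{2,1}=1$, $H_1=(-1)$, $\xi_{\max}=-1$, and $\mu_2(A)=0$; at $t=3$ the true error norm is $\|(\ee^{-3}-1,\;\ee^{-3}-\ee^{-6})\|_2/\sqrt2\approx 0.67$, while $\beta h_{2,1}\gamma_1 t\,\ee^{t\xi_{\max}}/1!=3\ee^{-3}\approx 0.15$ (the second bound, $t=3$, and the bounds of Theorem~\ref{thm.upperbounderrorfull} and Corollary~\ref{cor.uppererrorboundreal}, namely $1-\ee^{-3}\approx 0.95$, all hold). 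Consistently, the numerical experiments in Section~\ref{sec.numexp.cv} only ever apply this corollary with $\xi_{\max}=0$. So your instinct that this is the delicate point is exactly right; the provable statement is the one with $\xi_{\max}=0$ for all $p\in\N_0$, which your proof establishes.
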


For the case of $H_m$ having purely imaginary eigenvalues,
the divided differences in Theorem~\ref{thm.upperbounderrorfull}
(see~\eqref{eq.thm1realnodes}) can be evaluated directly via~\cite[eq. (B.27)]{Hi08},
$$
t (\varphi_{p+1})_t[0_m] = t^{-p} \exp_t[0_{m+p+1}]   =   \frac{ t^m }{(m+p)!},
$$
hence the assertions of Theorem~\ref{thm.upperbounderrorfull} and Corollary~\ref{thm.upperbounderrorfull2}
coincide in this case.

\paragraph{Accuracy of the previously specified upper bounds on the error norm.}

In the following we again denote $\lambda_1,\ldots,\lambda_m\in\C$
for the eigenvalues of $H_m$,
with $\lambda_j=\xi_j+\ii\eta_j$.
For the scalar defect $\delta_{p,m}(t)$ (see~\eqref{eq.defpm})
we recapitulate Corollary~\ref{thm.defectequal}, in particular
\begin{equation}\label{eq.ddboundofdeffrom00}
\delta_{p,m}(t) = \beta \gamma_m t^p (\varphi_p)_t[\lambda_1,\ldots,\lambda_m]
=  \beta \gamma_m \exp_t[\lambda_1,\ldots,\lambda_m,0_p].
\end{equation}
Theorem~\ref{thm.upperbounderrorfull} and its corollaries make use of the error bound given in Corollary~\ref{thm:errorintexactarithmetic}
and computable upper bounds on the defect integral $L_{p,m}(t)$.
A refinement of the upper bound from Corollary~\ref{thm:errorintexactarithmetic} would require further applications of the large-dimensional
matrix-vector product with $A\in\C^{n\times n}$
and has been shown to be inefficient in terms of computational cost,
see also~\cite[Remark 7]{JAK19}.
The computable upper bounds on the defect integral $L_{p,m}(t)$ will be further discussed.
We recapitulate the upper bound of the divided differences given in Proposition~\ref{thm.realnodesposphiandupperbound},
\begin{equation}\label{eq.ddboundofdeffromabove.recap}
|\exp_t[\lambda_1,\ldots,\lambda_m,0_p]|\leq \exp_t[\xi_1,\ldots,\xi_m,0_p].
\end{equation}
Thus, in the case of $H_m$ having eigenvalues with a sufficiently small imaginary part,
the upper bound in Proposition~\ref{thm.realnodesposphiandupperbound},
is tight.
In the following proposition this statement is made more precise.
\begin{proposition}[Part of a proof in~\cite{MNP84}, eq. (5.2.3)]\label{thm.ddlamlowerbound}
For nodes $\lambda_j=\xi_j+\ii\eta_j \in \C$ and $t\geq 0$
with $\max_j t|\eta_j| \leq \widetilde{\eta}_t < \pi/2$,
$$
0<\cos(\widetilde{\eta}_t) \exp_t[\xi_1,\ldots,\xi_k]  \leq |\exp_t[\lambda_1,\ldots,\lambda_k]|.
$$
\end{proposition}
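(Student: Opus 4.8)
The plan is to pass from divided differences to the Hermite--Genocchi integral representation over the standard simplex, where the modulus of the integrand separates into a strictly positive real factor and a unimodular phase whose argument is controlled by the hypothesis $\max_j t|\eta_j|\le\widetilde\eta_t<\pi/2$. This is the natural lower-bound companion to the upper bound of Proposition~\ref{thm.realnodesposphiandupperbound}, which rests on the same representation.

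First I would recall the Hermite--Genocchi formula: for analytic $g$ and nodes $\lambda_1,\ldots,\lambda_k$,
\begin{equation*}
g[\lambda_1,\ldots,\lambda_k] = \int_{\Sigma_{k-1}} g^{(k-1)}\Big(\textstyle\sum_{j=1}^k \sigma_j \lambda_j\Big)\,\dd\sigma,
\end{equation*}
where $\Sigma_{k-1}=\{\sigma\in\R^k_{\ge0}:\sum_j\sigma_j=1\}$ carries the measure of total mass $1/(k-1)!$; this holds in the confluent sense as well, so multiple nodes cause no difficulty. Applied to $g=\exp_t$, where $g^{(k-1)}(z)=t^{k-1}\ee^{tz}$ and $\sum_j\sigma_j\lambda_j=\sum_j\sigma_j\xi_j+\ii\sum_j\sigma_j\eta_j$, this gives
\begin{equation*}
\exp_t[\lambda_1,\ldots,\lambda_k] = t^{k-1}\int_{\Sigma_{k-1}} \ee^{t\sum_j \sigma_j \xi_j}\,\ee^{\ii t\sum_j \sigma_j \eta_j}\,\dd\sigma.
\end{equation*}
Since each $\sigma\in\Sigma_{k-1}$ is a convex combination, the phase obeys $t\big|\sum_j\sigma_j\eta_j\big|\le\sum_j\sigma_j\,t|\eta_j|\le\max_j t|\eta_j|\le\widetilde\eta_t<\pi/2$ uniformly over $\Sigma_{k-1}$.

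Next I would estimate from below via $|z|\ge\real z$ and then use the uniform phase control:
\begin{align*}
|\exp_t[\lambda_1,\ldots,\lambda_k]|
&\ge \real\big(\exp_t[\lambda_1,\ldots,\lambda_k]\big)
= t^{k-1}\int_{\Sigma_{k-1}} \ee^{t\sum_j \sigma_j \xi_j}\cos\Big(t\textstyle\sum_j \sigma_j \eta_j\Big)\,\dd\sigma \\
&\ge \cos(\widetilde\eta_t)\, t^{k-1}\int_{\Sigma_{k-1}} \ee^{t\sum_j \sigma_j \xi_j}\,\dd\sigma
= \cos(\widetilde\eta_t)\,\exp_t[\xi_1,\ldots,\xi_k].
\end{align*}
The middle inequality uses $\cos(t\sum_j\sigma_j\eta_j)\ge\cos(\widetilde\eta_t)>0$ (the argument lies in $[-\widetilde\eta_t,\widetilde\eta_t]\subset(-\pi/2,\pi/2)$, where $\cos$ is even and decreasing) together with positivity of $\ee^{t\sum_j\sigma_j\xi_j}$; the final equality is Hermite--Genocchi applied to the real nodes $\xi_j$. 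For $t>0$ this last integral is strictly positive, which yields the strict lower bound $0<\cos(\widetilde\eta_t)\exp_t[\xi_1,\ldots,\xi_k]$.

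The step demanding the most care is the uniform phase estimate on the simplex: the whole argument hinges on the convex-combination structure forcing $t\sum_j\sigma_j\eta_j$ into $(-\pi/2,\pi/2)$ for \emph{every} $\sigma$, so that $\cos$ admits the pointwise lower bound $\cos\widetilde\eta_t>0$ and can be pulled out of the integral. Without this control the passage $|z|\ge\real z$ would be lossy and the constant $\cos\widetilde\eta_t$ could not be recovered. (The degenerate case $t=0$ with $k\ge2$, where both sides vanish, is trivial and is handled separately.)
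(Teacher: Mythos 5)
Your proof is correct and follows essentially the same route as the paper's: both rest on the Hermite--Genocchi representation and on the observation that the phase $t\sum_j\sigma_j\eta_j$ is a convex combination of the $t\eta_j$ and hence lies in $[-\widetilde\eta_t,\widetilde\eta_t]\subset(-\pi/2,\pi/2)$. The only difference is the finish: the paper invokes the Mean Value Theorem to factor the divided difference exactly as $(\cos(tx)+\ii\sin(ty))\,\exp_t[\xi_1,\ldots,\xi_k]$ and then bounds the modulus of that factor below by $\cos(\widetilde\eta_t)$, whereas you use $|z|\ge\real z$ together with the pointwise bound $\cos(\cdot)\ge\cos(\widetilde\eta_t)$ under the integral --- a marginally more elementary ending that avoids applying the mean value argument to a complex-valued integrand.
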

\begin{proof}
See Appendix~\ref{sec.propertiesofdd}.
\end{proof}
Under the assumptions of Proposition~\ref{thm.ddlamlowerbound} we conclude
\begin{equation}\label{eq.cosexp01}
0<\cos(\widetilde{\eta}_t) \exp_t[\xi_1,\ldots,\xi_m,0_p]  \leq |\exp_t[\lambda_1,\ldots,\lambda_m,0_p]|.
\end{equation}
With~\eqref{eq.ddboundofdeffrom00},~\eqref{eq.ddboundofdeffromabove.recap},~\eqref{eq.cosexp01}
and following the proof of Theorem~\ref{thm.upperbounderrorfull}
the defect integral in~\eqref{eq.defint} can be enclosed by
\begin{equation}\label{eq.ddboundofdefintfrom0}
0<\cos(\widetilde{\eta}_t) \cdot \beta \gamma_m h_{m+1,m} t (\varphi_{p+1})_t[\xi_1,\ldots,\xi_m]
\leq L_{p,m}(t)
\leq \beta \gamma_m h_{m+1,m} t (\varphi_{p+1})_t[\xi_1,\ldots,\xi_m].
\end{equation}
Hence,
\begin{equation}\label{eq.ddboundofdefintfrom1}
L_{p,m}(t) = \big( 1 + \Ono(|t\eta|^2) \big) \beta \gamma_m h_{m+1,m} t (\varphi_{p+1})_t[\xi_1,\ldots,\xi_m],
\end{equation}
using the notation $\Ono(|t\eta|^2)$ in the sense of $\Ono(|t\eta|)= \Ono(\max_j t|\eta_j|)$ for $t|\eta_j| \to 0$.
Following Proposition~\ref{thm.ddlamlowerbound} the choice of $\widetilde{\eta}_t$
is independent of $\xi_1,\ldots,\xi_m$,
and this carries over to the constant in~\eqref{eq.ddboundofdefintfrom1}.

Summarizing, we see that the defect integral can be computed exactly for the case of $H_m$ having real eigenvalues
(Corollary~\ref{cor.uppererrorboundreal}),
and a computable upper bound can be given which is tight for the case of $H_m$ having eigenvalues sufficiently close to the real axis
(Theorem~\ref{thm.upperbounderrorfull} and eq.~\eqref{eq.ddboundofdefintfrom1}).

The approach underlying Theorem~\ref{thm.upperbounderrorfull}
does not enable us to specify
the asymptotic constant in~\eqref{eq.ddboundofdefintfrom1}.
Therefore, we use the asymptotic expansion of the divided differences,
$|\exp_t[\lambda_1,\ldots,\lambda_m,0_p]|$ in~\eqref{eq.ddboundofdeffrom00},
derived in Appendix~\ref{sec.expansionofrho},
to discuss the asymptotic behavior of the defect norm $|\delta_{p,m}(t)|$ for $t\to 0$.
Theorem~\ref{thm.rho12asymptotic} from Appendix~\ref{sec.expansionofrho} implies
\begin{equation}\label{eq.defectrho}
\begin{aligned}
&| \exp_t[\lambda_1,\ldots,\lambda_m,0_p] | = \frac{t^{m+p-1}}{(m+p-1)!}
\exp\big(\rho_1 t + \rho_2 t^2/2 + \Ono(t^3) \big),\\
&\text{with}~~~ \rho_1 = \avg_p(\xi) ~~~\text{and}~~~ \rho_2 = \frac{\var_p(\xi) - \var_p(\eta)}{m+p+1}.
\end{aligned}
\end{equation}
Here, the asymptotics holds for $t\to 0$, $ \avg_p(\xi) = \sum_{j=1}^m \xi_j /(m+p)$
is the average, and
$ \var_p(\xi) = \big( \sum_{j=1}^m (\xi_j - \avg_p(\xi))^2 + p\avg_p(\xi)^2 \big)/ (m+p) $
is the variance of the sequence $\{\xi_1,\ldots,\xi_m,0_p\}$
and $\var_p(\eta)$ for the variance of the sequence $\{\eta_1,\ldots,\eta_m,0_p\}$.
\begin{remark}\label{rm.defderivationsH}
For $H_m$ with purely imaginary eigenvalues ($\lambda_j\in\ii\R$),
e.g. in the skew-Hermitian case,
the following asymptotic expansion for the defect is obtained
from~\eqref{eq.defectrho},
\footnote{It can be shown that the remainder is of
even order $\Ono(t^4)$ in this case.}
\begin{equation}\label{eq.defderivationvar}
|\delta_{p,m}(t)|
=\beta \gamma_m \frac{t^{m+p-1}}{(m+p-1)!}
\exp\Big( -\frac{\var_p(\eta)}{2(m+p+1)}t^2 + \Ono(t^3) \Big)
~~~\text{for}~~ t\to 0.
\end{equation}
\end{remark}
We use the expansion from~\eqref{eq.defectrho} for $| \exp_t[\lambda_1,\ldots,\lambda_m,0_p] |$
and $ \exp_t[\xi_1,\ldots,\xi_m,0_p] $ to obtain
\begin{equation}\label{eq.ddboundofdeffrom01}
|\delta_{p,m}(t)| = \exp\Big(-\frac{\var_p(\eta)}{2(m+p+1)}t^2 +\Ono(t^3)\Big) \cdot  \beta \gamma_m t^p (\varphi_p)_t[\xi_1,\ldots,\xi_m].
\end{equation}
Termwise integration of~\eqref{eq.ddboundofdeffrom01} and the proper prefactor
gives an asymptotic expansion for the defect integral $ L_{p,m}(t) $, similar to~\eqref{eq.ddboundofdefintfrom1},
\begin{equation}\label{eq.ddboundofdeffrom1}
L_{p,m}(t) = \Big(1-\frac{\var_p(\eta) (m+p)t^2}{2(m+p+1)(m+p+2)} +\Ono(t^3) \Big) \cdot  \beta h_{m+1,m}\gamma_m t(\varphi_{p+1})_t[\xi_1,\ldots,\xi_m].
\end{equation}
Omitting further details we state that~\eqref{eq.ddboundofdeffrom1}
is to be understood in an asymptotic sense with an remainder of $\Ono(t^3|\xi||\eta|^2+t^4|\eta|^4)$.
In contrast to~\eqref{eq.ddboundofdefintfrom1} the remainder is depending on $\xi$ terms
but~\eqref{eq.ddboundofdeffrom1} reveals further constants which can be relevant for practical applications.
\begin{remark}\label{rmk.isrealpartboundtight}
With~\eqref{eq.ddboundofdeffrom1} we obtain a computable
estimate for the relative deviation from the 
defect integral to the upper bound in~\eqref{eq.ddboundofdefintfrom0}.
The criterion
$$
\text{ac.est.}1(t):=\frac{\var_p(\eta) (m+p)t^2}{2(m+p+1)(m+p+2)} > 0.1,
$$
can indicate that a tighter estimate on the defect integral could
improve the error bound given in
Theorem~\ref{thm.upperbounderrorfull} in terms of accuracy.
A possible choice are quadrature estimates on the defect integral, see Subsection~\ref{sec.quadest} below.
\end{remark}
A similar criterion can be given for the accuracy of the upper bound,
\begin{equation}\label{eq.defectintupperboundEra}
L_{p,m}(t) \leq \beta h_{m+1,m}\gamma_m \frac{t^{m}}{(m+p)!},
\end{equation}
which appears in Corollary~\ref{thm.upperbounderrorfull2} (with $\xi_{\max}=0$) and~\cite[Theorem 1 and 2]{JAK19}.
With~\eqref{eq.defectrho}, and $\rho_1$ and $\rho_2$ given therein, the defect integral can be written as
\begin{equation}\label{eq.defectintrho}
L_{p,m}(t) = \beta h_{m+1,m}\gamma_m \frac{t^{m}}{(m+p)!}
\Big(1+\rho_1\frac{(m+p)t}{m+p+1} + (\rho_1^2+\rho_2)\frac{(m+p)t^2}{2(m+p+2)}
+\Ono(t^3)\Big)
\end{equation}
for $t\to 0$.
In contrast to the error bound in Corollary~\ref{thm.upperbounderrorfull2},
the formulas for $\rho_1$ and $\rho_2$ in~\eqref{eq.defectrho} require
the evaluation of the eigenvalues of $H_m$.
The following Proposition gives a formula for $\rho_1$ and $\rho_2$
which does not require computation of the eigenvalues of $H_m$
and can be evaluated on the fly.
\begin{proposition}[Evaluation of $\rho_1$ and $\rho_2$ in terms of entries of $H_m$]\label{thm.computeremainderera}
The coefficients $\rho_1$ and $\rho_2$ in~\eqref{eq.defectrho} can be rewritten as
\begin{align*}
&\rho_1 = \frac{\real(S_1)}{m+p},~~~\rho_2= \frac{\imag(S_1)^2-\real(S_1)^2}{(m+p)^2}+ \frac{\real(S_1^2+S_2) }{(m+p)(m+p+1)},~~~\text{with}\\
&S_1=\sum_{j=1}^m (H_m)_{j,j} ~~~\text{and}~~ S_2=\sum_{j=1}^m (H_m)^2_{j,j} + 2\sum_{j=1}^{m-1} (H_m)_{j+1,j}(H_m)_{j,j+1}.
\end{align*}
\end{proposition}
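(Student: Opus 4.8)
The plan is to identify $S_1$ and $S_2$ as the first two power sums of the eigenvalues $\lambda_1,\ldots,\lambda_m$ of $H_m$, and then to express the average and variances defining $\rho_1$ and $\rho_2$ in~\eqref{eq.defectrho} through these power sums. Writing $\lambda_j=\xi_j+\ii\eta_j$ throughout, the whole argument reduces to one structural identity followed by elementary algebra.

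First I would note that $S_1=\operatorname{tr}(H_m)=\sum_{j=1}^m \lambda_j$ is immediate. The one preliminary computation worth carrying out is the identity $S_2=\operatorname{tr}(H_m^2)=\sum_{j=1}^m \lambda_j^2$. For this I expand $\operatorname{tr}(H_m^2)=\sum_{i,k}(H_m)_{i,k}(H_m)_{k,i}$ and use that $H_m$ is upper Hessenberg, so the product $(H_m)_{i,k}(H_m)_{k,i}$ vanishes unless $|i-k|\leq 1$. The terms with $k=i$ sum to $\sum_{j=1}^m (H_m)_{j,j}^2$, while the pairs $k=i+1$ and $k=i-1$ together contribute $2\sum_{j=1}^{m-1}(H_m)_{j+1,j}(H_m)_{j,j+1}$; this is exactly the stated $S_2$, and as the trace of $H_m^2$ it equals $\sum_j \lambda_j^2$.

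From these two identities I would record the real-part relations $\real(S_1)=\sum_j \xi_j$, $\imag(S_1)=\sum_j \eta_j$, $\real(S_2)=\sum_j(\xi_j^2-\eta_j^2)$, and $\real(S_1^2)=\real(S_1)^2-\imag(S_1)^2$. Since $\avg_p(\xi)=\sum_j \xi_j/(m+p)=\real(S_1)/(m+p)$ and $\rho_1=\avg_p(\xi)$, the formula for $\rho_1$ follows at once. For $\rho_2$ I would rewrite each variance via the identity ``mean of squares minus square of mean'' applied to the padded sequences $\{\xi_1,\ldots,\xi_m,0_p\}$ and $\{\eta_1,\ldots,\eta_m,0_p\}$. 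Because the $p$ appended zeros do not contribute to the sums of squares, this yields $\var_p(\xi)=\sum_j \xi_j^2/(m+p)-\avg_p(\xi)^2$ and likewise for $\eta$.

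Subtracting and inserting the power-sum expressions then gives $\var_p(\xi)-\var_p(\eta)=\real(S_2)/(m+p)-(\real(S_1)^2-\imag(S_1)^2)/(m+p)^2$. Dividing by $m+p+1$ and rearranging over the common denominator $(m+p)^2(m+p+1)$, together with $\real(S_1^2)=\real(S_1)^2-\imag(S_1)^2$, recovers the stated expression for $\rho_2$. I expect the only nonroutine step to be the trace identity for $S_2$, namely the bookkeeping of which index pairs survive under the Hessenberg structure; the remainder is substitution and elementary algebra.
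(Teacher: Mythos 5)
Your proposal is correct and follows essentially the same route as the paper: the key step in both is the identity $S_2=\operatorname{tr}(H_m^2)=\sum_j\lambda_j^2$ obtained from the upper Hessenberg structure (only index pairs with $|i-k|\le 1$ survive), combined with elementary algebra relating the average/variance form of $\rho_1,\rho_2$ to the power sums $S_1,S_2$. The only cosmetic difference is that the paper simply cites its appendix formula~\eqref{eq.rho12} with $m\leftarrow m+p$, whereas you re-derive that algebraic equivalence directly from the padded-sequence variances in~\eqref{eq.defectrho}; your computation checks out.
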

\begin{proof}
For the coefficients $\rho_1$ and $\rho_2$ we use~\eqref{eq.rho12}
with $ m \leftarrow m + p $
and $S_1$ and $S_2$ from~\eqref{eq.sl}.
For the nodes $\lambda_1,\ldots,\lambda_m,0_p$
(with $\lambda_1,\ldots,\lambda_m$ eigenvalues of $H_m$) we obtain
\begin{equation}\label{eq.S1S2derivEra}
\begin{aligned}
&S_1 = \sum_{j=1}^m \lambda_j = \text{Trace}(H_m) = \sum_{j=1}^m (H_m)_{j,j}~~~\text{and}\\
&S_2 =  \sum_{j=1}^m \lambda_j^2 = \text{Trace}(H_m^2) = \sum_{j=1}^m (H_m)^2_{j,j} + 2\sum_{j=1}^{m-1} (H_m)_{j+1,j}(H_m)_{j,j+1}.
\end{aligned}
\end{equation}
The identity for $\text{Trace}(H_m^2)$ in~\eqref{eq.S1S2derivEra}
holds true due to the upper Hessenberg structure of $H_m$.
\qed
\end{proof}
Following the proof of Theorem~\ref{thm.rho12asymptotic} we observe that the case $\rho_1=0$ is possible but results in $\rho_2\neq 0$.
\begin{remark}\label{rmk.isEratight}
With~\eqref{eq.defectintrho} and Proposition~\ref{thm.computeremainderera} we obtain a computable
estimate for the relative deviation from the 
defect integral to the upper bound in~\eqref{eq.defectintupperboundEra}.
The criterion
$$
\text{ac.est.}2(t):=\Big| \rho_1\frac{(m+p)t}{m+p+1} + (\rho_1^2+\rho_2)\frac{(m+p)t^2}{2(m+p+2)} \Big| > 0.1
$$
can indicate that a tighter estimate on the defect integral could
improve the error bound given in
Corollary~\ref{thm.upperbounderrorfull2} in terms of accuracy.
We refer to the error bound in Theorem~\ref{thm.upperbounderrorfull}
in case the eigenvalues of $H_m$ have a significant real part
(which can be observed via $\rho_1$).
\end{remark}

%
%
\subsection{Quadrature-based error estimates}\label{sec.quadest}
First we recapitulate some prior results.
In the dissipative case the integral formulation of the error
from Theorem~\ref{thm:errorint}
can be bounded via the defect integral via Corollary~\ref{thm:errorintexactarithmetic} up to round-off.
We conclude that the defect integral can be computed exactly
for the case of $H_m$ having real eigenvalues
(Corollary~\ref{cor.uppererrorboundreal}),
and a computable upper bound exists which is tight for the case of $H_m$ having eigenvalues sufficiently close to the real axis
(Theorem~\ref{thm.upperbounderrorfull} and eq.~\eqref{eq.ddboundofdefintfrom0}).

For the case of $H_m$ having eigenvalues with a significant imaginary part,
tight estimates are more difficult to obtain.
It can be favorable to approximate the defect integral~\eqref{eq.defint}
by quadrature to obtain an error estimate via Corollary~\ref{thm:errorintexactarithmetic}.
The aim of using quadrature is to obtain an error estimate which is tighter compared to previous upper norm bounds on the error.
In contrast to the proven upper error bounds given in Theorem~\ref{thm.upperbounderrorfull},
Corollary~\ref{cor.uppererrorboundreal} and~\ref{thm.upperbounderrorfull2}
the following quadrature estimates do not result in upper error bounds in general.
However, in many practical cases
such quadrature estimates turn out to be still reliable.

Here, some remarks on the defect are in order
to explain some subtleties with quadrature estimates for the defect integral
$ L_{p,m}(t) $.
We discuss a test problem with a skew-Hermitian matrix $A\in\C^{n\times n}$.
Following Remark~\ref{rmk:skewHermitiancaseLanczos} we choose $A=\ii B$ with a Hermitian matrix $B$,
in particulary, $B=\text{tridiag}(1,-2,1)\in\R^{n\times n}$ with $n=10\,000$.
The matrix $B$ is related to a finite difference discretization of the one-dimensional Laplacian operator
and $A$ corresponds to a free Schr{\"o}dinger type problem.
The eigenvalues $\sigma_j$, for $j=1,\ldots,n$, of $B$ are well studied, we obtain
\begin{equation}\label{eq.eigsdiscreteLaplace}
\sigma_j=4\sin(j\pi/(2(n+1)))^2
~~\text{with respective eigenvector $\psi_j\in\R^n$}.
\end{equation}
Here, $\mu_2(A)=0$, and the conditions of Corollary~\ref{thm:errorintexactarithmetic} hold.
For a given starting vector $v\in\C^n$
the time propagation for the discretized free Schr{\"o}dinger equation
is given by $\exp(tA)v$ and can be approximated by the Krylov propagator with $p=0$.
The following different cases for the starting vector $v$ will be discussed.
\begin{enumerate}[(a)]
\item\label{exdef.case1} Choose a random starting vector $v\in\R^n$.
\item\label{exdef.case2} Start close to a linear combination of eigenvectors,
$v = 10^6 \sum_{j=1}^{25} \psi_j  + \sum_{j=26}^n \psi_j$
for eigenvectors $\psi_j$ of the discretized Laplacian operator,~\eqref{eq.eigsdiscreteLaplace}.
\item\label{exdef.case3} Start close to a linear combination of eigenvectors which are more spread on the spectrum,
$v = 10^5 \sum_{j=1}^{20} \psi_j  + \sum_{j=21}^{n-20} \psi_j + 10^5 \sum_{j=n-19}^{n} \psi_j $
for eigenvectors $\psi_j$ of the discretized Laplacian operator,~\eqref{eq.eigsdiscreteLaplace}.
\end{enumerate}
In addition to the setting from~(\ref{exdef.case1})--(\ref{exdef.case3}) we normalize $v$, $\|v\|_2=1$.
The defect $\delta_{p,m}(t)$ for $p=0$ is computed in \textsc{Matlab},
using \texttt{expm} to evaluate the matrix exponential of
$H_m$ and divided differences for a fixed Krylov dimension $m=20$.

In Fig.~\ref{fig:skewHermitiandefect} we observe $|\delta_{p,m}(t)|=\Ono(t^{m-1})$ (for $t\to 0$)
up to $t\approx 10^{1}$ for the case~(\ref{exdef.case1})--(\ref{exdef.case3}).
The values of $|\delta_{p,m}(t)|$ in this time regime vary strongly between
among these cases.
We further remark that in the case~(\ref{exdef.case2}) for $t\geq 4\cdot 10^{1}$ the defect $|\delta_{p,m}(t)|$
behaves similar to the divided differences of the exponential over the first eigenvalues $\lambda_1^{(b)},\ldots,\lambda_4^{(b)}$
of $H_m$ with a proper prefactor.
This behavior occurs if eigenvalues of $H_m$ are clustered, in this case $\lambda_1^{(b)},\ldots,\lambda_4^{(b)}\approx 0$, and will be further discussed below, see Fig.~\ref{fig:ddclustermodel}.
For the case~(\ref{exdef.case3}) the eigenvalues of $H_m$ are clustered at $\approx 0$ and $ \approx 4 $.
Also in this case there is a time regime for which the defect behaves similar to a lower order function in $t$ with some additional oscillations.
(This may be explained by the existence of different eigenvalue clusters of the same size.)

As a conclusion from the example in Fig.3~\ref{fig:skewHermitiandefect},
we observe that quadrature of the defect 
can be relevant up to a time $t$
for which the quadrature based estimate of $\|l_{p,m}(t)\|_2$ (via the defect integral)
is equal to a given tolerance, see~\eqref{def.erroresttol}.
This regime of $t$ would depend on the choice of $\tol$ and additional factors such as $\beta$, $h_{m+1,m}$ etc.
which appear in the error bound from Corollary~\ref{thm:errorintexactarithmetic}.
Depending on parameters and the starting vector $v$ the defect can be highly oscillatory for relevant times $t$ and,
respectively, a quadrature estimate of the defect integral can be difficult to obtain.
Such effects seem to be relevant for special choices of starting vectors $v$, for example case~(\ref{exdef.case2}) and~(\ref{exdef.case3}).
The effect of $H_m$ having clustered eigenvalues and the prefactor used in Fig.~\ref{fig:skewHermitiandefect} ('$+$')
are explained in the following model problem, see Fig.~\ref{fig:ddclustermodel}.

\begin{figure}
\begin{tabular}{l}
\begin{overpic}
[width=0.75\textwidth]{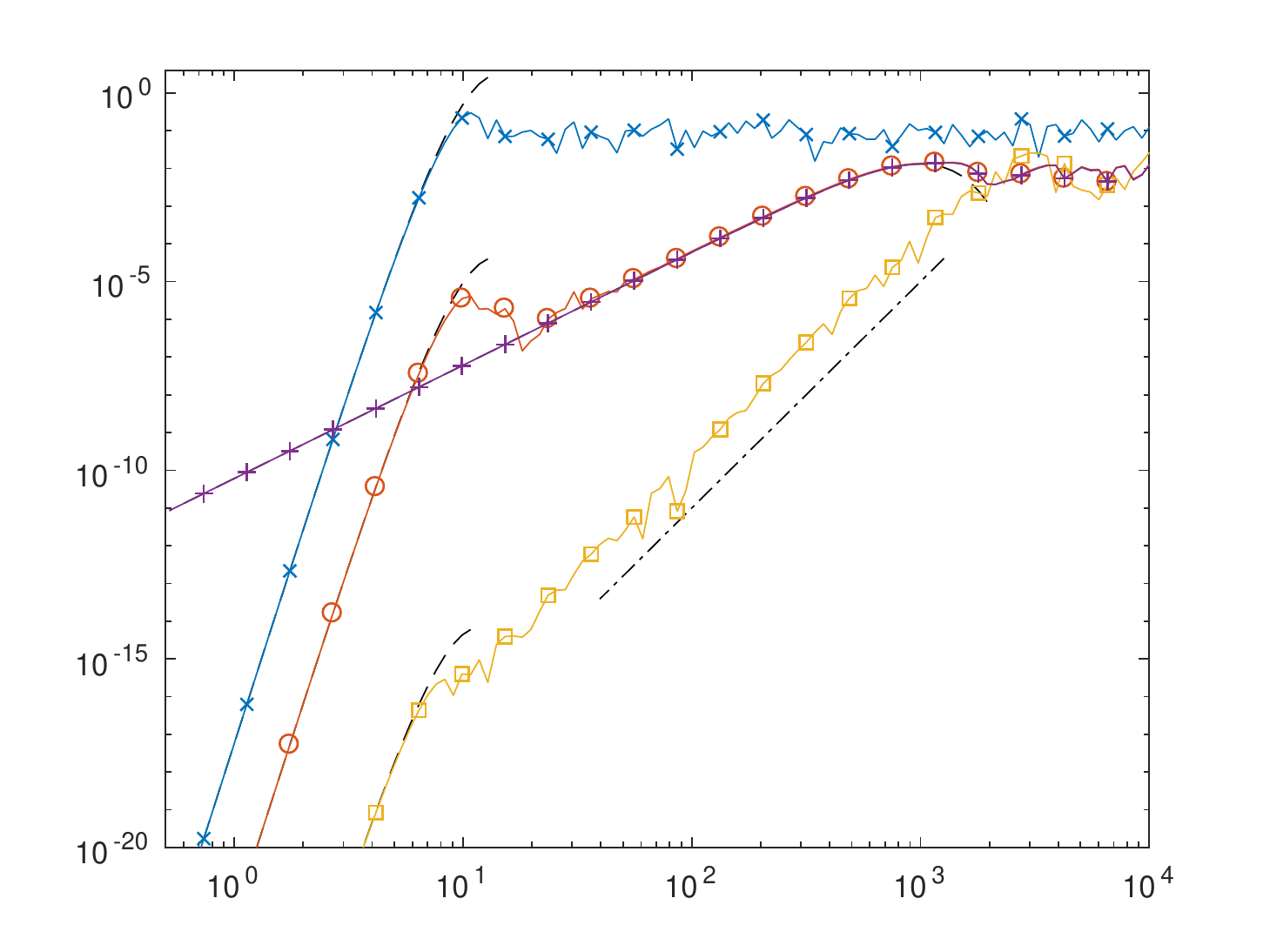}
\put(50,2){\small $t$}
\put(89,36){\begin{minipage}{5cm}
\small
$$
\begin{array}{|r|ccc|}
\hline
j & \lambda_j^{(a)} & \lambda_j^{(b)} & \lambda_j^{(c)}\\
\hline
1&    0.0002 &   0.0003  &  0.0001 \\
2&    0.0422 &   0.0026  &  0.0005 \\
3&    0.1360 &   0.0054  &  0.0013 \\
4&    0.2712 &   0.0108  &  0.0023 \\
5&    0.4743 &   0.3378  &  0.0032 \\
6&    0.6921 &   0.5763  &  0.0039 \\
7&    0.9440 &   0.8428  &  0.0054 \\
8&    1.2105 &   1.1343  &  0.9160 \\
9&    1.5049 &   1.4444  &  1.3768 \\
10&    1.8318 &   1.7660  &  1.7847 \\
11&    2.1456 &   2.0913  &  2.2385 \\
12&    2.4621 &   2.4124  &  2.6623 \\
13&    2.7540 &   2.7216  &  3.1348 \\
14&    3.0393 &   3.0112  &  3.9938 \\
15&    3.2997 &   3.2741  &  3.9961 \\
16&    3.5088 &   3.5038  &  3.9968 \\
17&    3.7091 &   3.6948  &  3.9977 \\
18&    3.8402 &   3.8423  &  3.9987 \\
19&    3.9510 &   3.9427  &  3.9995 \\
20&    3.9945 &   3.9935  &  3.9999 \\
\hline
\end{array}
$$
\end{minipage}}
\end{overpic}
\end{tabular}
\caption{The defect norm $|\delta_{p,m}(t)|$ ($p=0$, $m=20$) for the free Schr{\"o}dinger example with different choices of starting vector case~(\ref{exdef.case1}) ('$\times$'),
case~(\ref{exdef.case2}) ('$\circ$') and case~(\ref{exdef.case3}) ('$\Box$').
The table on the right-hand side shows eigenvalues $\lambda^{(\ast)}_1,\ldots,\lambda^{(\ast)}_m$ of $H_{m}$
for the different starting vectors, case~(\ref{exdef.case1})--(\ref{exdef.case3}).
For the case~(\ref{exdef.case2}) the divided differences over the clustered eigenvalues
$ \gamma_m  \big(\prod_{j=5}^{20} \lambda_j^{(b)}\big)^{-1} \exp_t[\lambda^{(b)}_1,\ldots,\lambda^{(b)}_4] $ is illustrated by ('$+$').
The asymptotic expansion of the divided differences for $t\to 0$ given in~\eqref{eq.defderivationvar} is illustrated using dashed lines.
The dash-dotted line is $\Ono(t^6)$.}
\label{fig:skewHermitiandefect}
\end{figure}

\paragraph{Divided differences with clustered nodes: an example.}
Choose $m=3$ with nodes $a_1=1.123,a_2=1.231,a_3=5.43$.
With this choice we obtain cluster of nodes, $a_1\approx a_2$.
For the given example we obtain $ | \exp_t[\ii a_2,\ii a_3] | \ll | \exp_t[\ii a_1,\ii a_2] | $
for $t$ large enough, hence,
using the recursive definition of the divided differences (see~\cite[eq. (B.24)]{Hi08} or others) we obtain
$$
| \exp_t[\ii a_1,\ii a_2,\ii a_3] | = \Big| \frac{ \exp_t[\ii a_2,\ii a_3] - \exp_t[\ii a_1,\ii a_2] }{a_3-a_1}\Big|
\approx  \Big| \frac{ \exp_t[\ii a_1,\ii a_2] }{a_3-a_1}\Big|,~~~\text{for larger $t$.}
$$
This example is illustrated in Fig.~\ref{fig:ddclustermodel}.
This behavior can be generalized for a larger number of nodes and is also
observed in Fig.~\ref{fig:skewHermitiandefect}.

\begin{figure}
\centering
\begin{overpic}
[width=0.8\textwidth]{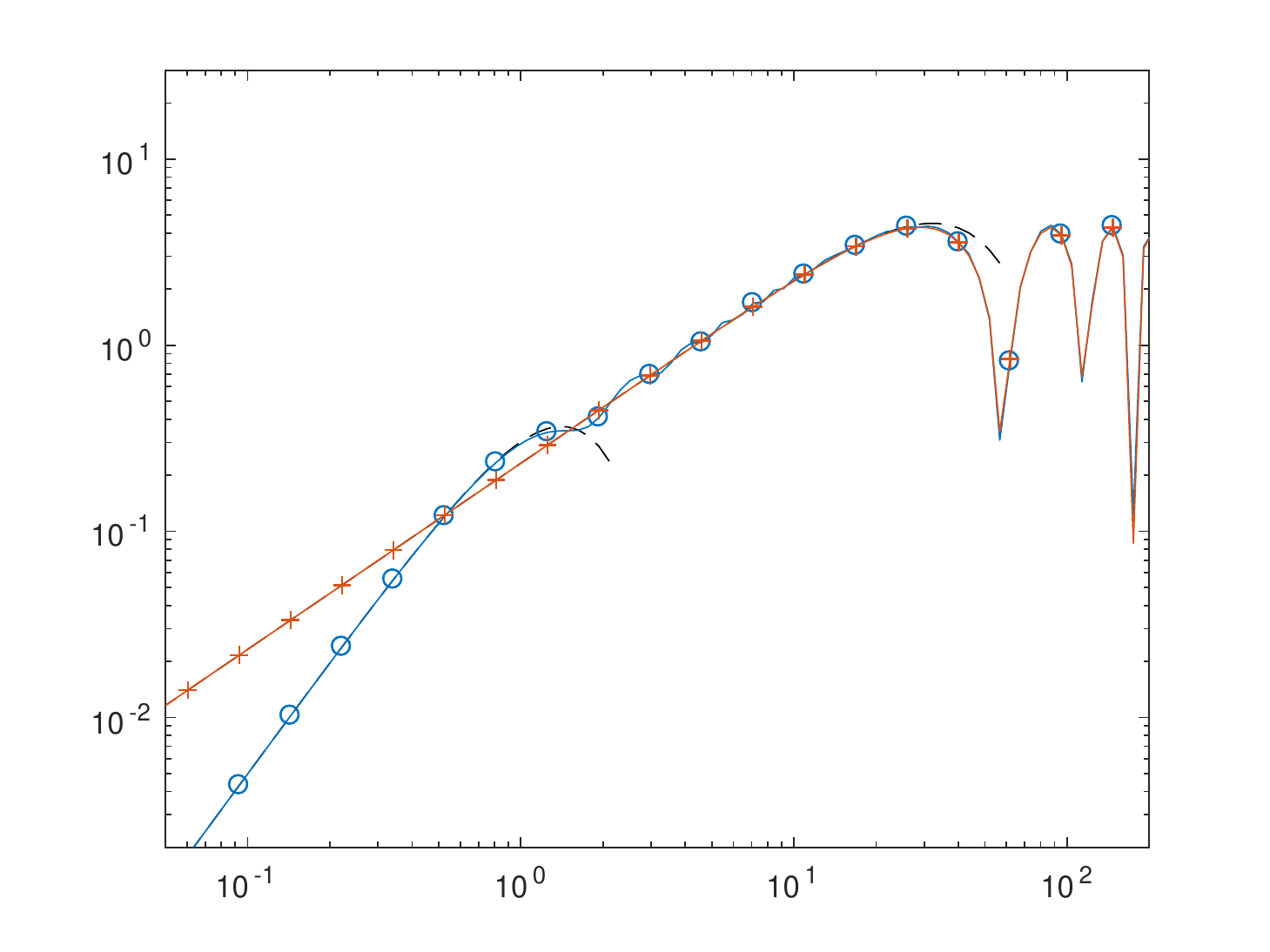}
\put(50,2){\small $t$}
\end{overpic}
\caption{The divided differences $| \exp_t[\ii a_1,\ii a_2,\ii a_3] |$ ('$\circ$')
and $| \exp_t[\ii a_1,\ii a_2] |/|a_3-a_1| $  ('$+$')
for the choice of $a_1,a_2,a_3$ given in the text.
The asymptotic expansion of the divided differences for $t\to 0$ given in~\eqref{eq.defderivationvar} is illustrated using dashed lines.}
\label{fig:ddclustermodel}
\end{figure}

\paragraph{Quadrature estimates for the defect integral.}
With the previous observations on the defect
we now discuss different quadrature-based estimates.

We can extend the result of the generalized residual estimate,
which was introduced in~\cite{HLS98}
and appeared in a similar manner in~\cite{DGK98,Sa92,Lu08,BGH13},
to $\varphi$-functions using the defect integral
according to Corollary~\ref{thm:errorintexactarithmetic}.
\begin{remark}[Generalized residual estimate, see also~\cite{HLS98}]\label{thm:generalizedresidualest}
Applying the right-endpoint rectangle rule we have
$$
\int_0^t |\delta_{p,m}(s)| \,\dd s \approx t|\delta_{p,m}(t)|,
$$
and with Corollary~\ref{thm:errorintexactarithmetic} (and $\delta_{p,m}(t)$ given in~\eqref{eq.defpm}) we obtain the error estimate
$$
\|l_{p,m}(t)\|_2 \approx h_{m+1,m} t^{1-p} |\delta_{p,m}(t)| = \beta h_{m+1,m} t |e_m^\ast \varphi_p(t H_m) e_1|  .
$$
\end{remark}
Assume that $\max_{s\in[0,t]} |\delta_{p,m}(t)|=|\delta_{p,m}(t)|$,
e.g. $|\delta_{p,m}(t)|$ is monotonically increasing in $t$. Then,
$$
\int_0^t |\delta_{p,m}(s)| \,\dd s
\leq t \max_{s\in[0,t]} |\delta_{p,m}(t)|
= t |\delta_{p,m}(t)|.
$$
In this case the generalized residual estimate from
Remark~\ref{thm:generalizedresidualest} results in an upper bound on the error norm.

In the most general case the defect is of a high order for $t\to0$ and in a relevant time regime,
see also Fig.~\ref{fig:skewHermitiandefect} case~(\ref{exdef.case1}) and previous remarks.
Then the defect is a higher order function and the right-endpoint quadrature does result in an upper bound but is not tight.
In this case we can improve the estimate by a prefactor
depending on the {\em effective order} defined in Appendix~\ref{sec.expansionofrho}.
If the defect is sufficiently smooth in a relevant time regime this results in a tight upper bound on the error norm.

\begin{remark}[Effective order estimate, see also~\cite{JAK19}]\label{thm:effectiveorderest}
Denote $f(t) = | \exp_t[\lambda_1,\ldots,\lambda_m,0_p] |$ for the time-dependent part of the defect
with eigenvalues $\lambda_1,\ldots,\lambda_m$ of $H_m$.
Assume $f(t)>0$ for a sufficiently small time regime $t>0$.
We consider the effective order $\rho(t)$ to be defined for the divided differences $f(t)$ as given in~\eqref{def:effectiveorder}.
With the following estimate for the integral of the defect,
$$
\int_0^t |\delta_{p,m}(s)| \,\dd s \approx \frac{t}{\rho(t)+1}|\delta_{p,m}(t)|,
$$
and from Corollary~\ref{thm:errorintexactarithmetic} (with $\delta_{p,m}(t)$ given in~\eqref{eq.defpm}) we obtain
$$
\|l_{p,m}(t)\|_2 \approx h_{m+1,m} \frac{ t^{1-p} }{ \rho(t)+1 } |\delta_{p,m}(t)|
= \beta h_{m+1,m} \frac{ t }{ \rho(t)+1 } |e_m^\ast \varphi_p(t H_m) e_1| .
$$
\end{remark}
In~\cite{JAK19} the effective order is defined for $|e_m^\ast \ee^{tH_m} e_1|$ ($p=0$)
which is equivalent to the definition via the divided differences of $f(t)$.
(This follows from Corollary~\ref{thm.defectequal}
and the definition of the effective order which is independent
of a constant prefactor.)

Some of the following observations already appeared in~\cite{JAK19}.
The quadrature scheme in Remark~\ref{thm:effectiveorderest}
is motivated by the following relation of the effective order and the integral of the divided differences $f(t)$.
From eq.~\eqref{def:effectiveorder},
\begin{equation*}
f(t)=\frac{f'(t)\,t}{\rho(t)}.
\end{equation*}
Integration and application of the mean value theorem shows the existence of $t^\ast\in[0,t]$ with
\begin{equation*}
\int_0^t f(s)\,\dd s= \frac{1}{\rho(t^\ast)} \int_0^t f'(s)\,s\,\dd s,
\end{equation*}
and integration by parts gives
\begin{equation}\label{eq.intdefefforder}
\int_0^t f(s) \,\dd s = \frac{t f(t)}{1+\rho(t^\ast)}.
\end{equation}
This result can passed over to the integral of the defect.

Assume the effective order is monotonically decreasing in for $t$ small enough,
$\min_{s\in(0,t]}\rho(s) = \rho(t) \geq 0$.
This holds in an asymptotic regime for the dissipative case up to round-off,
see also Theorem~\ref{thm.rho12asymptotic} with the real parts $\xi_1,\ldots,\xi_m$
of the eigenvalues of $H_m$ being non-positive.
With~\eqref{eq.intdefefforder} and the assumption
$0\leq \rho(t) \leq \rho(s) \leq m+p-1 =\rho(0+)$ for $s\in[0,t]$,
we inclose the integral of the defect by
\begin{equation}\label{defintbounds}
\tfrac{t}{m}\,|\delta_{p,m}(t)| \leq \int_0^t |\delta_{p,m}(s)| \,\dd s
\leq \tfrac{t}{\rho(t)+1}\,|\delta_{p,m}(t)| \leq t\,|\delta_{p,m}(t)|.
\end{equation}
Combining~\eqref{defintbounds} and Corollary~\ref{thm:errorintexactarithmetic}
we obtain the upper bound
$$
 \|l_{p,m}(t)\|_2\leq \tfrac{ h_{m+1,m} t^{1-p} }{\rho(t)+1}\cdot|\delta_{p,m}(t)| \leq  h_{m+1,m} t^{1-p} \cdot|\delta_{p,m}(t)|.
$$
A computable expression for the effective order was given in~\cite[eq. (6.10)]{JAK19}.
This result can be generalized to the case $p\in\N_0$,
\begin{align*}
\rho(t) =
\left\{
\begin{array}{ll}
t \real\big( (H_m)_{m,m} + (H_m)_{m,m-1} (y_{p,m}(t))_{m-1}/(y_{p,m}(t))_m \big)~~&\text{for}~\,p=0,~~\text{and}\\
\real( (y_{p-1,m}(t))_{m}/(y_{p,m}(t))_m )~~&\text{for}~\,p\in\N,
\end{array}\right.
\end{align*}
with $y_{p,m}(t)\in\C^m$ from~\eqref{eq.varphi}.
The expression for the case $p\in\N$ can be obtained by~\cite[eq. (6.10)]{JAK19}
applied on the representation $|e_{m+p}^\ast \ee^{t\widetilde{H}_m} e_1|$ for the defect
(\ref{thm.defectequal.extendH}. in Corollary~\ref{thm.defectequal})
and making use of the special structure of $\widetilde{H}_m$,
$\beta e_{m+p}^\ast \ee^{t\widetilde{H}_m} e_1 = t^p (y_{p,m}(t))_m$ (see Corollary~\ref{thm.defectequal})
and $\beta e_{m+p-1}^\ast \ee^{t\widetilde{H}_m} e_1 = t^{p-1} (y_{p-1,m}(t))_m$ (see~\cite[Corollary 1]{Si98}).

As illustrated in Fig.~\ref{fig:skewHermitiandefect} the defect can be highly oscillatory in a relevant time regime,
especially for specific starting vectors,
and in this case the quadrature estimates should be handled with care.

\subsection{A stopping criterion for lucky breakdown.}\label{sec.luckybreakdown}

The special case $h_{k+1,k}=0$ during the construction of the Krylov subspace
is considered to be a {\em lucky breakdown,}
a breakdown of the Arnoldi or Lanczos iteration with the benefit of an exact approximation of $\varphi_p(tA)v$ for any $t>0$
via the Krylov subspace $\Kry_k(A,v)$.
In floating point arithmetic the lucky breakdown results in $h_{k+1,k}\approx 0$
and can lead to stability issues if the Arnoldi or Lanczos method is not stopped properly.
The condition that the Krylov propagator is exact is not exactly determinable
in floating point arithmetic but
can be weakened to the error condition in~\eqref{def.erroresttol} for a given tolerance~$\tol$ per unit step.
With this approach we introduce a stopping criterion
which can be applied on the fly to detect a lucky breakdown
and satisfies an error bound.
This does not depend on any a~priori information as long the tolerance~$\tol$
is chosen properly so that round-off errors can be neglected,
see remarks before Corollary~\ref{thm:errorintexactarithmetic}.

\begin{proposition}\label{thm.luckybreakdownstop}
Let $\mu_2(A)\leq 0$ and assume that round-off errors are sufficiently small, see Corollary~\ref{thm:errorintexactarithmetic}.
Let \,$\tol$ be a given tolerance and 
\begin{equation}\label{eq.stoppingcriteria}
\frac{\beta h_{k+1,k}}{(p+1)!}\leq \tol
\end{equation}
be satisfied at the $k$-th step of the Arnoldi or Lanczos iteration.
Then the iteration can be stopped and the Krylov subspace $\Kry_k(A,v)$
can be used to approximate the vector $\varphi_p(tA)v$
with a respective error per unit step $\|l_{p,k}(t)\|_2 \leq t\cdot \tol$.
\end{proposition}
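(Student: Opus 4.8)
The plan is to certify the claimed error bound directly from the defect integral, but deliberately using the \emph{coarsest} available estimate of the defect rather than the sharp divided-difference bounds of Theorem~\ref{thm.upperbounderrorfull} and its corollaries. The reason for this choice is conceptual and is the real content of the proof: a lucky breakdown should guarantee accuracy \emph{for every} $t>0$, so a stopping test serving this purpose must control the error per unit step uniformly in $t$. The sharp bound of Corollary~\ref{thm.upperbounderrorfull2} behaves like $\gamma_k t^{k}/(k+p)!$, so after division by $t$ it still grows like $t^{k-1}$ and cannot furnish a $t$-independent criterion. What is needed instead is a bound that is merely \emph{linear} in $t$; and since $h_{k+1,k}\approx 0$ at a (near) breakdown, even such a crude linear bound already suffices to certify the tolerance in~\eqref{eq.stoppingcriteria}.

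First I would invoke Corollary~\ref{thm:errorintexactarithmetic}, which under $\mu_2(A)\leq 0$ and negligible round-off gives $\|l_{p,k}(t)\|_2 \leq L_{p,k}(t)$, with $L_{p,k}(t)=\frac{h_{k+1,k}}{t^p}\int_0^t|\delta_{p,k}(s)|\,\dd s$. Next I would bound the integrand pointwise, discarding the fine structure of the defect: since $\delta_{p,k}(s)=\beta s^p e_k^\ast\varphi_p(sH_k)e_1$ and $|e_k^\ast x|\leq\|x\|_2$, one has $|\delta_{p,k}(s)|\leq \beta s^p\|\varphi_p(sH_k)e_1\|_2$. The remaining integral is then estimated exactly as in the proof of Theorem~\ref{thm:errorint}, eqs.~\eqref{eq.interrestnormbound2}--\eqref{eq.interrestnormbound3}, yielding $\int_0^t s^p\|\varphi_p(sH_k)e_1\|_2\,\dd s\leq \kappa_2\, t^{p+1}/(p+1)!$ with $\kappa_2=\max_{s\in[0,t]}\|\ee^{sH_k}\|_2$. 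Combining these estimates gives
\begin{equation*}
L_{p,k}(t)\leq \frac{\beta h_{k+1,k}\,\kappa_2\, t}{(p+1)!}.
\end{equation*}

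To conclude I would eliminate the factor $\kappa_2$. In the dissipative case with negligible round-off one has $\mu_2(H_k)\leq C_3\varepsilon$ and hence $\kappa_2\leq \ee^{tC_3\varepsilon}\approx 1$, cf.\ the discussion following~\eqref{eq.definterrorfull}; within the round-off regime assumed in~\eqref{eq.definterrorcondition} this yields $L_{p,k}(t)\leq \beta h_{k+1,k}\,t/(p+1)!$. Applying the hypothesis~\eqref{eq.stoppingcriteria} then produces
\begin{equation*}
\|l_{p,k}(t)\|_2 \leq L_{p,k}(t)\leq \frac{\beta h_{k+1,k}}{(p+1)!}\,t \leq t\cdot\tol,
\end{equation*}
which is the asserted error per unit step, valid for every admissible $t$. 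The only point requiring genuine care — the main, if modest, obstacle — is precisely the treatment of $\kappa_2$: because only $\mu_2(H_k)\leq C_3\varepsilon$ is available rather than $\mu_2(H_k)\leq 0$, the prefactor $\ee^{tC_3\varepsilon}$ must be argued to remain negligibly close to $1$ over the relevant range of $t$, which is exactly the negligible-round-off regime already underlying Corollary~\ref{thm:errorintexactarithmetic}.
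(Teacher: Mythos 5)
Your proposal is correct and follows essentially the same route as the paper's own proof: both pass through Corollary~\ref{thm:errorintexactarithmetic}, bound the defect pointwise by $|\delta_{p,k}(s)|\leq\beta s^p\|\varphi_p(sH_k)e_1\|_2$, control $\|\varphi_p(sH_k)e_1\|_2$ via the integral representation~\eqref{defphiint} and $\|\ee^{sH_k}\|_2\leq\ee^{t\mu_2(H_k)}$ (equal to $1$ up to round-off), and integrate to get $\beta h_{k+1,k}t/(p+1)!$. Your explicit handling of the $\kappa_2=\ee^{tC_3\varepsilon}$ prefactor is a slightly more careful rendering of what the paper dismisses as "up to round-off," but it is the same argument.
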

\begin{proof}
We use the upper bound on the error norm from Corollary~\ref{thm:errorintexactarithmetic},
\begin{equation}\label{eq.errestluckyerrorint}
 \|l_{p,k}(t)\|_2 \leq \frac{ h_{k+1,k}}{t^p}\int_0^t |\delta_{p,k}(s)| \,\dd s.
\end{equation}
To obtain a uniform bound on the defect integral we use
\begin{equation}\label{eq.deftophinorm}
|\delta_{p,k}(t)| \leq \beta t^p \|e_k\|_2 \|\varphi_p(t H_k) e_1\|_2
= \beta t^p \|\varphi_p(t H_k) e_1\|_2.
\end{equation}
\begin{itemize}
\item
For $p>0$ we apply the integral representation~\eqref{defphiint} on $\varphi_p(t H_m) e_1$
to obtain the upper bound
\begin{equation}\label{eq.interrestnormbound3x}
\|\varphi_p(t H_m) e_1\|_2
\leq \frac{\max_{s\in[0,t]} \|\ee^{s H_m}\|_2}{(p-1)!} \int_0^1\theta^{p-1}\,\dd\theta
= \frac{\max_{s\in[0,t]} \|\ee^{s H_m}\|_2 }{p!}.
\end{equation}
\item
For $p=0$ the analogous result is directly obtained:
Combine~\eqref{eq.deftophinorm} and~\eqref{eq.interrestnormbound3x}
with $\|\ee^{sH_k}\|_2 \leq \ee^{t\mu_2(H_k)} \leq \ee^{t\mu_2(A)}$ up to round-off
and $\mu_2(A)\leq 0$, giving
$$
|\delta_{p,k}(t)| \leq \beta \frac{t^p }{p!},~~~\text{and}~~\int_0^t |\delta_{p,k}(s)| \,\dd s \leq \beta \frac{t^{p+1} }{(p+1)!}.
$$
\end{itemize}
Together with~\eqref{eq.errestluckyerrorint} and~\eqref{eq.stoppingcriteria} we conclude $\|l_{p,k}(t)\|_2 \leq t\cdot \tol$.
\qed
\end{proof}


\section{Numerical experiments}\label{sec.numexp}
The notation for the error $l_{p,m}(t)$, the estimate of the error norm $\zeta_{p,m}(t)$ and the tolerance $\tol$
have been introduced in~\eqref{def.errorpm} and~\eqref{def.erroresttol}.
The notation $\zeta_{p,m}$ will be used for different choices of error estimates discussed in the previous section.
Theorem~\ref{thm.upperbounderrorfull} and Corollary~\ref{thm.upperbounderrorfull2}
result in upper bounds on the error norm, $ \|l_{p,m}(t)\|_2 \leq \zeta_{p,m}(t) $ .
The quadrature-based error estimates given in Remark~\ref{thm:generalizedresidualest} and~\ref{thm:effectiveorderest}
result in estimates for the error norm, $ \|l_{p,m}(t)\|_2 \approx \zeta_{p,m}(t) $,
and with additional conditions also give upper bounds.
For a fixed tolerance $\tol$ we use the notation $t(m)$
for the smallest time $t$ with $\zeta_{p,m}(t) = t\cdot \tol$, see~\eqref{def.erroresttol}.
This choice of $t(m)$ helps us to verify the tested error estimates for a time $t$ which is of the most practical interest.
With the help of a reference solution the true error norm per unit step can be tested by $ \|l_{p,m}(t(m))\|_2 / t(m) $.
For the numerical experiments we focus on the most prominent case $p=0$
and simplify the notation by writing $\delta_m(t) = \delta_{0,m}(t)$, $l_{m}(t) = l_{0,m}(t)$ and $\zeta_{m}(t) = \zeta_{0,m}(t)$.

\subsection{Convection-diffusion equation}\label{sec.numexp.cv}
Consider the following two-dimensional convection-diffusion equation with
$t\geq 0$ and $x\in[0,1]^2$,
\begin{equation}\label{eq.cdcont}
\partial_t u = L u,~~~\text{with}~~L= \Delta + \nu (\partial_{x_1} + \partial_{x_2} ) ,~~~u=u(t,x),~\nu\in\R.
\end{equation}
Let $A\in\R^{n\times n}$ be obtained by the two-dimensional finite difference discretization of the operator $L$ in~\eqref{eq.cdcont}
with zero dirichlet boundary conditions and $N=500$ inner mesh points in each spatial direction,
hence, $n=N^2$.
This test problem is similar to other convection-diffusion equations
appearing in the study of Krylov subspace methods, see also~\cite{JAK19,EE06,FGS14a,BK19} and others.

The symmetric case with $\nu=0$ results in the Heat equation
and has already been discussed in~\cite{JAK19}.
For the convection parameter we choose $\nu=100,500$ which results in a non-normal matrix $A$.
Considering the spectrum of $A$ the case $\nu=100$ is closer to the Hermitian case
and $\nu=500$ is closer to the skew-Hermitian case.
We remark that for the real matrix $A\in\R^{n\times n}$ the terms Hermitian and symmetric can be used in an equivalent manner.
In both cases the numerical range of $A$ is in the left complex plane, $ \mu_2(A)\leq 0 $.

We discuss error estimates for the case $p=0$, hence, $\ee^{tA}v$
is approximated in the Krylov subspace $\Kry_m(A,v)$, see~\eqref{eq.Krylovprpagatorexp}.
As a starting vector we choose the normalized vector $v=(1,\ldots,1)^\ast\in\R^n$.
The error estimates given in Theorem~\ref{thm.upperbounderrorfull}, Corollary~\ref{thm.upperbounderrorfull2}
and Remark~\ref{thm:generalizedresidualest} and~\ref{thm:effectiveorderest} are compared
for this problem in Fig.~\ref{fig:numexcv100u500}.

For the case $\nu=100$ the eigenvalues of $H_m$ have a negligible imaginary part
and the upper bound given in Theorem~\ref{thm.upperbounderrorfull} shows tight results.
For $\nu=500$ and larger choices of $m$ this bound is less tight.
The criterion $\text{ac.est.}1(t)$ given in Remark~\ref{rmk.isrealpartboundtight}
is evaluated for $\nu=100,500$ with $t(m)$ corresponding to Theorem~\ref{thm.upperbounderrorfull} (see caption of Fig.~\ref{fig:numexcv100u500}).
For $\nu=100$ we obtain $\text{ac.est.}1(t(m))<0.1$ for any $m$ tested
and for $\nu=500$ the smallest $m$ with $\text{ac.est.}1(t(m))>0.1$ is $m=45$.
The upper bound of Corollary~\ref{thm.upperbounderrorfull2} is applied with $\xi_{\max}=0$
(the effect of $\xi_{\max}$ is negligible in this case).
Similar to the criterion $\text{ac.est.}1(t)$, we test $\text{ac.est.}2(t)$ given in Remark~\ref{rmk.isEratight}
for $t(m)$ corresponding Corollary~\ref{thm.upperbounderrorfull2}.
The smallest $m$ with $\text{ac.est.}2(t(m))>0.1$ is $m=9$ and $m=10$ for $\nu=100$ and $\nu=500$, respectively.
The quadrature-based error estimates given in Remark~\ref{thm:generalizedresidualest} and~\ref{thm:effectiveorderest}
and both result in upper bounds on the error norm for this example,
whereas the effective order estimate in Remark~\ref{thm:effectiveorderest} results in a tighter bound.

\begin{figure}
\centering
\begin{tabular}{cc}
\hspace{0.85cm}
\begin{overpic}
[width=0.48\textwidth]{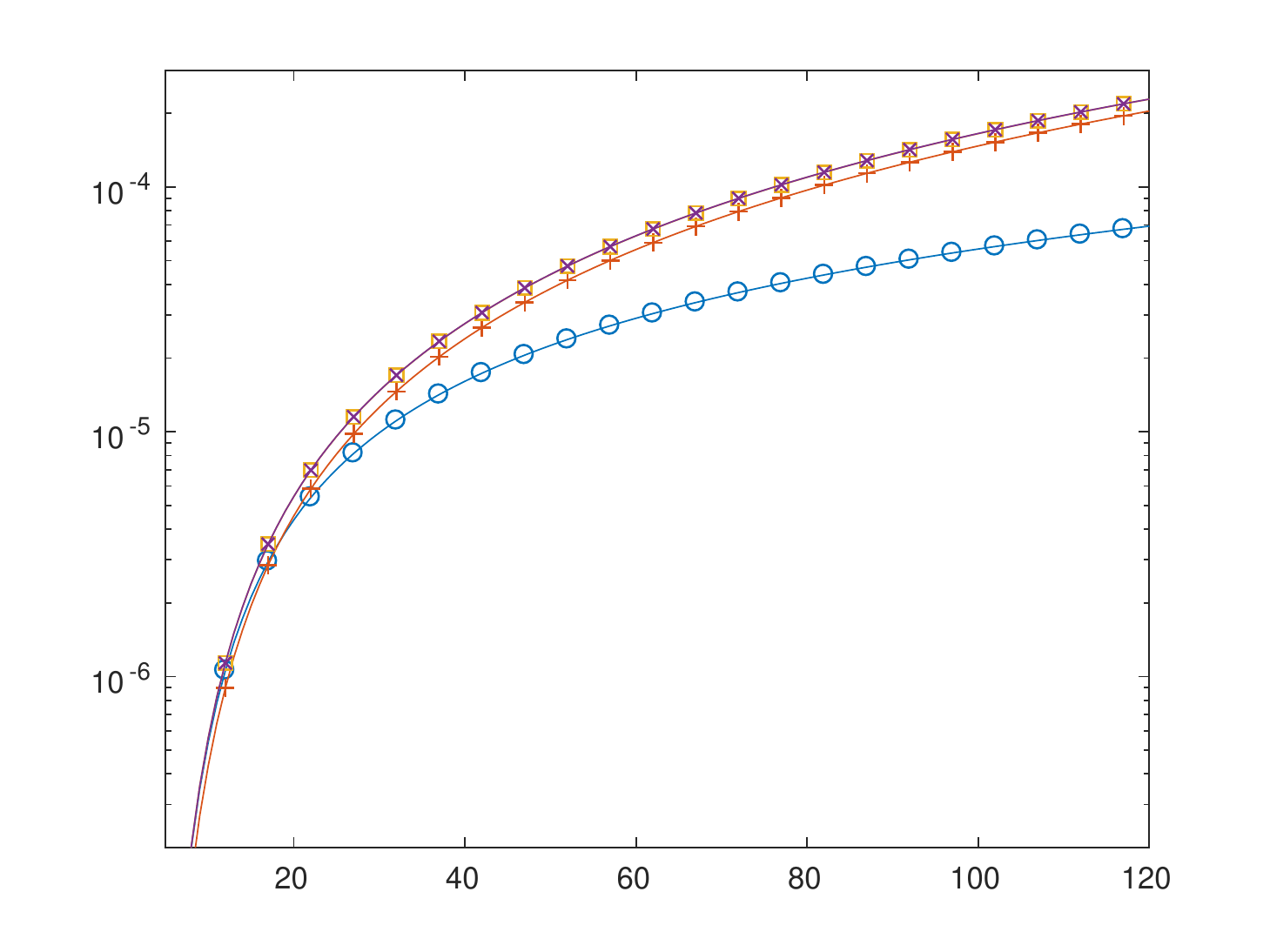}
\put(41,73){ $\nu=100$}
\put(-5,35){\small $t(m)$}
\end{overpic}
&
\hspace{-0.8cm}
\begin{overpic}
[width=0.48\textwidth]{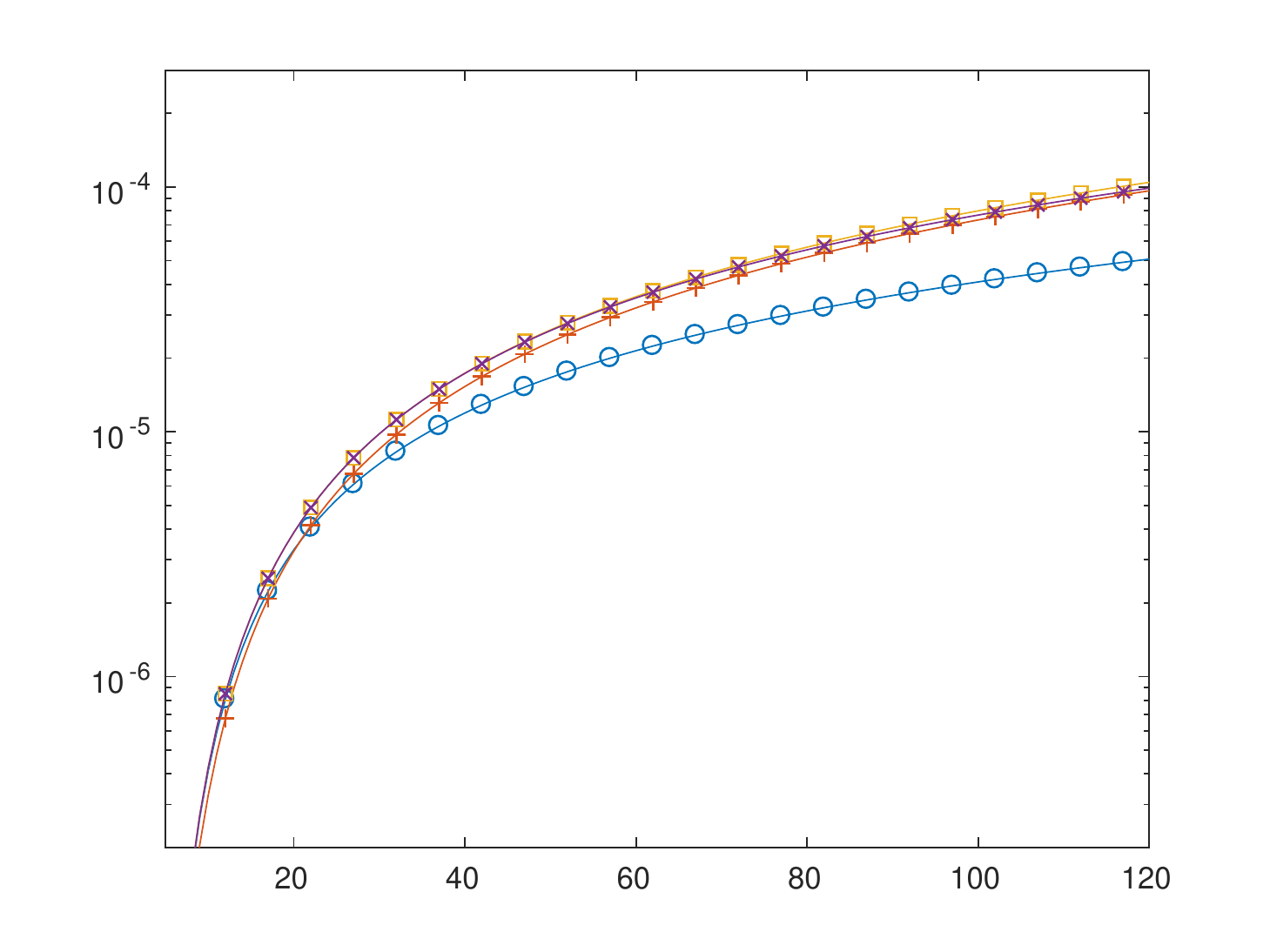}
\put(41,73){ $\nu=500$}
\end{overpic}
\\
\hspace{0.85cm}
\begin{overpic}
[width=0.48\textwidth]{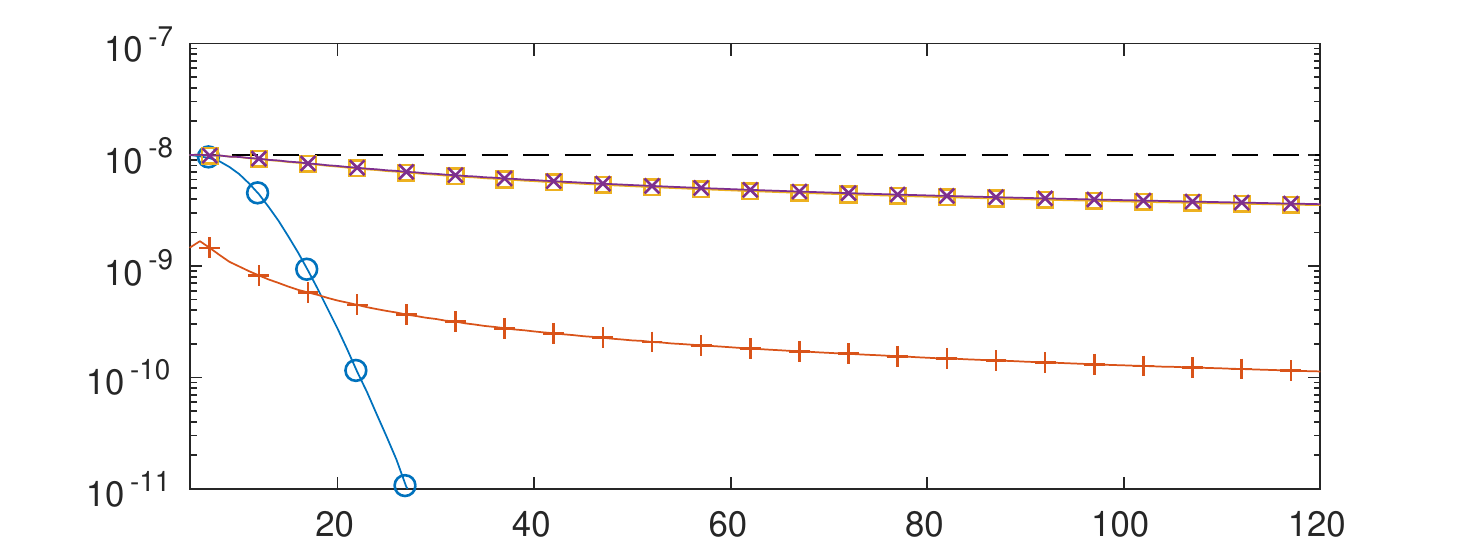}
\put(-15,15){\small $\frac{\|l_m(t(m))\|_2}{t(m)}$}
\put(45,28){\small $\tol=10^{-8}$}
\put(50,-3){\small $m$}
\end{overpic}
&
\hspace{-0.8cm}
\begin{overpic}
[width=0.48\textwidth]{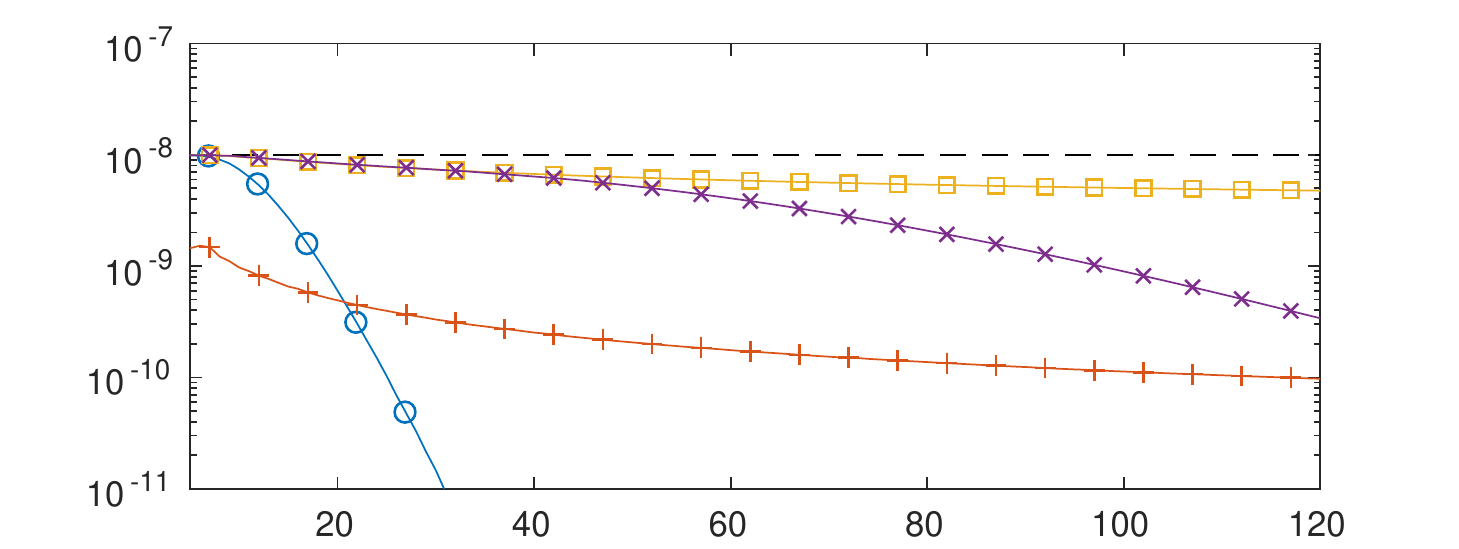}
\put(45,28){\small $\tol=10^{-8}$}
\put(50,-3){\small $m$}
\end{overpic}
\end{tabular}
\caption{The convection-diffusion problem~\eqref{eq.cdcont} for the parameter $\nu=100$ (left) and $\nu=500$ (right).
The top row shows the time $t(m)$ which is the smallest $t$ so that $ \zeta_{m}(t) = t\cdot \tol $
for $\tol=10^{-8}$ and $\zeta_{m}$
being the upper norm bound given in Theorem~\ref{thm.upperbounderrorfull} ('$\times$'),
Corollary~\ref{thm.upperbounderrorfull2} ('$\circ$'), the generalized residual estimate given in Remark~\ref{thm:generalizedresidualest} ('$+$')
and the effective order estimate given in Remark~\ref{thm:effectiveorderest} ('$\Box$').
The bottom row shows the true error per unit step, $\|l_{m}(t(m))\|_2/t(m)$, for the
time $t(m)$ as chosen above.}
\label{fig:numexcv100u500}
\end{figure}

\subsection{Free Schr{\"o}dinger equation with a double well potential}\label{sec.numexp.fS}
Consider the one-dimensional free Schr{\"o}dinger equation with a double well potential.
\begin{equation}\label{eq.schrocont}
\partial_t \psi= -\ii H\psi,~~~\text{with}~~H = \Delta + V,~~~\psi=\psi(t,x)\in\C,~V=V(x)\in\R,
\end{equation}
for $t\geq 0$, $x\in[-10,10]$ and $V(x) = x^4-15 x^2$.
Let $B\in\C^{n\times n}$ be the discretized version of the Hamiltonian operator $H$ in~\eqref{eq.schrocont}
with periodic boundary conditions using a finite difference scheme with a mesh of size $n=10000$.
With $B$ Hermitian, the full problem $A=-\ii B$ is skew-Hermitian
(see Remark~\ref{rmk:skewHermitiancaseLanczos}) and we obtain $\mu_2(A)=0$.
For the initial state of~\eqref{eq.schrocont} we choose a Gaussian wavepacket,
$$
\psi(t=0,x)=(0.2\pi)^{-1/4}\exp(-(x+2.5)^2/(0.4)),
$$
which is evaluated on the mesh and normalized to obtain a discrete starting vector $v\in\R^n$.
This problem also appears in~\cite{IKS19,Si19}.

Similar to the previous subsection we discuss error estimates for the case $p=0$, hence, the Krylov approximation of $ \ee^{-\ii t B} v $.
The implementation of the skew-Hermitian problem is described in Remark~\ref{rmk:skewHermitiancaseLanczos}.
In Fig.~\ref{fig:numexfS} the upper bound given in Theorem~\ref{thm.upperbounderrorfull} and Corollary~\ref{thm.upperbounderrorfull2},
which coincidence in the skew-Hermitian case,
and the error estimates given in Remark~\ref{thm:generalizedresidualest} and~\ref{thm:effectiveorderest} are compared.
With our choice starting vector the matrix $H_m$ does have clustered eigenvalues, see also Subsection~\ref{sec.quadest}.
The defect $\delta_{m}(t)$, which is presented in the lower right corner of Fig.~\ref{fig:numexfS},
does have an oscillatory behavior which is luckily not in the relevant time regime.
Therefore, with our choice of tolerance $\tol=10^{-8}$ the quadrature-based error estimates are still valid.
In other cases this oscillatory behavior of the defect can lead to failure of the error estimates
given in Remark~\ref{thm:generalizedresidualest} and~\ref{thm:effectiveorderest}.
The upper bound given in Corollary~\ref{thm.upperbounderrorfull2} is reliable but not tight for this example
which can also be explained by the loss of order of the defect caused by the starting vector.

\begin{figure}
\hspace{0.36cm}
\begin{tabular}{l}
\begin{overpic}
[width=0.65\textwidth]{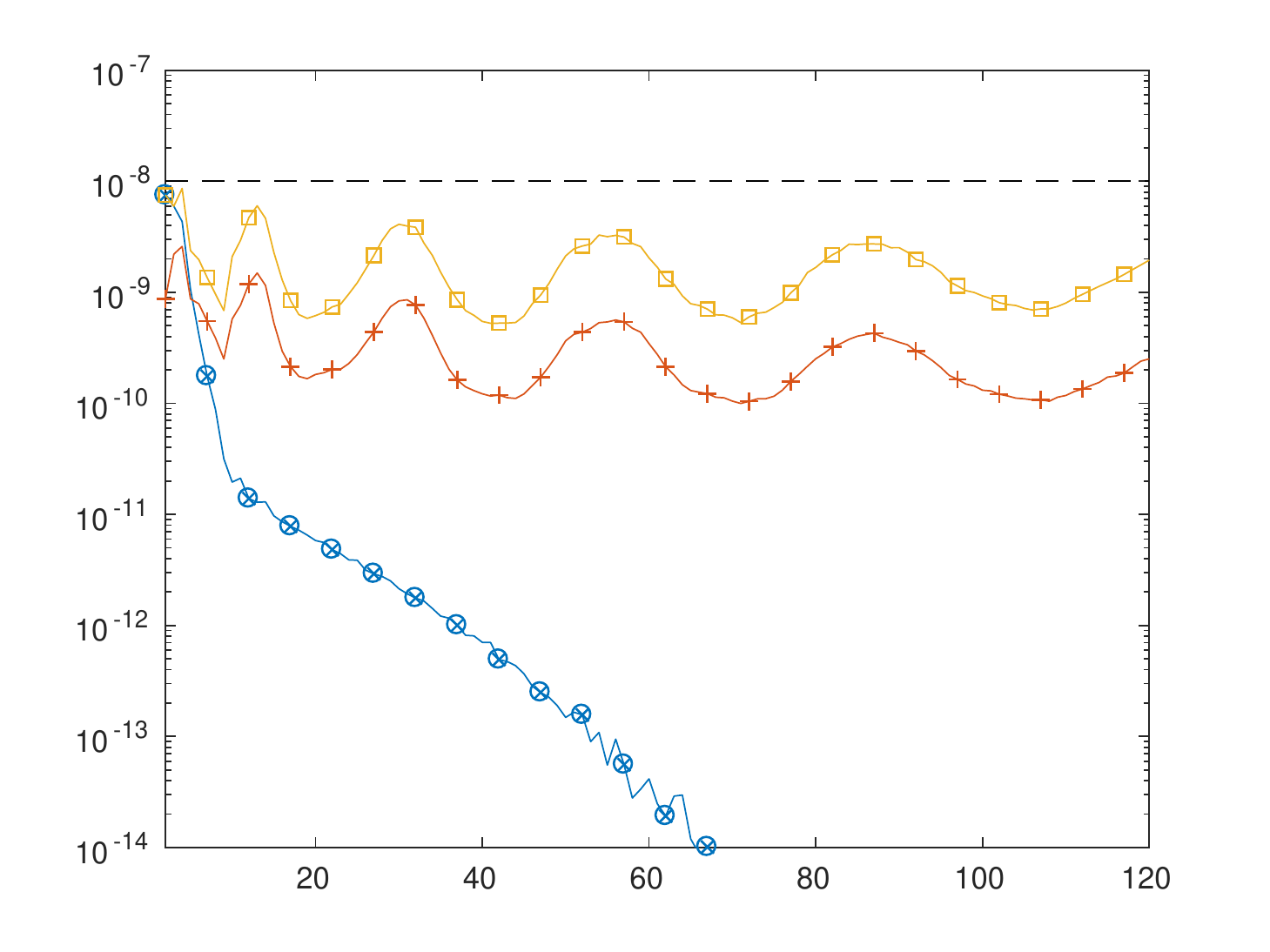}
\put(45,62){\small $\tol=10^{-8}$}
\put(50,0){\small $m$}
\put(-7,37){\small $\frac{\|l_m(t(m))\|_2}{t(m)}$}
\put(103,5){\includegraphics[width=0.29\textwidth]{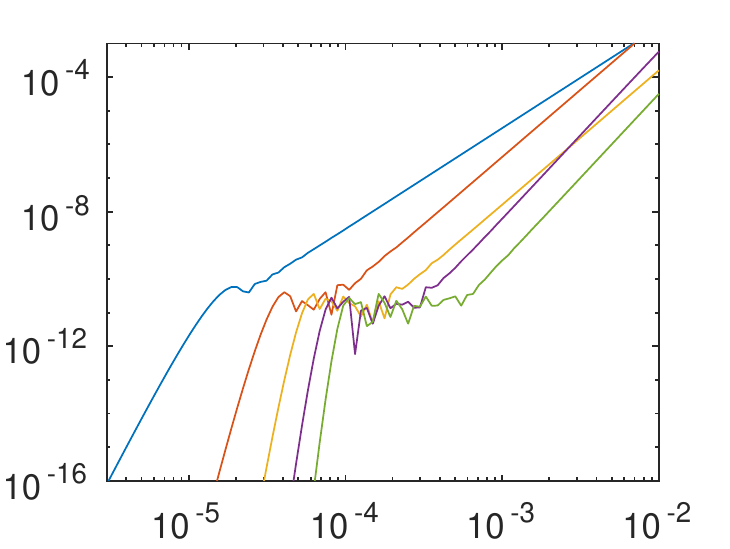}}
\put(127,1){\small $t$}
\put(96,21){\small $\delta_m(t)$}
\put(118,21){\rotatebox{30}{\tiny $10$}}
\put(123,21){\rotatebox{35}{\tiny $20$}}
\put(127,21){\rotatebox{35}{\tiny $30$}}
\put(130.7,21){\rotatebox{40}{\tiny $40$}}
\put(133,21){\rotatebox{40}{\tiny $50$}}
\put(103,39){\includegraphics[width=0.29\textwidth]{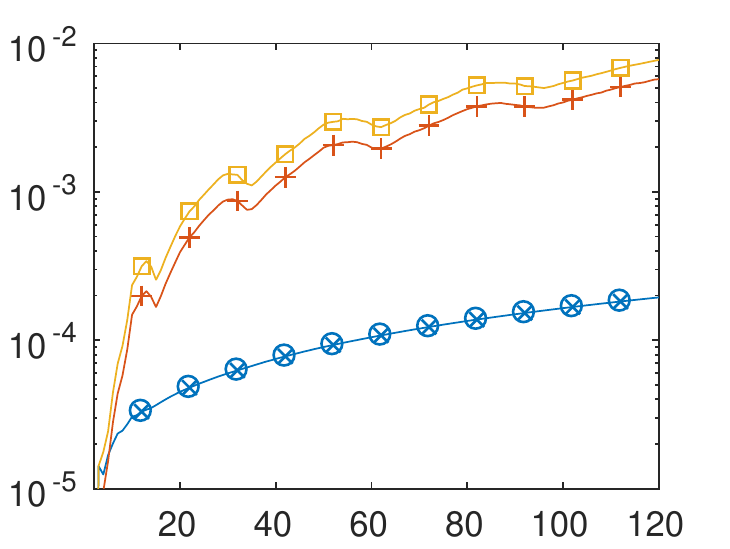}}
\put(127,37){\small $m$}
\put(96,55){\small $t(m)$}
\end{overpic}
\end{tabular}
\caption{Results for the free Schr{\"o}dinger problem with a double well potential.
This Figure shows the time $t(m)$ (top right), which is the smallest $t$ so that $ \zeta_{m}(t) = t\cdot \tol $
for $\tol=10^{-8}$, the true error per unit step (left) $\|l_{m}(t(m))\|_2/t(m)$
and the defect $\delta_m(t)$ (bottom right) for $m\in\{10,20,30,40,50\}$.
The results for $t(m)$ and $\|l_{m}(t(m))\|_2/t(m)$ are given for $\zeta_{m}$
being the upper norm bound given in Theorem~\ref{thm.upperbounderrorfull} ('$\times$'),
Corollary~\ref{thm.upperbounderrorfull2} ('$\circ$'), the generalized residual estimate given in Remark~\ref{thm:generalizedresidualest} ('$+$')
and the effective order estimate given in Remark~\ref{thm:effectiveorderest} ('$\Box$').
The results for Theorem~\ref{thm.upperbounderrorfull} ('$\times$') and
Corollary~\ref{thm.upperbounderrorfull2} ('$\circ$') coincidence in the skew-Hermitian case.
}
\label{fig:numexfS}
\end{figure}
\section{Conclusions and outlook}

In this work various a~posteriori bounds and estimates on the error norm,
which have their origin in an integral representation
of the error using the defect (residual), are studied.
We have characterized the accuracy of these error bounds
by the positioning of Ritz values (i.e., eigenvalues of $H_m$)
on the complex plane.
The case of real Ritz values is the most favorable one
to obtain a tight error bound via an integral on the defect norm
(Corollary~\ref{cor.uppererrorboundreal}).
A new error bound (Theorem~\ref{thm.upperbounderrorfull}) has shown to be tight
if Ritz values are close to the real axis
and in this case favorably compares with existing error bounds.
We further recapitulate an existing error bound
(Corollary~\ref{thm.upperbounderrorfull2})
which remains relevant, especially for the case of
Ritz values with a significant imaginary part.
In addition for the error bound
in Theorem~\ref{thm.upperbounderrorfull} and Corollary~\ref{thm.upperbounderrorfull2},
we have provided a criterion to quantify the achieved accuracy on the run.
For an illustration of the claims concerning the new error bound
we primary refer to the numerical example
given in Subsection~\ref{sec.numexp.cv}.
The quadrature-based error estimates in Subsection~\ref{sec.quadest}
(e.g. the generalized residual estimate)
do not yield proven upper bounds on the error norm
and we adressed special cases (e.g. the numerical example in Subsection~\ref{sec.numexp.fS})
for which the reliability of these estimates can be problematic.
These cases are also analyzed in terms of Ritz values in Subsection~\ref{sec.quadest}
and this relation can be of further interested for a numerical implementation.
Nevertheless, in most cases the quadrature-based estimates remain valid,
whereat the effective order quadrature stands out in terms of performance.

We also remark that the theory provided in our work
gives the possibility to adapt the choice of the error estimate
on the fly to obtain an estimate which is as reliable, accurate and
economic as possible.
A numerical implementation is the topic of further work.

\appendix

\section{Properties of the Krylov subspace in exact and floating point arithmetic}\label{sec.appendixAKrylov}
Let $H_m = V_m^\ast A V_m$ and $V_m^\ast V_m =I_{m\times m}$ in exact arithmetic.
For $z\in\NR(H_m)$ (numerical range of $H_m$) there exists $x\in\C^m$ with
\begin{equation}\label{eq.NRHmissubsetAexactar}
z=\frac{x^\ast H_m x }{x^\ast x} = \frac{x^\ast V_m^\ast A V_m x }{x^\ast V_m^\ast V_m x} = \frac{y^\ast  A y }{y^\ast y},~~~\text{for $y=V_m x$},
\end{equation}
whence $\NR(H_m)\subseteq \NR(A)$.

Similar results hold in floating point arithmetic with relative machine precision~$\varepsilon$
and certain additional assumptions.
Assume there exists an orthonormal basis $\widehat{V}_m\in\C^{n\times m}$
and a perturbation $\widetilde{U}_m\in\C^{n\times m}$,
which is sufficiently small in norm
(i.e., there exists a moderate constant $C_3$ with $\|\widetilde{U}_m\|_2 \leq C_3 \varepsilon$), with
\begin{equation}\label{eq.Hmfloatinga}
H_m = \widehat{V}_m^\ast A \widehat{V}_m + \widetilde{U}_m.
\end{equation}
With assumption~\eqref{eq.Hmfloatinga} and basic properties of the numerical range we obtain
\begin{equation}\label{eq.Hmnrange1}
\NR(H_m) \subseteq \NR(\widehat{V}_m^\ast A \widehat{V}_m) + \NR(\widetilde{U}_m).
\end{equation}
Similar to~\eqref{eq.NRHmissubsetAexactar} we obtain
\begin{equation}\label{eq.Hmnrange2}
\NR(\widehat{V}_m^\ast A \widehat{V}_m) \subseteq \NR(A).
\end{equation}
Then we combine~\eqref{eq.Hmnrange1} and~\eqref{eq.Hmnrange2} and make use of $\|\widetilde{U}_m\|_2 \leq C_3 \varepsilon$ to obtain
$$
\NR(H_m) \subseteq U_{C_3 \varepsilon}(\NR(A)),
$$
with $U_{C_3\varepsilon}(\NR(A))$ being the neighborhood of $\NR(A)$ with a distance $C_3\varepsilon$.

In~\cite[Theorem 5]{Si84} the existence of the representation~\eqref{eq.Hmfloatinga}
is proven for the Lanczos method with a sufficiently small constant $C_3$
and the assumption that the Krylov basis is semiorthogonal.

For the general case of the Arnoldi method the representation~\eqref{eq.Hmfloatinga}
can be derived using~\eqref{eq.Krylovidcomputer},~\eqref{eq.Krylovroundoffconst1}
and an additional condition on the level of orthogonality of the Krylov basis,
e.g., assuming that an orthonormal basis $\widehat{V}_m$ exists for which
$\|\widehat{V}_m - V_m\|_2$ is small enough
(see also~\cite[Theorem 2.1]{BLR00} and references therein).


%
%
%
\section{Some properties of divided differences}\label{sec.propertiesofdd}
%
%
%
\begin{proof}[of \,{\bf Proposition~\ref{thm.intphiA}}]
For $p \in \N_0$ and any $A\in\C^{m\times m}$, $w\in\C^m$,
from the series representation~\eqref{eq.defvarphip} we obtain
\begin{equation}\label{eq.intphiptoppp1}
\int_0^t s^p \varphi_p(sA)w \,\dd s
= \int_0^t\Big( \sum_{k=0}^{\infty} \frac{s^{k+p} A^k w}{(k+p)!}\Big) \,\dd s
= \sum_{k=0}^{\infty} \frac{t^{k+p+1} A^k w}{(k+p+1)!}
= t^{p+1} \varphi_{p+1}(tA)w.
\end{equation}
This identity carries over to divided differences in the following way. Let
\begin{equation*}
\Theta_m =
\begin{pmatrix}
\lambda_1&  &&\\
1 & \lambda_2 &&\\
  & \ddots &\ddots& \\
 && 1&\lambda_m
\end{pmatrix}\in\C^{m\times m}.
\end{equation*}
As a consequence of the Opitz formula, see~\cite{Op64} and remarks in~\cite[Proposition 25]{Bo05}, we have
\begin{equation}\label{eq.opitzddtomatfct}
(\varphi_p)_t[\lambda_1,\ldots,\lambda_m] =
e_m^\ast \varphi_p(t \Theta_m)e_1.
\end{equation}
Using~\eqref{eq.intphiptoppp1} and~\eqref{eq.opitzddtomatfct} we obtain
\begin{equation*}
\int_0^t s^p (\varphi_p)_s[\lambda_1,\ldots,\lambda_m] \,\dd s
= e_m^\ast \int_0^t s^p \varphi_p(s\Theta_m) e_1\,\dd s
= e_m^\ast  t^{p+1} \varphi_{p+1}(t\Theta_m)e_1
= t^{p+1} (\varphi_{p+1})_t[\lambda_1,\ldots,\lambda_m],
\end{equation*}
which completes the proof.
\qed
\end{proof}

%
%
%

\begin{remark}
We will make use of the following integral representation for divided differences,
the so-called Hermite-Genocchi formula,~\cite[eq. (B.25)]{Hi08}.
With the differential operator $(D^{(m-1)}f_t)(\lambda)  = \frac{\dd^{m-1}}{\dd \lambda^{m-1}}f(t\lambda)$,
\begin{equation} \label{eq.ddhermitegenocchi}
\begin{aligned}
{f_t}[\lambda_1,\ldots,\lambda_m]
&= \int_{[\lambda_1,\ldots,\lambda_m]}D^{(m-1)}f_t \\
& = \int_0^1
    \int_0^{s_1} \cdots
    \int_0^{s_{m-2}}
    D^{(m-1)}f \Big(\lambda_1 + \sum_{j=1}^{m-1}s_j(\lambda_{j+1}-\lambda_{j}) \Big)
    \dd s_{m-1}\,\ldots\,\dd s_{2}\,\dd s_1.
\end{aligned}
\end{equation}
\end{remark}

\begin{proof}[of \,{\bf Proposition~\ref{thm.realnodesposphiandupperbound}}]
Applying~\eqref{eq.ddhermitegenocchi} to the exponential function gives
\begin{align*}
|\exp_t[\lambda_1,\ldots,\lambda_k]|
& \leq \int_0^1
    \int_0^{s_1} \cdots
    \int_0^{s_{k-2}}\,
    t^{k-1} \Big|\exp\Big(\lambda_1 + \sum_{j=1}^{k-1}s_j(\lambda_{j+1}-\lambda_{j}) \Big)\Big|
    \,\dd s_{k-1}\,\ldots\,\dd s_{2}\,\dd s_1\\
& = \int_0^1
    \int_0^{s_1} \cdots
    \int_0^{s_{k-2}}\,
    t^{k-1} \exp\Big(\xi_1 + \sum_{j=1}^{k-1}s_j(\xi_{j+1}-\xi_{j}) \Big)
    \dd s_{k-1}\,\ldots\,\dd s_{2}\,\dd s_1\\
& = \exp_t[\xi_1,\ldots,\xi_k],
\end{align*}
which completes the proof.
\qed
\end{proof}

\begin{proof}[of \,{\bf Proposition~\ref{thm.ddlamlowerbound}}]
We use~\eqref{eq.ddhermitegenocchi} to obtain
\begin{align*}
\exp_t[\lambda_1,\ldots,\lambda_k]
& = \int_0^1
    \int_0^{s_1} \cdots
    \int_0^{s_{k-2}}\,
    t^{k-1} \exp\Big(t\Big(\lambda_1 + \sum_{j=1}^{k-1}s_j(\lambda_{j+1}-\lambda_{j}) \Big)\Big)
    \dd s_{k-1}\,\ldots\,\dd s_{2}\,\dd s_1\\
& = \int_0^1
    \int_0^{s_1} \cdots
    \int_0^{s_{k-2}}\,
    t^{k-1} \\
   &~~~~~~\cdot\Big[\cos\Big(t\Big(\eta_1 + \sum_{j=1}^{k-1}s_j(\eta_{j+1}-\eta_{j}) \Big)\Big)
+\ii\sin\Big(t\Big(\eta_1 + \sum_{j=1}^{k-1}s_j(\eta_{j+1}-\eta_{j}) \Big)\Big)\Big]\\
   &~~~~~~ \cdot\,\exp\Big(t\Big(\xi_1 + \sum_{j=1}^{k-1}s_j(\xi_{j+1}-\xi_{j}) \Big)\Big)
    \;\dd s_{k-1}\,\ldots\,\dd s_{2}\,\dd s_1\\
&=(\cos(tx)+\ii\sin(ty))\exp_t[\xi_1,\ldots,\xi_k]
\quad \text{for certain $ x,y \in \text{Conv}(\{\eta_1,\ldots,\eta_k\}). $}
\end{align*}
Here, in the last step we have used the Mean Value Theorem for the integral.
In this way we end up with the estimate
$$
|\exp_t[\lambda_1,\ldots,\lambda_m]| = |\cos(tx)+\ii\sin(ty)|\cdot\exp_t[\xi_1,\ldots,\xi_m].
$$
With $|tx|, |ty| \leq \widetilde{\eta}_t  < \pi/2 $ we obtain
$$
\cos(\widetilde{\eta}_t) \leq \cos(tx) \leq |\cos(tx)+\ii\sin(ty)|,
$$
which completes the proof.
\qed
\end{proof}

\section{A new asymptotic expansion of divided differences}\label{sec.expansionofrho}

Our goal is to derive an asymptotic expansion for $ |\exp_t[\lambda_1,\ldots,\lambda_m]| $,
see Theorem~\ref{thm.rho12asymptotic} at the end of this section.

Let $\lambda_1,\ldots,\lambda_m \in\C$.
We use the shortcut $\kappa_k$ for the divided differences of power functions,
\begin{equation}\label{eq.defkappa}
\kappa_k=(\cdot)^{m-1+k}[\lambda_1,\ldots,\lambda_m]~~~\text{for}~~k\in\N_0,
\end{equation}
where $(\cdot)^{j}: z\mapsto z^{j}$ for $j\in\N_0$. Note that
\begin{equation*}
(\cdot)^{j}[\lambda_1,\ldots,\lambda_m] = 0 ~~~\text{for}~~j=0,\ldots,m-2.
\end{equation*}
With the notation~\eqref{eq.defkappa} and the series representation of the exponential function we obtain
\begin{subequations}
\begin{align}
\exp_t[\lambda_1,\ldots,\lambda_m]
=& \sum_{j=0}^{\infty} \frac{t^j\,(\cdot)^{j}[\lambda_1,\ldots,\lambda_m]}{j!}
= t^{m-1} \sum_{k=0}^{\infty} \frac{t^k \kappa_k }{(m-1+k)!}\label{eq.expddseries}\\
=& \frac{ t^{m-1}}{(m-1)!} + \Ono(t^m)~~~\text{for~~$t\to 0$.}\label{eq.expddseriestt0}
\end{align}
\end{subequations}
We also introduce the notation
\begin{equation}\label{eq.sl}
S_l = \sum_{j=1}^m \lambda_j^l,~~~l\in\N.
\end{equation}
For $\kappa_0$, $\kappa_1$ and $\kappa_2$ we obtain the following formula.
\begin{proposition}\label{propo:emHke1}
For $\kappa_k$ introduced in~\eqref{eq.defkappa} we have
$$
\kappa_0 = 1,~~~\kappa_1 = S_1,~~~ \kappa_2 = (S_1^2+S_2)/2.
$$
\end{proposition}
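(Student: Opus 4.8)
The plan is to evaluate each $\kappa_k$ as a symmetric function of the nodes $\lambda_1,\ldots,\lambda_m$ by identifying the divided difference of a monomial with a complete homogeneous symmetric polynomial, and then to rewrite the degree-two case in terms of the power sums $S_1$ and $S_2$ from~\eqref{eq.sl}.

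First I would invoke the Opitz representation of divided differences of an analytic function $f$ through the lower bidiagonal matrix $\Theta_m$ introduced in the proof of Proposition~\ref{thm.intphiA}, namely $f[\lambda_1,\ldots,\lambda_m] = e_m^\ast f(\Theta_m) e_1$ (eq.~\eqref{eq.opitzddtomatfct} with $f$ in place of $\varphi_p$ and $t=1$; this is the general form of the Opitz formula, see~\cite{Op64}). Applying it to $f=(\cdot)^{m-1+k}$ yields $\kappa_k = e_m^\ast \Theta_m^{\,m-1+k} e_1$, i.e.\ the $(m,1)$ entry of a power of $\Theta_m$. Since $\Theta_m$ carries $\lambda_j$ on its diagonal and $1$ on its first subdiagonal, this entry is the sum over all nondecreasing index paths from $1$ to $m$ of length $m-1+k$, each weighted by the product of the visited diagonal entries. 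Every such path uses exactly $m-1$ downward steps and $k$ stationary steps, and a stationary step at level $\ell$ contributes the factor $\lambda_\ell$; summing over all paths therefore produces the complete homogeneous symmetric polynomial $h_k(\lambda_1,\ldots,\lambda_m)=\sum_{1\le j_1\le\cdots\le j_k\le m}\lambda_{j_1}\cdots\lambda_{j_k}$. (Alternatively this identity $\kappa_k=h_k$ can be established by induction on $m$ from the recursive definition of divided differences.)

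With $\kappa_k=h_k$ in hand, the cases $k=0$ and $k=1$ are immediate: $h_0=1$ and $h_1=\sum_{j=1}^m\lambda_j=S_1$. For $k=2$ I would write $h_2 = \sum_{j=1}^m \lambda_j^2 + \sum_{1\le i<j\le m}\lambda_i\lambda_j$ and combine it with the elementary expansion $S_1^2 = \big(\sum_j\lambda_j\big)^2 = \sum_j\lambda_j^2 + 2\sum_{i<j}\lambda_i\lambda_j = S_2 + 2\sum_{i<j}\lambda_i\lambda_j$. Solving for the off-diagonal sum gives $\sum_{i<j}\lambda_i\lambda_j=(S_1^2-S_2)/2$, whence $\kappa_2 = h_2 = S_2 + (S_1^2-S_2)/2 = (S_1^2+S_2)/2$, as asserted.

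The only substantive step is the symmetric-function identity $\kappa_k=h_k$; everything after it is routine algebra (the $k=2$ conversion being just the first nontrivial Newton identity expressing $h_2$ through power sums). For the three specific orders needed here one could even bypass the general identity and compute $e_m^\ast\Theta_m^{\,m-1+k}e_1$ directly for $k=0,1,2$, but I expect the lattice-path argument to be the cleanest way to make the pattern transparent, and it is the step I would spell out most carefully.
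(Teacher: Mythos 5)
Your proposal is correct. The paper's own proof is a one-line citation: it invokes \cite[eq.\ (27)]{Bo05}, which is precisely the identity $(\cdot)^{m-1+k}[\lambda_1,\ldots,\lambda_m]=h_k(\lambda_1,\ldots,\lambda_m)$ (divided differences of monomials are complete homogeneous symmetric polynomials), and leaves the conversion of $h_0,h_1,h_2$ into the power sums $S_1,S_2$ implicit. You instead prove the cited identity from scratch: you apply the Opitz formula~\eqref{eq.opitzddtomatfct} to $f=(\cdot)^{m-1+k}$, read off $\kappa_k$ as the $(m,1)$ entry of $\Theta_m^{\,m-1+k}$, and count lattice paths with $m-1$ downward steps and $k$ stationary steps, each stationary step at level $\ell$ contributing $\lambda_\ell$; this correctly yields $\sum_{1\le j_1\le\cdots\le j_k\le m}\lambda_{j_1}\cdots\lambda_{j_k}=h_k$. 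The final step $h_2=S_2+(S_1^2-S_2)/2=(S_1^2+S_2)/2$ is the first Newton-type identity and is exactly the algebra the paper suppresses. What your route buys is a self-contained argument that stays entirely within machinery already present in the paper (the matrix $\Theta_m$ from the proof of Proposition~\ref{thm.intphiA}) and makes the general pattern $\kappa_k=h_k$ transparent for all $k$, at the cost of a combinatorial digression that the citation avoids.
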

\begin{proof}
This follows from~\cite[eq. (27)]{Bo05}.
\qed
\end{proof}

To simplify the notation we write
$$
f(t) = |\exp_t[\lambda_1,\ldots,\lambda_m]|.
$$
The following asymptotic expansion of $f(t)$ for $ t \to 0 $ is motivated by the concept of effective order.
The effective order of the function $ f(t) $ can be understood as the slope of the double-logarithmic function
\begin{equation*}
\ln(f(\ee^{\tau}))~~~\text{with}~~\tau=\ln t,
\quad \text{and with derivative}~~\frac{f'(\ee^{\tau})\,\ee^{\tau}}{f(\ee^{\tau})}.
\end{equation*}
We denote the effective order by
\begin{subequations}
\begin{align}
\rho(t) &= \frac{f'(t)\,t}{f(t)}, \label{def:effectiveorder} \\
\text{satisfying} \quad \rho(t)/t&=\big(\log(f(t))\big)'. \label{eq.rhoovertlogftd}
\end{align}
\end{subequations}

We now analyze the divided differences close to an asymptotic regime
under the assumption $f(t)>0$, which holds for sufficiently small $t>0$.
The effective order $\rho(t)$ is then well-defined by~\eqref{def:effectiveorder}.
The following expansion~\eqref{eq.rhoansatz} for $\rho(t)$
is be considered in an asymptotic sense for $ t \to 0 $;
convergence of the series is not an issue here.

We make the ansatz
\begin{equation}\label{eq.rhoansatz}
\rho(t)=\sum_{k=0}^\infty \rho_k t^k
\end{equation}
Using~\eqref{eq.rhoansatz} in~\eqref{eq.rhoovertlogftd} we obtain
\begin{align*}
\frac{\rho(t)}{t} =\Big( \rho_0 \log(t) + \sum_{k=1}^\infty \rho_k t^{k}/k\Big)' &= (\log(f(t)))'\\
c\,\exp\Big(\rho_0 \log(t) + \sum_{k=1}^\infty \rho_k t^{k}/k \Big) &= f(t),\\
c\,t^{\rho_0} \exp\Big(\sum_{k=1}^\infty \rho_k t^{k}/k \Big) &= f(t).
\end{align*}
From~\eqref{eq.expddseriestt0} we see that $c=1/(m-1)!$ and $ \rho_0=m-1$, whence
\begin{equation}\label{eq.rhoasymansatznoconst}
\rho(t)=m-1+\sum_{k=1}^\infty \rho_k t^k,
\end{equation}
and for sufficiently small $t$,
\begin{equation}\label{eq.defectnormbyrho}
f(t) = |\exp_t[\lambda_1,\ldots,\lambda_m]| = \frac{t^{m-1}}{(m-1)!}\exp\Big(\sum_{k=1}^\infty \rho_k t^{k}/k \Big).
\end{equation}

We aim for deriving a formula for the coefficients $\rho_k$.
To avoid the square roots we choose $q(t) = f(t)^2 $, such that $ f'(t) = q'(t)/(2q(t)^{1/2})$.
Due to~\eqref{def:effectiveorder} the effective order $\rho(t)$ satisfies
\begin{equation}\label{eq.rhoimplicit}
q(t) \rho(t)=q'(t)t/2.
\end{equation}
We proceed by rewriting $q(t)$ and $q'(t)$ to obtain a formulation for $\rho_k$ ($k\geq 1$) via~\eqref{eq.rhoimplicit}.
From~\eqref{eq.expddseries},
$$
q(t) = |\exp_t[\lambda_1,\ldots,\lambda_m]|^2
= t^{2(m-1)} \Big(\sum_{k=0}^\infty\frac{t^k\kappa_k}{(m-1+k)!}\Big)
             \Big(\sum_{\ell=0}^\infty\frac{t^\ell{\overline{\kappa}}_\ell}{(m-1+\ell)!}\Big).
$$
The representation of $ q(t) $ as well as $ t q'(t)/2 $ as a Cauchy product can be written in the form
\begin{equation}\label{eq.qqprime}
q(t) = \frac{t^{2(m-1)}}{((m-1)!)^2} \sum_{k=0}^\infty \alpha_k t^k,~~~\text{and}~~~
tq'(t)/2 = \frac{t^{2(m-1)}}{((m-1)!)^2} \sum_{k=0}^\infty \big((m-1)+k/2\big) \alpha_k t^k,
\end{equation}
with coefficients $\alpha_k$ given by
$$
\alpha_0=1,~~~\text{and}~~~\alpha_k
= \sum_{j=0}^k  \frac{((m-1)!)^2\,\kappa_j{\overline{\kappa}}_{k-j} }{(m-1+j)!\,(m-1+k-j)!}
~~~\text{for~\,$k\in\N$}.
$$
With $\kappa_0=1$ (see Proposition~\ref{propo:emHke1}) this can be written as
\begin{equation}\label{eq.defalpha}
\alpha_k = \frac{2 (m-1)!\,\real(\kappa_k)}{(m-1+k)!}
+ \sum_{j=1}^{k-1}  \frac{((m-1)!)^2\,\kappa_j{\overline{\kappa}}_{k-j}}{(m-1+j)!\,(m-1+k-j)!}
~~~\text{for~\,$k\in\N$}.
\end{equation}
Furthermore, from~\eqref{eq.rhoasymansatznoconst} and~\eqref{eq.qqprime}
we obtain a representation of $ q(t) \rho(t) $ in form of a Cauchy product,
\begin{equation}\label{eq.qrho}
q(t) \rho(t) = \frac{t^{2(m-1)}}{((m-1)!)^2} \sum_{k=0}^\infty \theta_k t^k,
~~~\text{with}~~\theta_k = \sum_{j=0}^{k-1} \alpha_j \rho_{k-j}+ (m-1)\alpha_k ,~~~k\in\N_0.
\end{equation}
We remark that~\eqref{eq.qrho} only holds for $ t $ small enough.
With $\alpha_0=1$, in~\eqref{eq.qrho} we have
\begin{equation}\label{eq.rhotheta2}
\theta_0= m-1,~~~\text{and}~~~
\theta_k = \rho_k + \sum_{j=1}^{k-1} \alpha_j \rho_{k-j} + (m-1)\alpha_k ,~~~k\in\N.
\end{equation}
For the implicit equation~\eqref{eq.rhoimplicit} we combine~\eqref{eq.qqprime} and~\eqref{eq.qrho} to obtain
\begin{equation}\label{eq.rhoimplseries}
\sum_{k=0}^\infty \theta_k t^k = \sum_{k=0}^\infty (m-1+k/2) \alpha_k t^k.
\end{equation}
Comparing coefficients of $t^k$ in~\eqref{eq.rhoimplseries} and using~\eqref{eq.rhotheta2} we conclude
\begin{equation}\label{eq.recursiveformularhok}
\theta_k = (m-1+k/2) \alpha_k,~~~\text{and}
~~~\rho_k = \frac{k\alpha_k }{2} - \sum_{l=1}^{k} \alpha_l \rho_{k-l},~~~k \geq 1.
\end{equation}
From~\eqref{eq.recursiveformularhok} we obtain a recursion for the coefficients $\rho_k$
in the expansion~\eqref{eq.rhoasymansatznoconst}
which can be resolved using~\eqref{eq.defkappa} and~\eqref{eq.defalpha}.

We now evaluate the lower coefficients of $\rho(t)$.
For $\alpha_1$ and $\alpha_2$, using Proposition~\ref{propo:emHke1} in~\eqref{eq.defalpha} gives
\begin{equation}\label{eq.alphaS1S2}
\alpha_1 
= \frac{2\,\real(\kappa_1)}{m} =\frac{2\,\real(S_1)}{m},
~~~\text{and}~~~\alpha_2 
= \frac{|\kappa_1|^2}{m^2} + \frac{2\,\real(\kappa_2) }{m(m+1)}
=\frac{|S_1|^2}{m^2} + \frac{\real(S_1^2+S_2)}{m(m+1)},
\end{equation}
with $ S_1 $, $ S_2 $ according to definition~\eqref{eq.sl}
From the recursion in~\eqref{eq.recursiveformularhok} we have
\begin{equation}\label{def.rho12}
\rho_1 = \frac{\alpha_1}{2},~~~\rho_2= \frac{1}{2}\big( 2 \alpha_2 - \alpha_1^2 \big),
\end{equation}
and combining~\eqref{eq.alphaS1S2} with~\eqref{def.rho12} we eventually obtain
\begin{equation}\label{eq.rho12}
\begin{aligned}
\rho_1 &= \frac{\real(S_1)}{m}, \\
\rho_2 &= \frac{|S_1|^2}{m^2} + \frac{\real(S_1^2+S_2) }{m(m+1)} - \frac{2\,\real(S_1)^2}{m^2}
 = \frac{\imag(S_1)^2- \real(S_1)^2}{m^2}  + \frac{\real(S_1^2+S_2) }{m(m+1)}. 
\end{aligned}
\end{equation}
To study the influence of the real and imaginary parts
of the nodes $\lambda_j=\xi_j + \ii \eta_j$ we introduce the notation
\begin{equation}\label{eq.defslk}
S_{lk} = \sum_{j=1}^m\xi_j^l \eta_j^k,~~~l,k\in\N_0.
\end{equation}
Basic computations, mostly binomial sums in~\eqref{eq.sl}, show
$$
S_1 = S_{10} + \ii S_{01},~~~S_2 = S_{20} + 2\ii S_{11}  - S_{02},~~~\text{and}~~S_1^2 = S_{10}^2 + \ii S_{10}S_{01} - S_{01}^2,
$$
and
\begin{equation}\label{def.RealImagS1S2S12x}
\imag(S_1)=S_{01},~~~\real(S_1)=S_{10},~~~\real(S_2)=S_{20}-S_{02},~~~\text{and}~~\real(S_1^2)=S_{10}^2-S_{01}^2.
\end{equation}
Combining~\eqref{eq.rho12} with~\eqref{def.RealImagS1S2S12x} gives
\begin{equation}\label{eq.rho12withSlk}
\rho_1 =\frac{S_{10}}{m},~~~\text{and}~~
\rho_2 = \frac{ S_{01}^2 - S_{10}^2}{m^2(m+1)} + \frac{S_{20}-S_{02}}{m(m+1)}.
\end{equation}

After all these technicalities we arrive at the following asymptotic expansion.
\begin{theorem}\label{thm.rho12asymptotic}
Assume that for $\lambda_j=\xi_j + \ii \eta_j$ at least
one of the sequences $\{\xi_j\}_{j=1}^m$ and $\{\eta_j\}_{j=1}^m$ is not constant,
and $\xi_j\leq 0$ for $j=1,\ldots,m$.
Let $ \avg(\xi) = \sum_{j= 1}^m \xi_j/m$ be the average and
$ \var(\xi) = \sum_{j=1}^m (\xi_j-\avg(\xi))^2/m $
be the variance
of $\{\xi_1,\ldots,\xi_m\}$, and $ \var(\eta) $ the variance of $\{\eta_1,\ldots,\eta_m\}$.
Then,
\begin{enumerate}[(a)]
\item
$$
|\exp_t[\lambda_1,\ldots,\lambda_m]| = \frac{t^{m-1}}{(m-1)!}\exp\big(\rho_1 t + \rho_2t^2/2  + \Ono(t^3) \big)
~~~\text{for~\,$t\to 0$},
$$
with
$$
\rho_1 = \avg(\xi),~~~ \rho_2 = \frac{\var(\xi) - \var(\eta)}{m+1},
$$
and either $\rho_1\neq 0$ or $\rho_2\neq 0$.
\item
The derivative of the effective order $\rho(t)$ (see~\eqref{def:effectiveorder}) satisfies
$\rho'(t) = \rho_1 + \rho_2t + \Ono(t^2)$ for \,$t\to 0$, and
$$
\rho'(0+) < 0.
$$
\end{enumerate}
\end{theorem}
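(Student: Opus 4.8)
The plan is to read off part~(a) directly from the coefficient formulas already derived in this appendix and then to obtain part~(b) by differentiating the resulting expansion; the genuinely delicate points are the translation into averages/variances and the sign discussion in the degenerate case. The main work of computing $\rho_1$ and $\rho_2$ has already been done in \eqref{eq.rho12withSlk}, so what remains is mostly bookkeeping and the dichotomy.

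First I would truncate the exponent in \eqref{eq.defectnormbyrho}, writing $\sum_{k\geq 1}\rho_k t^k/k = \rho_1 t + \rho_2 t^2/2 + \Ono(t^3)$ for $t\to 0$; this reproduces the stated form of $|\exp_t[\lambda_1,\dots,\lambda_m]|$. To identify the coefficients I would use \eqref{eq.rho12withSlk}: for $\rho_1$ the claim $\rho_1 = S_{10}/m = \avg(\xi)$ is immediate, and for $\rho_2$ I would expand the variances as raw moments, $\var(\xi)=S_{20}/m-(S_{10}/m)^2$ and $\var(\eta)=S_{02}/m-(S_{01}/m)^2$, and check that $(\var(\xi)-\var(\eta))/(m+1)$ matches \eqref{eq.rho12withSlk} term by term. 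This is a one-line algebraic verification.

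The substantive part of~(a) is the dichotomy, and here the hypothesis $\xi_j\leq 0$ is what drives the argument. I would assume $\rho_1=0$; then $\avg(\xi)=0$, and since every $\xi_j\leq 0$ a vanishing average forces $\xi_j=0$ for all $j$, so $\{\xi_j\}$ is constant and $\var(\xi)=0$. By hypothesis at least one sequence is non-constant, so $\{\eta_j\}$ must be, giving $\var(\eta)>0$ and hence $\rho_2=-\var(\eta)/(m+1)<0$. Thus $\rho_1$ and $\rho_2$ cannot both vanish.

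For part~(b) I would differentiate $\rho(t)=m-1+\rho_1 t+\rho_2 t^2+\Ono(t^3)$ from \eqref{eq.rhoasymansatznoconst}, obtaining $\rho'(t)=\rho_1+\Ono(t)$ and therefore $\rho'(0+)=\rho_1=\avg(\xi)\leq 0$. I expect the borderline case $\avg(\xi)=0$ to be the main obstacle: there $\rho'(0+)=0$ literally, so the strict negativity must be read as strict monotone decrease of $\rho$ for small $t>0$. That decrease is supplied by the second-order term, since in this case $\rho(t)=m-1+\rho_2 t^2+\Ono(t^3)$ with $\rho_2<0$ from the dichotomy. Thus $\rho$ is strictly decreasing as $t\to 0+$ in all cases, with $\rho'(0+)<0$ whenever $\avg(\xi)<0$; this is precisely the monotonicity of the effective order used to justify \eqref{defintbounds}.
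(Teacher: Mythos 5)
Your proposal is correct and follows essentially the same route as the paper: read off $\rho_1,\rho_2$ from \eqref{eq.rho12withSlk}, rewrite them via raw moments as $\avg(\xi)$ and $(\var(\xi)-\var(\eta))/(m+1)$, use $\xi_j\le 0$ to force $\xi\equiv 0$ when $\rho_1=0$ and hence $\rho_2=-\var(\eta)/(m+1)<0$, and differentiate the expansion of $\rho(t)$ for part (b). Your explicit treatment of the borderline case $\avg(\xi)=0$ in (b) -- reading $\rho'(0+)<0$ as strict decrease of $\rho$ for small $t>0$, supplied by $\rho_2<0$ -- is in fact a more careful articulation of what the paper's terse proof of (b) intends.
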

\begin{proof}
We use the expansion~\eqref{eq.defectnormbyrho} for sufficiently small $t$.
For the variance we obtain
$$
\var(\xi) = \frac{1}{m} \sum_{j=1}^m (\xi_j-\avg(\xi))^2 = \frac{1}{m} \Big(\sum_{j=1}^m \xi_j^2 - \frac{1}{m}\big(\sum_{j=1}^m \xi_j\big)^2\Big).
$$
The first coefficients $\rho_1$ and $\rho_2$ are given in~\eqref{eq.rho12withSlk}.
With the notation from~\eqref{eq.defslk} we observe
$\avg(\xi) = S_{10}/m $ (for the average $\avg(\xi)$) and $\var(\xi) = (S_{20}-S_{10}^2/m)/m $,
$\var(\eta) = (S_{02}-S_{01}^2/m)/m $ (for the variance $\var(\xi)$ and $\var(\eta)$, respectively),
whence
$$
\rho_1 = \avg(\xi),~~~\text{and}~~ \rho_2 = \frac{\var(\xi) - \var(\eta)}{m+1}.
$$
With $\xi_1,\ldots,\xi_m\leq 0$ for $j=1,\ldots,m$ we obtain $\rho_1 \leq 0$
and $\rho_1 = 0$ iff $\xi_1,\ldots,\xi_m = 0$.
For the case $\xi_1,\ldots,\xi_m = 0$ we obtain $\var(\xi) = 0 $ and
$$
\rho_2 = - \frac{\var(\eta)}{m+1} \leq 0.
$$
Here, $\rho_2=0$ only in the trivial case with $\xi_1,\ldots,\xi_m = 0$ and a constant sequence $\eta_1,\ldots,\eta_m$.
This proves~(a).
For the proof of~(b) we take the derivative of $\rho(t)$
in an asymptotic sense and make use of $\rho_1\leq 0$ and $\rho_2<0$ iff $\rho_1=0$,
see~(a).
\qed
\end{proof}

\begin{acknowledgements}
This work was supported by the Doctoral College TU-D,
Technische Universit{\"a}t Wien.
\end{acknowledgements}


\end{document}